\def\arxiv#1{\href{http://arxiv.org/abs/#1}{\texttt{arXiv:#1}}}
\DeclareMathAlphabet\mathbfit{OML}{cmm}{b}{it}
\newlist{enumarabic}{enumerate}{1}
\setlist[enumarabic]{font=\normalfont,label=(\arabic*),leftmargin=0.3in}
\newlist{enumroman}{enumerate}{1}
\setlist[enumroman]{font=\normalfont,label=(\roman*),leftmargin=0.3in}
\numberwithin{equation}{section}
\theoremstyle{plain}
\newtheorem{theorem}{Theorem}[section]
\newtheorem{proposition}[theorem]{Proposition}
\newtheorem{lemma}[theorem]{Lemma}
\newtheorem{corollary}[theorem]{Corollary}
\theoremstyle{definition}
\newtheorem{remark}[theorem]{Remark}
\theoremstyle{remark}
\newtheorem*{acknowledgements}{Acknowledgements}
\let\newterm\emph
\def\cf{\emph{cf.}}
\let\epsilon\varepsilon
\let\phi\varphi
\let\emptyset\varnothing
\def\smallotimes{\mathbin{\mkern-1mu\vcenter{\hbox{$\scriptstyle\otimes$}}\mkern-1mu}}
\def\Z{\mathbb Z}
\def\at#1{\left.#1\right|}
\DeclareMathOperator{\Hom}{Hom}
\DeclareMathOperator{\Tor}{Tor}
\def\Ainf{A_{\infty}}
\DeclareMathOperator{\BB}{\mathbf{B}}
\def\kk{\Bbbk}
\def\BBone{\mathbf{1}}
\def\tBBone{\overset{\text{$\leftrightarrow$}}{\BBone}}
\DeclareMathOperator{\length}{length}
\def\deg#1{|#1|}
\def\susp{\mathbf{s}}
\def\desusp{\susp^{-1}}
\def\aa{\mathbfit{a}}
\def\laa{\overset{\text{$\leftarrow$}}{\aa}}
\def\raa{\overset{\text{$\rightarrow$}}{\aa}}
\def\taa{\overset{\text{$\leftrightarrow$}}{\aa}}
\def\bb{\mathbfit{b}}
\def\rbb{\overset{\text{$\rightarrow$}}{\bb}}
\def\tbb{\overset{\text{$\leftrightarrow$}}{\bb}}
\def\cc{\mathbfit{c}}
\def\lDelta{\overset{\text{$\leftarrow$}}{\Delta}}
\def\rDelta{\overset{\text{$\rightarrow$}}{\Delta}}
\def\tDelta{\overset{\text{$\leftrightarrow$}}{\Delta}}
\def\EE{\mathbf{E}}
\def\ldd{\lambda_{t}}
\def\swee#1#2{#1^{(#2)}}
\def\AA{\tilde{A}}
\let\KS\varkappa
\def\eqKS{\stackrel{\KS}{=}}
\let\shuffle\nabla
\def\PhiGM{\Phi^{\mathrm{GM}}}
\def\Phihga{\Phi^{\mathrm{hga}}}
\def\Psihgc{\Psi^{\mathrm{hgc}}}
\DeclareMathOperator{\Cobar}{\mathbf{\Omega}}
\def\CobarEl#1{\langle#1\rangle}
\def\ff{f_{1}}
\def\jj{\mathbfit{j}}
\def\sweehgc#1#2{#1^{(\!(#2)\!)}}
\def\AW{AW}
\def\AWu#1{\AW_{\mkern -1mu #1}}
\def\restr#1#2#3{#1[#2,#3]}
\def\selfmapright{\mathbin{\rotatebox{90}{$\circlearrowleft$}}}
\def\gE{\tilde{E}}
\def\gp{\tilde{p}}
\begin{document}

\title[An \texorpdfstring{\(\Ainf\)}{A∞}-version of the Eilenberg--Moore theorem]{An \boldmath{\(\Ainf\)}-version of the Eilenberg--Moore theorem}
\author{Matthias Franz}
\thanks{The author was supported by an NSERC Discovery Grant.}
\address{Department of Mathematics, University of Western Ontario,
  London, Ont.\ N6A\;5B7, Canada}
\email{mfranz@uwo.ca}

\subjclass[2020]{Primary 55R20; secondary 16E45, 55T20}

\begin{abstract}
  We construct an \(\Ainf\)-structure on the two-sided bar construction
  involving homotopy Gerstenhaber algebras (hgas).
  It extends the non-associative product defined by Carlson and the author
  and generalizes the dga structure on the one-sided bar construction due to Kadeishvili--Saneblidze.
  As a consequence, the multiplicative cohomology isomorphism
  from the Eilenberg--Moore theorem is promoted to a quasi-iso\-mor\-phism of \(\Ainf\)-algebras.

  We also show that the resulting product on the differential torsion product involving cochain algebras
  agrees with the one defined by Eilenberg--Moore and Smith, for all triples of spaces.
  This is a consequence of the following result, which is of independent interest:
  The strongly homotopy commutative (shc) structure on cochains inductively constructed by Gugenheim--Munkholm
  agrees with the one previously defined by the author for all hgas.
\end{abstract}

\maketitle

\section{Introduction}

The Eilenberg--Moore theorem is an important result in the theory of fibre bundles and fibrations.
Let \(p\colon E\to B\) be a Serre fibration, \(g\colon X\to B\) a map and \(\gE=g^{*}E\) the pull-back of~\(E\) along~\(g\),
\begin{equation}
  \label{eq:intro:pull-back}
  \begin{tikzcd}
    \gE \arrow{d}[left]{\gp} \arrow{r}{\tilde{g}} & E \arrow{d}{p} \\
    X \arrow{r}{g} & B \mathrlap{.}
  \end{tikzcd}
\end{equation}
Under suitable assumptions on the spaces involved and on the coefficient ring~\(\kk\)
(to be recalled in \Cref{thm:em-ainf}), the Eilenberg--Moore theorem
relates the singular cohomology~\(H^{*}(\gE)\) to the cohomologies of~\(B\),~\(E\) and~\(X\),
all with coefficients in~\(\kk\).
This is done via a spectral sequence converging to~\(H^{*}(\gE)\) with second page
\begin{equation}
  \label{eq:intro:Tor-E2}
  E_{2} = \Tor_{H^{*}(B)}\bigl(H^{*}(X),H^{*}(E)\bigr).
\end{equation}
The spectral sequence is one of algebras and converges to~\(H^{*}(\gE)\) as an algebra.

In fact, out of the underlying normalized singular cochain complexes, one can construct a certain complex
whose cohomology
\begin{equation}
  \label{eq:intro:Tor}
  \Tor_{C^{*}(B)}\bigl(C^{*}(X),C^{*}(E)\bigr),
\end{equation}
 called a differential torsion product, is naturally isomorphic to~\(H^{*}(\gE)\).
This underlying complex involves a suitable resolution of, say, \(C^{*}(E)\) over~\(C^{*}(B)\),
in the same way as the torsion product~\eqref{eq:intro:Tor-E2} can be constructed out of a resolution
of~\(H^{*}(E)\) over~\(H^{*}(B)\). In both cases the module structures are given by the map~\(p^{*}\).

The fact that cohomology algebras are graded commutative implies that \eqref{eq:intro:Tor-E2} has
a canonical algebra structure.
If the two-sided bar construction~\(\BB\bigl(H^{*}(X),H^{*}(B),H^{*}(E)\bigr)\) is used to compute \eqref{eq:intro:Tor-E2},
then the product is the componentwise product of~\(H^{*}(X)\) and~\(H^{*}(E)\) together with the shuffle product
in~\(\BB H^{*}(B)\).

This breaks down on the cochain level because there the cup product is not graded commutative anymore.
In particular, while the two-sided bar construction
\begin{equation}
  \label{eq:intro:bar-twosided}
  \BB\bigl(C^{*}(X),C^{*}(B),C^{*}(E)\bigr),
\end{equation}
is a canonical choice for the complex underlying the differential torsion product~\eqref{eq:intro:Tor},
a multiplication based on the shuffle product would not be a chain map.
The classical way to address this is to define a product on the differential torsion product~\eqref{eq:intro:Tor}
via (the transpose of) the shuffle map, without a cochain-level multiplication on the bar construction~\eqref{eq:intro:bar-twosided}.
This is possible because the shuffle map is a morphism of differential graded coalgebras (dgcs)
and also a homotopy inverse to the Alexander--Whitney map
underpinning the dga structure of singular cochains.
Still, a product on~\eqref{eq:intro:bar-twosided} is desirable in some applications.

Assume that \(X\) is a point, so that the pull-back~\(\gE\) is the fibre~\(F\) of the fibration.
Since we use normalized cochains, we have \(C^{*}(X)=\kk\), and the two-sided bar construction becomes one-sided,
\begin{equation}
  \label{eq:intro:bar-right}
  \BB\bigl(\kk,C^{*}(B),C^{*}(E)\bigr).
\end{equation}
In this situation, Kadeishvili--Saneblidze have defined a product on~\eqref{eq:intro:bar-right}
that turns the bar construction into a differential graded algebra (dga) and the canonical map
\begin{equation}
  \BB\bigl(\kk,C^{*}(B),C^{*}(E)\bigr) \to C^{*}(F),
  \qquad
  [b_{1}|\dots|b_{k}]e \mapsto \begin{cases}
    \at{e}_{F} & \text{if \(k=0\),} \\
    0 & \text{otherwise}
  \end{cases}
\end{equation}
into a quasi-isomorphism of dgas. This is done by adding correction terms to the shuffle product.
These correction terms involve certain natural operations
\begin{equation}
  E_{k}\colon C^{*}(X) \otimes C^{*}(X)^{\otimes k} \to C^{*}(X)
\end{equation}
for the normalized cochains of any space~\(X\). The operation~\(E_{1}\) is essentially the cup-\(1\)-product;
the others are certain multilinear variants of it. Together they turn \(C^{*}(X)\)
into what is called a homotopy Gerstenhaber algebra (hga), which is a specific kind of \(E_{2}\)-algebra.
An hga structure on a dga~\(A\) defines an associative multiplication on the bar construction~\(\BB A=\BB(\kk,A,\kk)\),
and the Kadeishvili--Saneblidze product on~\eqref{eq:intro:bar-right} is an extension of it.
See \Cref{sec:hga} for precise definitions.

In order to describe the cup product in the cohomology of a general pull-back, Carlson and the author~\cite{CarlsonFranz}
have defined a product on the two-sided bar construction~\eqref{eq:intro:bar-twosided},
again based on the hga structure of singular cochains. While not turning \eqref{eq:intro:bar-twosided}
into a dga, this product is a chain map and the canonical map
\begin{align}
  \label{eq:intro:def-f1}
  \BB\bigl(C^{*}(X),C^{*}(B),C^{*}(E)\bigr) & \to C^{*}(\gE), \\
  \notag 
  x[b_{1}|\dots|b_{k}]e &\mapsto \begin{cases}
    \gp^{*}(x)\,\tilde{g}^{*}(e) & \text{if \(k=0\),} \\
    0 & \text{otherwise}
  \end{cases}
\end{align}
multiplicative up to some homotopy.
This is enough to conclude that the map
\begin{equation}
  \Tor_{C^{*}(B)}\bigl(C^{*}(X),C^{*}(E)\bigr) \to H^{*}(\gE)
\end{equation}
induced in cohomology is multiplicative.

That the product on~\eqref{eq:intro:bar-twosided} is not associative anymore
and the map~\eqref{eq:intro:def-f1} not multiplicative seems unavoidable.
It is therefore natural to consider the next best thing beyond dgas, which are \(\Ainf\)-algebras.
Recall that an \(\Ainf\)-algebra is a cochain complex~\(A\) together with maps
\begin{equation}
  m_{n}\colon A^{\otimes n}\to A
\end{equation}
of degree~\(2-n\) for~\(n\ge2\) such that the ``product''~\(m_{2}\) is a chain map, associative up to the homotopy~\(m_{3}\).
The higher maps are homotopies between the different ways to compose the previous maps; see \Cref{sec:ainf} for a precise definition.
Any dga is canonically an \(\Ainf\)-algebra by defining \(m_{2}\) to be the product and all other~\(m_{n}\) to be \(0\).

Similarly, a morphism of \(\Ainf\)-algebras~\(f\colon A\Rightarrow B\) is a family of maps
\begin{equation}
  f_{n}\colon A^{\otimes n}\to B
\end{equation}
of degree~\(1-n\) for~\(n\ge1\) such that \(f_{1}\colon A\to B\) is a chain map, multiplicative
up to the homotopy~\(f_{2}\). The higher maps are homotopies between the different ways
to combine the lower-order maps with the structure maps of the two \(\Ainf\)-algebras.
The morphism~\(f\) is a quasi-isomorphism if \(f_{1}\) is so.

In this paper we prove the following:

\begin{theorem}
  \label{thm:main-1}
  The Carlson--Franz product extends to an \(\Ainf\)-structure on the two-sided bar construction~\(\BB\bigl(C^{*}(X),C^{*}(B),C^{*}(E)\bigr)\),
  and the map~\eqref{eq:intro:def-f1} to a morphism of \(\Ainf\)-algebras
  \begin{equation}
    f\colon \BB\bigl(C^{*}(X),C^{*}(B),C^{*}(E)\bigr) \Rightarrow C^{*}(\gE)
  \end{equation}
  (where the target is a dga).
  Hence \(f\) is a quasi-isomorphism of \(\Ainf\)-algebras
  whenever the map~\eqref{eq:intro:def-f1} is a quasi-isomorphism of complexes.
\end{theorem}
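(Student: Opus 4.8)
My plan is to organize everything through the standard coalgebraic dictionary for $\Ainf$-structures. Writing $W=\BB\bigl(C^{*}(X),C^{*}(B),C^{*}(E)\bigr)$ and $N=C^{*}(\gE)$, an $\Ainf$-structure on $W$ is the same datum as a square-zero coderivation $D$ of degree $1$ on the cofree conilpotent coalgebra $\mathrm{T}^{c}(\susp W)$, while an $\Ainf$-morphism $W\Rightarrow N$ is the same as a morphism of differential graded coalgebras $\mathrm{T}^{c}(\susp W)\to\mathrm{T}^{c}(\susp N)$ (the target carrying the bar differential coming from the dga structure of $N$). The given data fix the low-order parts: the internal differential and the Carlson--Franz product determine $D$ in arities $\le2$, and the map \eqref{eq:intro:def-f1} is the linear component $f_{1}$. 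The task is thus to extend these to coderivations and coalgebra maps of all arities.

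I would split the construction into an algebraic half and a geometric half, mirroring the abstract. For the $\Ainf$-structure on $W$ itself I would give explicit formulas for the higher products $m_{n}$ in terms of the hga operations $E_{k}$ on $C^{*}(B)$ together with the products on $C^{*}(X)$ and $C^{*}(E)$ and the module actions $g^{*}$, $p^{*}$. This construction is purely formal: it makes sense for any hga $B$ equipped with a left and a right module-algebra, so singular cochains are merely a special case. Two consistency checks pin the formulas down and serve as sanity tests: setting $C^{*}(X)=\kk$ (the one-sided case) must recover the Kadeishvili--Saneblidze dga, in particular forcing $m_{n}=0$ for $n\ge3$ there, and truncating to arity $2$ must recover the Carlson--Franz product. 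The heart of this half is verifying the full tower of $\Ainf$-relations $\sum\pm\,m_{i}\bigl(\id^{\otimes a}\otimes m_{j}\otimes\id^{\otimes b}\bigr)=0$, which I expect to follow from a (delicate) combinatorial manipulation of the hga axioms relating the $E_{k}$.

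For the $\Ainf$-morphism $f$ I would proceed by acyclic models, since the higher homotopies $f_{n}\colon W^{\otimes n}\to N$ necessarily involve the geometry of the pull-back through $\gp^{*}$ and $\tilde{g}^{*}$ and cannot be purely algebraic. The construction is inductive: assuming $f_{1},\dots,f_{n-1}$ have been built so that the $\Ainf$-morphism relations hold in arities $<n$, the failure in arity $n$ is a natural cocycle $\Theta_{n}\in\Hom\bigl(W^{\otimes n},N\bigr)$, and one must exhibit $f_{n}$ with $\partial f_{n}=\Theta_{n}$. Because $W^{\otimes n}$ and $N$ are assembled out of representable singular-cochain functors in the three variables $X$, $B$, $E$, the relevant $\Hom$-complexes are acyclic on a suitable category of models (triples of products of standard simplices), so $\Theta_{n}$, being a natural cocycle that vanishes on models in the pertinent degree, is a natural coboundary; this yields the desired $f_{n}$.

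The main obstacle I anticipate is the combinatorial bookkeeping: writing down the correction terms for the $m_{n}$ from the $E_{k}$ with correct signs, and checking that the hga axioms collapse the $\Ainf$-relations to zero, is where all the real work lies. The two-sided, three-variable setting makes this substantially more involved than the one-sided Kadeishvili--Saneblidze computation, since the hga operations must be threaded simultaneously through the left module $C^{*}(X)$, the bar factor $\BB C^{*}(B)$, and the right module $C^{*}(E)$. Once the $\Ainf$-structure and the morphism $f$ are in place, the final assertion is immediate: an $\Ainf$-morphism is by definition a quasi-isomorphism precisely when its linear component $f_{1}$ is, so whenever \eqref{eq:intro:def-f1} is a quasi-isomorphism of complexes---which is exactly the content of the classical Eilenberg--Moore theorem under the hypotheses recalled in \Cref{thm:em-ainf}---the map $f$ is a quasi-isomorphism of $\Ainf$-algebras.
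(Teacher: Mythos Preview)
Your plan for the $\Ainf$-structure~$(m_{n})$ on the two-sided bar construction is in line with the paper: the paper writes down explicit formulas for the~$m_{n}$ in terms of the hga twisting cochain~$\EE$ and verifies the $\Ainf$-relations by a direct (though intricate) computation, valid for any triple of hgas. Your intended consistency checks (specializing to the one-sided case and to arity~$2$) are exactly the right sanity tests.

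The point where your proposal diverges from the paper, and where it contains a misconception, is the construction of the morphism~$f$. You assert that the higher components~$f_{n}$ ``necessarily involve the geometry of the pull-back\dots\ and cannot be purely algebraic'', and therefore propose to build them by acyclic models. This is not so: in the paper the~$f_{n}$ are given by explicit, purely algebraic formulas in terms of the hga operations, natural in triples of hgas. The mechanism is this. One first observes that the canonical map~\eqref{eq:intro:def-f1} factors as the strict map
\[
  \BB\bigl(C^{*}(X),C^{*}(B),C^{*}(E)\bigr)\longrightarrow \BB\bigl(C^{*}(\gE),C^{*}(\gE),C^{*}(\gE)\bigr)
\]
followed by the augmentation~$\ff\colon\BB(A,A,A)\to A$ for~$A=C^{*}(\gE)$. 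So it suffices to extend~$\ff$ to an $\Ainf$-morphism on~$\BB(A,A,A'')$ for arbitrary hgas. The paper does this by introducing a single ``contracting'' map
\[
  h(\taa,\tbb)\eqKS a'\otimes\raa\cdot S(\tbb),
\]
iterating it to obtain~$h_{n}\colon\BB(A,A,A'')^{\otimes n}\to\BB(A,A,A'')$, and then setting~$f_{n}=\ff\circ h_{n}$. A short computation (\Cref{thm:d-h} in the paper) shows that the family~$(h_{n})$ simultaneously controls both the $\Ainf$-relations for the~$m_{n}$ and the morphism relations for the~$f_{n}$; applying~$\ff$ to that identity gives \Cref{thm:augm-ainf}. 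No acyclic-models step, and no use of the specific geometry of~$\gE$, is needed.

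Beyond the misconception, your acyclic-models route would face genuine obstacles. The target~$C^{*}(\gE)$ is a functor of the entire pull-back square, not just of the triple~$(X,B,E)$, so the ``models'' must include the pull-back; but pull-backs of standard simplices are not simplices, and representability for singular \emph{cochains} (as opposed to chains) does not behave well with respect to the infinite direct sums in the bar construction. Even if these issues were resolved, the resulting~$f_{n}$ would depend on choices and would not be natural in maps of triples, which is precisely the defect the paper singles out (see the remark following the statement in the introduction and \Cref{rem:transfer}).
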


See \Cref{thm:ainf-bar-twosided} and~\Cref{thm:augm-ainf}.
This confirms a conjecture expressed in~\cite[Rem.~A.26]{CarlsonFranz}.
\Cref{thm:main-1} holds in fact for general hgas, and the maps~\(m_{n}\)
and~\(f_{n}\) are given explicitly in terms of the hga operations. In particular, they are natural
with respect to maps of triples of hgas.

That the quasi-isomorphism~\eqref{eq:intro:def-f1} can be extended to one of \(\Ainf\)-algebras
follows from general results about the transfer of \(\Ainf\)-structures, at least for field coefficients.
See \Cref{rem:transfer} for details.
However, in such an approach the resulting structure would depend on certain choices;
one can therefore not expect it to be natural.
Moreover, it would be unclear whether an \(\Ainf\)-structure exists for hgas in general,
without a quasi-isomorphism like~\eqref{eq:intro:def-f1}.
Even in the context of the Eilenberg--Moore theorem,
expressing the \(\Ainf\)-structure maps in terms of hga operations is useful because
some spaces (notably the classifying space of a torus) are known to be homotopy Gerstenhaber formal
in the sense that there is a quasi-iso\-mor\-phism of dgas from cochains to cohomology
annihilating all hga operations, \cf~\cite[Thm.~9.6]{Franz:homog},~\cite[Thm.~1.3]{Franz:gersten}.

Whenever the conclusion of the Eilenberg--Moore theorem holds, both the product defined by Carlson--Franz
and the one originally considered by Eilenberg--Moore~\cite{EilenbergMoore:1966} and Smith~\cite{Smith:1967}
correspond to the cup product in~\(C^{*}(\gE)\) and therefore agree.
We give a direct proof of this fact that does not appeal to the Eilenberg--Moore theorem
and holds without any assumptions on the spaces involved.

\begin{theorem}
  \label{thm:main-2}
  Let \(B\),~\(X\) and~\(E\) be spaces or simplicial sets.
  The Carlson--Franz product on~\(\Tor_{C^{*}(B)}\bigl(C^{*}(X),C^{*}(E)\bigr)\)
  (in other words, the one induced by the \(\Ainf\)-structure from \Cref{thm:main-1})
  agrees with the one previously defined by Eilenberg--Moore and Smith.
\end{theorem}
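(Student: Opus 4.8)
The plan is to reduce \Cref{thm:main-2} to a comparison of two strongly homotopy commutative (shc) structures on the cochain algebra $A:=C^{*}(B)$, and to establish that comparison by induction. Recall that an shc structure on a dga $A$ is encoded by a morphism of dg-coalgebras $\Phi\colon\BB A\otimes\BB A\to\BB A$ restricting to the product of $A$ and to the identity along the two units; equivalently, since a coalgebra map into $\BB A$ is determined by its projection to the cogenerators, it is encoded by the associated map $\BB A\otimes\BB A\to A$. Given such a $\Phi$, together with the cup products on $C^{*}(X)$ and $C^{*}(E)$, one obtains a product on the two-sided bar construction $\BB\bigl(C^{*}(X),A,C^{*}(E)\bigr)$ and hence on $\Tor_{A}\bigl(C^{*}(X),C^{*}(E)\bigr)$. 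By the construction underlying \Cref{thm:main-1}, the Carlson--Franz product is the one determined by the shc structure $\Phihga$ coming from the hga operations. The product of Eilenberg--Moore and Smith, on the other hand, is the one obtained from the shuffle map $\shuffle$, which by the work of Gugenheim--Munkholm is the product associated with their inductively defined shc structure $\PhiGM$. Since the cup products on $C^{*}(X)$ and $C^{*}(E)$ enter both constructions in the same way, it suffices to prove the identity $\PhiGM=\Phihga$ of maps $\BB A\otimes\BB A\to\BB A$.

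To prove this identity I would first use that a morphism of coalgebras into $\BB A$ is determined by its cogenerator projection, reducing $\PhiGM=\Phihga$ to the equality of the two projections $\BB A\otimes\BB A\to A$. The projection of $\Phihga$ is transparent: along the two units it is the identity, its component $A\otimes A^{\otimes k}\to A$ is the hga operation $E_{k}$, and all components $A^{\otimes p}\otimes A^{\otimes q}\to A$ with $p\ge2$ vanish. It then remains to show that the projection of $\PhiGM$ has exactly these components. For this I would unwind the Gugenheim--Munkholm recursion, which builds $\PhiGM$ as a perturbation of $\shuffle$ by the homotopies measuring the non-commutativity of the cup product, and argue by induction on the total bar length $p+q$. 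At each stage one rewrites the relevant homotopy in terms of the hga (surjection-operad) operations, so that the recursion collapses to the closed expression furnished by the $E_{k}$; matching the cogenerator projections in every bidegree then upgrades to the full equality $\PhiGM=\Phihga$.

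The main obstacle is precisely this last verification. The Gugenheim--Munkholm structure is defined implicitly and recursively, whereas $\Phihga$ is given in closed form, so reconciling the two requires identifying the higher Gugenheim--Munkholm homotopies with explicit combinations of hga operations while controlling the signs and the shuffle combinatorics. I expect the crux to be an inductive identity showing that the hga operations satisfy the very recursion characterizing the Gugenheim--Munkholm data, the delicate points being the vanishing of all components with $p\ge2$ and the exact matching of the $(1,k)$-component with $E_{k}$. Once $\PhiGM=\Phihga$ is established, the two shc structures induce the same product on $\Tor_{A}\bigl(C^{*}(X),C^{*}(E)\bigr)$ for every triple of spaces or simplicial sets, which is the assertion of \Cref{thm:main-2}.
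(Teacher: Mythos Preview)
Your high-level strategy---reduce \Cref{thm:main-2} to the identity $\PhiGM=\Phihga$ and then deduce the equality of products on $\Tor$---is exactly the paper's route (see \Cref{thm:comparison-products}). But there is a genuine confusion in how you encode an shc structure, and it makes the proposed verification of $\PhiGM=\Phihga$ unworkable as written.

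An shc structure on~$A$ is an $\Ainf$-map $A\otimes A\Rightarrow A$, equivalently a dgc morphism $\BB(A\otimes A)\to\BB A$; its cogenerator projection has components $\Phi_{n}\colon(A\otimes A)^{\otimes n}\to A$ indexed by a \emph{single} integer~$n$. What you describe instead---components $A^{\otimes p}\otimes A^{\otimes q}\to A$ with the $(1,k)$-piece equal to~$E_{k}$ and all $(p,q)$-pieces with~$p\ge2$ vanishing---is the twisting cochain~$\EE\colon\BB A\otimes\BB A\to A$ of the hga \emph{product} on~$\BB A$, not the shc map~$\Phihga$. These are related by $\BB\Phihga\circ\shuffle=\mu_{\BB A}$, but the shuffle map is not injective, so equality of the two bar products would not give $\PhiGM=\Phihga$; and it is the latter that is needed, since the Carlson--Franz product is $\Tor_{\Phihga}(\Phihga,\Phihga)\circ\shuffle$ with the shc map in all three slots (\Cref{rem:m2-hga}). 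The actual components~$\Phihga_{n}$ are the sums~\eqref{eq:def-Phihga} over decompositions~$\jj$ of products of $E_{j_{i}}$'s, and it is these that must be matched against the Gugenheim--Munkholm recursion. Your proposed induction on~$p+q$ is set up to prove the wrong statement.

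The paper's proof of $\PhiGM=\Phihga$ (\Cref{thm:Phihga-PhiGM}) also differs in that it passes to chains: one proves the dual identity $G_{X,Y}=(p_{X}\otimes p_{Y})\circ\Psihgc_{X\times Y}$ of $\Ainf$-\emph{coalgebra} maps on~$C(X\times Y)$, by induction on~$n$. The input is not a generic ``homotopy measuring non-commutativity'' but the explicit Eilenberg--Mac\,Lane homotopy~\eqref{eq:def-h}; the inductive step rests on a combinatorial analysis (\Cref{thm:GkGl}) of how this homotopy interacts with the interval-cut operations~$\AWu{u(\jj)}$ defining~$\Psihgc$. Without that explicit simplicial computation the recursion has nothing concrete to bite on.
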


See \Cref{thm:comparison-products}. This result has simultaneously been obtained by Carlson~\cite{Carlson:product}.
It is a consequence of a result about
strongly homotopy commutative (shc) algebra structures on singular cochains.

Recall that an shc structure on a dga~\(A\) is an extension of the multiplication map \(A\otimes A\to A\)
to an \(\Ainf\)-morphism~\(A\otimes A\Rightarrow A\).
Gugenheim--Munkholm~\cite{GugenheimMunkholm:1974} constructed a natural shc structure on~\(C^{*}(X)\)
based on the Eilenberg--Zilber maps. On the other hand, we have shown in~\cite{Franz:hgashc} that
any hga has a canonical shc structure, given explicitly in terms of the hga operations.
In this paper we prove the following, see \Cref{thm:Phihga-PhiGM}.

\begin{theorem}
  \label{thm:main-3}
  Let \(X\) be a space or a simplicial set.
  The shc structure on~\(C^{*}(X)\) constructed by Gugenheim--Munkholm coincides
  with the one coming from the hga structure of~\(C^{*}(X)\).
\end{theorem}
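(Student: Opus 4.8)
The plan is to exhibit the explicit hga structure $\Phihga$ as a solution of the very recursion that defines the Gugenheim--Munkholm structure $\PhiGM$, and then to invoke the rigidity of that recursion. Both shc structures are natural in the space or simplicial set, have the cup product as their linear component $\Phi_{1}=\mu$, and are built out of the simplicial Eilenberg--Zilber data. First I would pass to the underlying simplicial setting and, using naturality together with acyclic models, reduce to comparing the two structures in a single universal model, where every natural operation is pinned down by its value on the fundamental cochains; this is what will let us upgrade a comparison up to homotopy into an equality on the nose.

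Next I would put both constructions into a shape suited to comparison. The maps $\PhiGM_{n}$ are defined recursively: granting $\PhiGM_{1},\dots,\PhiGM_{n-1}$, the $\Ainf$-morphism equation forces $d\,\PhiGM_{n}\pm\PhiGM_{n}\,d$ to equal a fixed obstruction cocycle $R_{n}$ assembled from the lower maps, the product and the two differentials, and $\PhiGM_{n}$ is then the canonical primitive of $R_{n}$ supplied by the contracting homotopy $h$ of the Eilenberg--Zilber equivalence $(\shuffle,\AW,h)$. On the other side, \cite{Franz:hgashc} gives a closed formula for $\Phihga_{n}$ directly in terms of the operations $E_{k}$, and in particular it is already known that $\Phihga$ is an shc structure extending $\mu$. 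The comparison thus reduces to the single assertion that the closed hga expression $\Phihga_{n}$ coincides with the primitive $h\circ R_{n}$ selected by Gugenheim--Munkholm.

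The heart of the argument is the dictionary between the hga operations on $C^{*}(X)$ and the Eilenberg--Zilber homotopy. The operations $E_{k}$ are themselves built from the Alexander--Whitney map and the iterated diagonal, and the homotopy $h$ admits a description through the same simplicial ingredients; the crucial step is to rewrite the iterated application of $h$ occurring in the recursion as a signed sum of hga operations and to recognise this sum as the closed formula for $\Phihga_{n}$. Granting this identity, one concludes $\PhiGM_{n}=\Phihga_{n}$ by induction on $n$: the inductive hypothesis makes the obstruction cocycle $R_{n}$ the same for both structures, and the dictionary identifies its chosen primitive $h\circ R_{n}$ with $\Phihga_{n}$.

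I expect the main obstacle to be exactly this last identification at the combinatorial level---matching the iterated Eilenberg--Zilber homotopy against the sum over the $E_{k}$, with all signs and normalisations in place, so that the two sides agree literally and not merely up to a further homotopy. Keeping the choices in the two inductions rigidly aligned, so that the acyclic-models freedom in the Gugenheim--Munkholm construction is fixed to the canonical hga expression, is what turns the a priori weaker statement that $\PhiGM$ and $\Phihga$ are homotopic shc structures into the asserted equality $\PhiGM=\Phihga$.
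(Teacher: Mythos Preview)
Your inductive strategy---show that the closed-form~$\Phihga_{n}$ satisfies the Gugenheim--Munkholm recursion, hence equals~$\PhiGM_{n}$---is correct and is exactly the route the paper takes. But two features of your proposal diverge from what is actually needed. First, the acyclic-models reduction is a red herring: acyclic models produces uniqueness only up to natural chain homotopy, which is precisely the ambiguity you are trying to eliminate, so it cannot ``upgrade a comparison up to homotopy into an equality on the nose''. The paper makes no such reduction; it works directly with an arbitrary simplex. Second, the paper does not attack the recursion on cochains. It dualises first and proves the chain-level identity~$G_{X,Y}=(p_{X}\otimes p_{Y})\circ\Psihgc_{X\times Y}$ of $\Ainf$-\emph{coalgebra} maps $C(X\times Y)\Rightarrow C(X)\otimes C(Y)$, where the Gugenheim--Munkholm recursion reads $G_{n}=-\sum_{l}(-1)^{n-l}(G_{l}\otimes G_{n-l})\,\Delta\,h$. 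On chains one can argue one simplex at a time using explicit face and degeneracy operators and the interval-cut description of the hgc cooperations; on cochains this is far less tractable.

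The genuine gap is that the ``dictionary'' you invoke is the entire content of the proof and you have not supplied it. Concretely, the paper fixes the specific (``opposite'') Eilenberg--Mac\,Lane homotopy, writes it as a signed sum $h=\sum\pm h_{\alpha,\beta}$ over $(p,q{+}1)$-shuffles, and proves a key lemma: for $k>1$ the term $(\tilde G_{k}\otimes 1)\,\Delta\,h_{\alpha,\beta}$ vanishes; $(\AW\otimes\tilde G_{n-1})\,\Delta\,h_{\alpha,\beta}$ vanishes unless $\beta$ is a single interval; and in the surviving case the result is computed explicitly. This collapses the recursion to a single summand, which is then matched against the interval-cut formula for~$\Psihgc_{n}$ by tracking how the surjection~$u(\jj')$ for a decomposition of~$n-2$ extends to a surjection~$u(\jj)$ for a decomposition of~$n-1$, together with a careful accounting of permutation and positional signs. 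None of this combinatorics is present in your proposal; without it the induction step has no content.
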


While not being part of the original construction,
the Gugenheim--Munkholm shc map can be used to define the product on~~\(\Tor_{C^{*}(B)}\bigl(C^{*}(X),C^{*}(E)\bigr)\).
\Cref{thm:main-2} follows easily from this fact together with \Cref{thm:main-3}.

Since the multiplication on the bar construction~\(\BB C^{*}(X)\) is determined by
the shc map~\(C^{*}(X)\otimes C^{*}(X)\Rightarrow C^{*}(X)\) associated to the hga structure,
we also recover a result of Hess--Parent--Scott--Tonks~\cite{HessEtAl:2006} about the relation
between the Gugenheim--Munkholm shc structure and the multiplication in~\(\BB C^{*}(X)\), see \Cref{thm:mult-bar-GM}.

We note that Carlson~\cite{Carlson:ring} studies ring structures on differential torsion products
for other \(E_{2}\)-algebras beyond hgas as well as commutative ring structures for \(E_{3}\)-algebras.
In the context of the Eilenberg--Moore theorem it is clear that the resulting product on the differential torsion product
is graded commutative since it computes \(H^{*}(\gE)\).
We have not investigated what happens for general hgas. Presumably,
one could get a graded commutative product by assuming certain additional structures,
for example the `extended hga' operations considered in~\cite[Sec.~3.2]{Franz:hgashc}.

The structure of the paper is as follows: After reviewing background material in \Cref{sec:prelim},
we prove \Cref{thm:main-1} in \Cref{sec:ainf-twosided} and spell out the resulting \(\Ainf\)-version of the
Eilenberg--Moore theorem in \Cref{sec:e-m}. In \Cref{sec:prelim-2} we review additional background material
that is needed for proving \Cref{thm:main-3} in \Cref{sec:compare-shc} and \Cref{thm:main-2} in \Cref{sec:compare-prod}.

\begin{acknowledgements}
  Discussions with Jeff Carlson, Dan Christensen and Anibal Medina-Mardones have led to
  \Cref{rem:fresse,rem:m2-hga} and to \Cref{foot:triv-ext}.
\end{acknowledgements}

\section{Preliminaries}
\label{sec:prelim}

\subsection{Generalities}

We work over a commutative ring~\(\kk\) with unit. All tensor products are taken over~\(\kk\).
All complexes are chain complexes (with a differential~\(d\) of degree~\(-1\))
or cochain complexes (with a differential of degree~\(+1\)) of \(\kk\)-modules.
The identity map on a complex~\(C\) is denoted by~\(1_{C}\) and the degree of an element~\(c\) by~\(\deg{c}\).
The differential on the desuspension~\(\desusp C\) of a complex~\(C\) is given by
\begin{equation}
  \label{eq:d-desusp}
  d\,\desusp c=-\desusp\,dc
\end{equation}
for~\(c\in C\).

Let \(A\) and~\(B\) be complexes, and let \(f\colon A\to B\) be a linear map of degree~\(\deg{f}\in\Z\).
The differential of~\(f\) is the linear map
\begin{equation}
  \label{eq:diff-map}
  d(f) = d_{B}\,f - (-1)^{\deg{f}}\,f\,d_{A}.
\end{equation}
of degree~\(\deg{f}\pm 1\).

Let \(A\) be an augmented dga with multiplication~\(\mu_{A}\colon A\otimes A\to A\) augmentation~\(\epsilon_{A}\colon A\to\kk\),
and let \(C\) be a coaugmented coalgebra with comultiplication~\(\Delta_{C}\colon C\to C\otimes C\) and coaugmentation~\(\iota_{C}\colon\kk\to C\).
A twisting cochain is a linear map~\(t\colon C\to A\) of degree~\(\pm 1\) such that
\begin{equation}
  d(t) = t\cup t\coloneqq \mu_{A}\,(t\otimes t)\,\Delta_{C},
  \qquad
  t\,\iota_{C} = 0,
  \qquad
  \epsilon_{A}\,t = 0.
\end{equation}

\subsection{Sign convention}

Most signs in our formulas come from the Koszul sign rule according to which the transposition of two graded objects~\(x\) and~\(y\)
incurs the sign~\((-1)^{\deg{x}\deg{y}}\). (We have seen this already in~\eqref{eq:d-desusp}.)
In order to avoid excessive sign exponents, we adopt the sign convention of~\cite{Franz:hgashc} and~\cite{Franz:homog}:
Whenever we write equalities with the symbol~``\(\eqKS\)'', we omit the signs that come from permuting variables
or from moving maps past variables (from the left). For example, if we write
\begin{equation}
  f(a,b,c) \eqKS a \otimes g(c, h(b)),
\end{equation}
then this stands for
\begin{equation}
  f(a,b,c) = (-1)^{\epsilon}\, a \otimes g(c, h(b))
  \quad\text{with}\quad
  \epsilon = \deg{b} \deg{c} + \deg{g} \deg{a} + \deg{h}(\deg{a}+\deg{c}).
\end{equation}

\goodbreak
\subsection{Bar constructions}

\subsubsection{The usual bar construction}

Let \(A\) be an augmented dga. We write \(\iota\colon\kk\to A\) for the unit map of~\(A\),
\(\epsilon\colon A\to\kk\) for the augmentation
and \(\bar a=a-\iota(\epsilon(a))\) for the component of~\(a\in A\) in the augmentation ideal~\(\bar A=\ker\epsilon\lhd A\).

The (reduced) bar construction of~\(A\) is the dgc
\begin{equation}
  \BB A = \bigoplus_{k\ge0} (\desusp \bar A)^{\otimes k}
\end{equation}
with diagonal
\begin{equation}
  \label{eq:Delta-BA}
  \Delta\,[a_{1}|\dots|a_{k}] = \sum_{i=0}^{k} [a_{1}|\dots|a_{i}] \otimes [a_{i+1}|\dots|a_{k}],
\end{equation}
augmentation
\begin{equation}
  \epsilon([a_{1}|\dots|a_{k}]) = \begin{cases}
    1 & \text{if \(k=0\),} \\
    0 & \text{otherwise,}
  \end{cases}
\end{equation}
coaugmentation
\begin{equation}
  \iota\colon \kk\to \BB A,
  \qquad
  1 \mapsto \BBone \coloneqq [\,]
\end{equation}
and differential
\begin{multline}
  \label{eq:d-BA}
  \qquad
  d\,[a_{1}|\dots|a_{k}] = - \sum_{i=0}^{k} (-1)^{\epsilon_{i}}\,[a_{1}|\dots|da_{i}|\dots|a_{k}] \\
  + \sum_{i=1}^{k-1} (-1)^{\epsilon_{i}}\,[a_{1}|\dots|a_{i}a_{i+1}|\dots|a_{k}]
  \qquad
\end{multline}
where
\begin{equation}
  \epsilon_{i} = \deg{a_{1}} + \dots + \deg{a_{i}} - i.
\end{equation}
(The first line in~\eqref{eq:d-BA} is the tensor product differential of~\((\desusp\bar A)^{\otimes k}\).)

We write elements of~\(\BB A\) also in the form
\begin{equation}
  \aa = [a_{1}|\dots|a_{k}] \,;
\end{equation}
we define the \newterm{length} of such an element to be \(k\).
We use the Sweedler notation without explicit sum signs. The formula~\eqref{eq:Delta-BA} for the diagonal then reads
\begin{equation}
  \Delta\,\aa = \swee{\aa}{1} \otimes \swee{\aa}{2}.
\end{equation}
We can consider the desuspension~\(\desusp a\) of some~\(a\in\bar A\) as an element~\([a]\in \BB A\) of length~\(1\).

There is the canonical twisting cochain
\begin{equation}
  t_{A}\colon \BB A\to A,
  \qquad
  [a_{1}|\dots|a_{k}] \mapsto \begin{cases}
    a_{1} & \text{if \(k=1\),} \\
    0 & \text{otherwise.}
  \end{cases}
\end{equation}

\subsubsection{One- and two-sided bar constructions}

Let \(A'\) and~\(A''\) also be augmented dgas. We assume that we have morphisms~\(A\to A'\) and~\(A\to A''\),
but we suppress these maps from our notation in order to make formulas more readable.

The two-sided bar construction associated to the triple~\((A',A,A'') \) is
\begin{equation}
  \BB(A',A,A'') = A' \otimes \BB A \otimes A'' \,;
\end{equation}
we write elements in the form
\begin{equation}
  \taa = a'\otimes\aa\otimes a'' = a'[a_{1}|\dots|a_{k}]a'' \,;
\end{equation}
the length of~\(\taa\) is the length of~\(\aa\).

Using the notation introduced above including the canonical twisting cochain~\(t=t_{A}\),
the differential on~\(\BB(A',A,A'')\) is
\begin{align}
  d\,\taa
  &= d_{\otimes}\,\taa + (-1)^{\deg{a}}\,a a_{1}[a_{2}|\dots|a_{k}]a'' - (-1)^{\deg{a}+\epsilon_{k-1}}\,a[a_{1}|\dots|a_{k-1}]a_{k} a'' \\
  \notag &\eqKS d_{\otimes}\,\taa
  + a'\,t(\swee{\aa}{1})\otimes\swee{\aa}{2}\otimes a'' - a'\otimes\swee{\aa}{1}\otimes t(\swee{\aa}{2})\,a''
\end{align}
where \(d_{\otimes}\) denotes the tensor product differential on~\(A' \otimes \BB A \otimes A''\). The second and third
term in the last line are only present for~\(k\ge1\). For later use we introduce the abbreviation
\begin{equation}
  \label{eq:def-ldd}
  \ldd\,\taa \eqKS a'\,t(\swee{\aa}{1})\otimes\swee{\aa}{2}\otimes a''
\end{equation}
for the ``left'' twisting term.

Setting \(A'=A''=\kk\) recovers the usual bar construction. Setting only one of them equal to~\(\kk\) gives
the left bar construction~\(\BB(A',A,\kk)\) and the right bar construction~\(\BB(\kk,A,A'')\), respectively.
We write elements in these bar constructions as
\begin{align}
  \laa &= a'\smallotimes\aa = a'[a_{1}|\dots|a_{k}] \in \BB(A',A,\kk), \\
  \raa &= \aa\smallotimes a'' = [a_{1}|\dots|a_{k}]a'' \in \BB(\kk,A,A'').
\end{align}

The diagonal~\eqref{eq:Delta-BA} of the dgc~\(\BB A\) generalizes to the chain map
\begin{gather}
  \label{eq:Delta-AAA}
  \tDelta\colon \BB(A',A,A'') \to \BB(A',A,\kk) \otimes \BB(\kk,A,A''), \\
  \notag \tDelta\,\taa 
  = \sum_{i=0}^{k} a'[a_{1}|\dots|a_{i}]\otimes[a_{i+1}|\dots|a_{k}]a'' =\vcentcolon \swee{\laa}{1}\otimes\swee{\raa}{2}.
\end{gather}
It contains the maps
\begin{align}
  \lDelta\colon \BB(A',A,\kk) &\to \BB(A',A,\kk) \otimes \BB A, \\
  \rDelta\colon \BB(\kk,A,A'') &\to \BB A \otimes \BB(\kk,A,A'')
\end{align}
as special cases. Together they have the ``coassociativity'' property
\begin{equation}
  (\lDelta\otimes 1)\,\tDelta = (1\otimes\rDelta)\,\tDelta\colon
  \BB(A',A,A'') \to \BB(A',A,\kk) \otimes \BB A \otimes \BB(\kk,A,A'').
\end{equation}

The shuffle product
\begin{equation}
  \label{eq:def-shuffle}
  \shuffle\colon \BB A \otimes \BB A \to \BB(A\otimes A),
  \qquad
  \aa\otimes\bb \mapsto \aa \bullet \bb
\end{equation}
(see for instance \cite[p.~247]{McCleary:2001} or~\cite[Ex.~2.1]{Franz:homog} for a definition)
is a morphism of dgcs and extends to the chain map
\begin{align}
  \label{eq:def-shuffle-twosided}
  \shuffle\colon \BB(A',A,A'') \otimes \BB(A',A,A'') &\to \BB(A'\otimes A',A\otimes A,A''\otimes A''), \\
  \notag \shuffle(\taa,\tbb) &\eqKS a'b' \otimes \aa \bullet \bb \otimes a''b''.
\end{align}
We call the latter one again a shuffle map, and we do the same for the corresponding maps in cohomology.

If \(A\) is graded commutative, then the composition of the shuffle map with the map~\(\BB\mu_{A}\)
induced by the multiplication map for~\(A\) turns \(\BB A\) into a dg~bialgebra.
Similarly, if also \(A'\) and~\(A''\) are cdgas, then one gets a dga structure on~\(\BB(A',A,A'')\).
This breaks down if one drops the commutativity assumptions. In \Cref{sec:hga,sec:ainf-twosided} we will see
how homotopy Gerstenhaber structures come to a rescue.

\subsubsection{Differential torsion products}
\label{sec:diff-tor}

The differential torsion product
\begin{equation}
  \label{eq:diff-Tor}
  \Tor_{A}(A',A'')
\end{equation}
is defined in~\cite[Def.~1.1]{GugenheimMay:1974} via so-called Künneth resolutions.
These resolutions include
the proper projective resolutions that are considered in~\cite[Def.~I.1.1]{Smith:1967},
and in~\cite[Def.~7.5]{McCleary:2001} under the assumption that \(A\) and~\(H^{*}(A)\) are flat over~\(\kk\).\footnote{%
  The assumption that \(H^{*}(A)\) be \(\kk\)-flat is missing in~\cite{Smith:1967}. It seems to be required to ensure
  that the cohomology of a proper projective \(A\)-module is again projective.}

Assume that \(\kk\) is a principal ideal domain and that \(A\) and~\(A''\) (or \(A\) and~\(A'\)) are torsion-free over~\(\kk\).
In this case, the cohomology of the two-sided bar construction~\(\BB(A',A,A'')\)
is naturally isomorphic to the differential torsion product~\eqref{eq:diff-Tor},
even if the bar construction itself may not be a Künneth resolution, see~\cite[p.~1148]{BarthelMayRiehl:2014}.
(Naturality follows from inspection of the proof of~\cite[Prop.~10.16]{BarthelMayRiehl:2014}.
Moreover, this isomorphism is compatible with the augmentation maps to~\(H^{*}(A'\otimes_{A}A'')\).

\subsection{Homotopy Gerstenhaber algebras}
\label{sec:hga}

\subsubsection{Definition}

A \newterm{homotopy Gerstenhaber algebra (hga)} is an augmented dga with certain additional operations
\begin{equation}
  E_{k}\colon A\otimes A^{\otimes k} \to A
\end{equation}
for~\(k\ge1\) that define a product
\begin{equation}
  \mu\colon \BB A \otimes \BB A \to \BB A,
  \qquad
  \mu(\aa,\bb) = \aa\cdot\bb,
\end{equation}
turning the bar construction~\(\BB A\) into a dg~bialgebra with unit~\(\BBone\), \cf~\cite[Def.~2, Sec.~3.2]{GerstenhaberVoronov:1995}.
The correspondence between the operations~\(E_{k}\) and the product~\(\mu\) is as follows:
As a morphism of dgcs, \(\mu\) is determined by the associated twisting cochain
\begin{equation}
  \EE = t_{A}\,\mu\colon \BB A\otimes \BB A\to \bar A.
\end{equation}
Being a twisting cochain means that
\begin{equation}
  \label{eq:d-EE}
  d(\EE)(\aa,\bb) \eqKS \EE(\swee{\aa}{1},\swee{\bb}{1})\,\EE(\swee{\aa}{2},\swee{\bb}{2})
\end{equation}
for any~\(\aa\),~\(\bb\in \BB A\).
That \(\BBone\in \BB A\) is a two-sided unit for~\(\mu\) and that \(\mu\) is associative
is encoded by the identities
\begin{gather}
  \label{eq:EE-unit}
  \EE(\aa,\BBone) = \EE(\BBone,\aa) = t_{A}(\aa) \\
  \label{eq:EE-assoc}
  \EE(\aa,\bb\cdot\cc) = \EE(\aa\cdot\bb,\cc)
\end{gather}
for~\(\aa\),~\(\bb\),~\(\cc\in \BB A\).
Among all such twisting cochains~\(\BB A\otimes \BB A\to A\),
the ones coming from an hga structure are exactly those satisfying
\begin{equation}
  \label{eq:EE-length-2}
  \EE(\aa,\bb) = 0 \quad\text{if \(\length\aa\ge2\)}.
\end{equation}
Note that \(\EE(\aa,\bb)\) is determined by~\eqref{eq:EE-unit} if \(\length\aa=0\) or~\(\length\bb=0\).
Finally, for~\(\aa=[a]\) and~\(\bb=[b_{1}|\dots|b_{k}]\) with~\(k\ge1\) we have
\begin{equation}
  \EE(\aa,\bb) = \pm E_{k}(a;b_{1},\dots,b_{k})
\end{equation}
where the sign is a matter of convention. One also defines \(E_{0}(a)=a\) for all~\(a\in A\). For one choice of signs,
the properties of the operations~\(E_{k}\) equivalent to~\eqref{eq:d-EE}--\eqref{eq:EE-assoc}
are spelled out in~\cite[Sec.~3.1]{Franz:hgashc} and~\cite[Sec.~6.1]{Franz:homog}.
In this paper, we will mostly work with the twisting cochain~\(\EE\).

Any graded commutative dga (cdga) is canonically an hga by setting \(E_{k}=0\) for all~\(k\ge1\).
This applies in particular to the cdga~\(\kk\). With this convention, the dga augmentation map~\(\epsilon\colon A\to\kk\)
of an hga becomes a morphism of hgas.

We point out that there is also a different convention where the roles of the first and the second argument of~\(\EE\)
are reversed, leading to operations
\begin{equation}
  \label{eq:tilde-E}
  \tilde{E}_{k}\colon A^{\otimes k}\otimes A\to A.
\end{equation}
See~\cite[Sec.~7]{KadeishviliSaneblidze:2005}, for instance.

Let \(X\) be a pointed topological space or simplicial set. Its normalized cochain complex~\(C^{*}(X)\)
with coefficients in~\(\kk\) has a natural hga structure,
see~\cite[Sec.~2.3]{GerstenhaberVoronov:1995}, \cite[Sec~8.2]{Franz:homog} or \Cref{sec:simplicial}.

\subsubsection{The product on the right bar construction}
\label{sec:prod-right-bar}

Kadeishvili--Saneblidze have defined an associative product on the right bar construction
of \(1\)-reduced hgas \cite[Cor.~6.2~\&~7.2]{KadeishviliSaneblidze:2005}.\footnote{%
  Since Kadeishvili--Saneblidze work with the operations we denoted \(\tilde E_{k}\) above,
  they obtain an associative product on the \emph{left} bar construction. Also, a wrong sign
  in their definition of the product is corrected in~\cite[Rem.~2]{Saneblidze:2009}.}
As remarked in~\cite[Prop.~7.6]{Franz:homog}, the definition works for all hgas.

Let \(A\) and~\(A''\) be hgas, connected via a morphism of hgas~\(A\to A''\) (which we again do not indicate).
The product in~\(\BB(\kk,A,A'')\) defined by Kadeishvili--Saneblidze has the form
\begin{equation}
  \label{eq:def-prod-bar-right}
  \raa\cdot\rbb \eqKS \sum \aa\cdot\swee{\bb}{1} \smallotimes \EE\bigl([\bar a''],\swee{\bb}{2}\bigr)\,b''
  + \aa\cdot\bb\smallotimes \epsilon(a'')\,b''.
\end{equation}
Recall that \([\bar a'']\) denotes the desuspension~\(\desusp\bar a''\) of~\(\bar a''=a''-\iota(\epsilon(a''))\in\bar A''\),
considered as an element of~\(\BB(A'')\).
The above product turns \(\BB(\kk,A,A'')\) into an augmented dga with unit \(\BBone_{\BB A}\smallotimes 1_{A''}\)
and augmentation~\(\epsilon_{\BB A}\smallotimes\epsilon_{A''}\).

\begin{lemma}
  \label{thm:diag-rb}
  For~\(\raa\),~\(\rbb\in \BB(\kk,A,A'')\) we have
  \begin{equation*}
    \rDelta(\raa\cdot\rbb) \eqKS \swee{\aa}{1}\cdot\swee{\bb}{1} \otimes \swee{\raa}{2}\cdot\swee{\rbb}{2} \in \BB A\otimes \BB(\kk,A,A'').
  \end{equation*}
\end{lemma}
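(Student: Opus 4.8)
The plan is to verify the stated identity by unwinding both sides according to the definitions and matching the resulting terms. The key structural fact I would exploit is the ``coassociativity'' relation $(\lDelta\otimes 1)\,\tDelta = (1\otimes\rDelta)\,\tDelta$ stated in the excerpt, together with the fact that $\rDelta$ is a coaction of the dgc~$\BB A$ on the module~$\BB(\kk,A,A'')$, i.e.\ it is comultiplicative with respect to the diagonal~$\Delta$ on~$\BB A$ and to~$\rDelta$ itself. Concretely, $\rDelta$ applied to an element $\raa=\aa\smallotimes a''$ splits the bar word $\aa=[a_1|\dots|a_k]$ at every position and keeps $a''$ on the right factor, so it is the ``same'' combinatorics as the shuffle-coalgebra diagonal but asymmetric in the two tensor slots.

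First I would apply~$\rDelta$ to the defining formula~\eqref{eq:def-prod-bar-right} for $\raa\cdot\rbb$. The product is a sum of two kinds of terms: the main terms $\aa\cdot\swee{\bb}{1}\smallotimes\EE([\bar a''],\swee{\bb}{2})\,b''$ and the degenerate term $\aa\cdot\bb\smallotimes\epsilon(a'')\,b''$. In each, the left factor lives in~$\BB A$ (a product $\aa\cdot\swee{\bb}{1}$ using the hga product~$\mu$) and the right factor lives in $A''$. So after applying~$\rDelta$ I need two compatibilities: that $\rDelta$ interacts correctly with the hga product on the $\BB A$-part, and that it correctly distributes the $\EE$-twisting term and the augmentation onto the two tensor slots. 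For the $\BB A$-part I would use that $\mu\colon\BB A\otimes\BB A\to\BB A$ is a morphism of dgcs, so $\Delta\,(\aa\cdot\bb)\eqKS \swee{\aa}{1}\cdot\swee{\bb}{1}\otimes\swee{\aa}{2}\cdot\swee{\bb}{2}$; this is exactly what produces the cross term $\swee{\aa}{1}\cdot\swee{\bb}{1}$ appearing on the right-hand side of the lemma.

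Next I would expand the right-hand side $\swee{\aa}{1}\cdot\swee{\bb}{1}\otimes\swee{\raa}{2}\cdot\swee{\rbb}{2}$ by substituting the definition of~$\rDelta$ for $\swee{\raa}{2}$ and $\swee{\rbb}{2}$ and then the product formula~\eqref{eq:def-prod-bar-right} again for the product $\swee{\raa}{2}\cdot\swee{\rbb}{2}$ inside the second slot. The goal is to see that the doubly-indexed Sweedler sum on the right matches, term by term, what I obtained by applying~$\rDelta$ to the left. The comultiplicativity of~$\mu$ reorganizes the four-fold coproduct $\swee{\aa}{1}\swee{\aa}{2}$, $\swee{\bb}{1}\swee{\bb}{2}$ into the pattern where one copy of the split of~$\aa$ and~$\bb$ feeds the first tensor slot and the other copy, together with the $\EE$-term depending only on $a''$ and the tail of $\bb$, feeds the second.

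The main obstacle I expect is purely bookkeeping: tracking how the twisting term $\EE([\bar a''],\swee{\bb}{2})$ is split by~$\rDelta$, since $\EE$ vanishes on bar words of length $\ge2$ in its first argument but not its second, so $\rDelta$ acting on the right factor must only ever put the full word $[\bar a'']$ (of length one) into the slot carrying the $\EE$-term, never splitting it. Establishing that the coproduct places $[\bar a'']$ intact on one side is what makes the matching go through; combined with the Koszul signs suppressed by the~$\eqKS$ convention, this is where care is needed. Because everything is stated up to the sign convention~$\eqKS$, I would not track individual signs explicitly but would confirm at the end that the permutation signs agree on both sides, which they must since both arise from the same dgc-morphism property of~$\mu$ and the same coaction property of~$\rDelta$.
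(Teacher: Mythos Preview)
Your approach is correct and essentially matches the paper's: the proof comes down to the definition~\eqref{eq:def-prod-bar-right} together with the bialgebra identity $\Delta(\aa\cdot\bb)\eqKS\swee{\aa}{1}\cdot\swee{\bb}{1}\otimes\swee{\aa}{2}\cdot\swee{\bb}{2}$ in~$\BB A$, which you correctly identify as the key ingredient.

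One small clarification: the ``main obstacle'' you anticipate is not actually there. The expression $\EE([\bar a''],\swee{\bb}{2})\,b''$ is already an element of~$A''$, not of the bar part~$\BB A$; the symbol~$[\bar a'']$ is merely an input to~$\EE$ and does not appear as a factor in the bar word of $\raa\cdot\rbb$. Since $\rDelta$ only splits the $\BB A$-factor and leaves the $A''$-factor intact on the right, there is nothing to track regarding how $\rDelta$ interacts with~$\EE$. The computation is therefore shorter than you expect: apply $\rDelta$ to each summand of~\eqref{eq:def-prod-bar-right}, use the bialgebra property on the $\BB A$-part, then use coassociativity of~$\Delta$ on~$\bb$ to match the result against the expansion of $\swee{\raa}{2}\cdot\swee{\rbb}{2}$.
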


\begin{proof}
  This follows from the definition~\eqref{eq:def-prod-bar-right} and the fact that \(\BB A\) is a bialgebra, so that
  \begin{equation}
    \Delta(\aa\cdot\bb) \eqKS \swee{\aa}{1}\cdot\swee{\bb}{1} \otimes \swee{\aa}{2}\cdot\swee{\bb}{2}
  \end{equation}
  for~\(\aa\),~\(\bb\in \BB A\).
\end{proof}

\subsection{\texorpdfstring{\(\Ainf\)}{A∞}-algebras and morphisms}
\label{sec:ainf}

We follow Keller~\cite[Sec.~3]{Keller:2001}, but we rewrite the formulas in a way
that uses the differential of a map as defined in~\eqref{eq:diff-map}.

An \(\Ainf\)-structure on a cochain complex~\(A\) is a family of map~\(m_{n}\colon A^{\otimes n}\to A\)
of degree~\(2-n\) for~\(n\ge2\) such that
\begin{multline}
  \label{eq:def-ainf-mul}
  d(m_{n})(a_{1},\dots,a_{n}) \eqKS \\*
  - \sum_{l=2}^{n-1}\sum_{i=0}^{n-l} (-1)^{i+l(n-i-l)}\,m_{n-l+1}\bigl(a_{1},\dots,m_{l}(a_{i+1},\dots,a_{i+l}),\dots,a_{n}\bigr)
\end{multline}
for all~\(n\ge2\) and all~\(a_{1}\),~\dots,~\(a_{n}\in A\).
It is strictly unital if there is a cocycle~\(1\in A\) such that \(m_{2}(a,1)=m_{2}(1,a)\) for all~\(a\in A\)
and additionally any~\(m_{n}\) with~\(n\ge3\) vanishes if some argument equals~\(1\).
Any dga is a strictly unital \(\Ainf\)-algebra by setting \(m_{2}\) equal to the product and \(m_{n}=0\) for~\(n\ge3\).

Regarding morphisms, we only need the special case where the target is a dga.
An \(\Ainf\)-map~\(f\colon A\Rightarrow B\) from an \(\Ainf\)-algebra~\(A\) to a dga~\(B\)
is a family of linear maps~\(f_{n}\colon A^{\otimes n}\to B\) of degree~\(1-n\), \(n\ge1\), such that
\begin{multline}
  \label{eq:def-ainf-map}
  d(f_{n})(a_{1},\dots,a_{n}) \eqKS
  \sum_{l=1}^{n-1} (-1)^{l}\,f_{l}(a_{1},\dots,a_{l})\cdot f_{n-l}(a_{l+1},\dots,a_{n}) \\
  + \sum_{l=2}^{n}\sum_{i=0}^{n-l} (-1)^{i+l(n-i-l)}\,f_{n-l+1}\bigl(a_{1},\dots,m_{l}(a_{i+1},\dots,a_{i+l}),\dots,a_{n}\bigr)
\end{multline}
for all~\(n\ge1\) and all~\(a_{1}\),~\dots,~\(a_{n}\in A\).
It is strict if \(f_{n}=0\) for~\(n\ge2\), in which case the above identities reduce to
\begin{align}
  d(f_{1}) &= 0, \\
  f_{1}(m_{2}(a_{1},a_{2})) &= f_{1}(a_{1})\cdot f_{1}(a_{2}), \\
  f_{1}(m_{n}(a_{1},\dots,a_{n})) &= 0
\end{align}
for~\(n\ge3\).
A morphism~\(f\colon A\Rightarrow B\) as before is strictly unital
if~\(f_{1}(1_{A})=1_{B}\) and any~\(f_{n}\) with~\(n>1\) vanishes if some argument equals \(1_{A}\).

Regarding augmentations, we limit ourselves to strict ones.
We say that a strictly unital \(\Ainf\)-algebra~\(A\) is augmented
if there is a strict morphism~\(\epsilon_{A}\colon A\to\kk\) such that \(\epsilon_{A}(1_{A})=1\).
An augmented morphism from an augmented \(\Ainf\)-algebra to an augmented dga~\(B\)
is a strictly unital \(\Ainf\)-morphism~\(f\colon A\Rightarrow B\) such that
\(\epsilon_{B}\,f_{1}=\epsilon_{A}\) and \(\epsilon_{B}\,f_{n}=0\) for~\(n\ge2\).
(This means that the \(\Ainf\)-composition~\(\epsilon_{B}\circ f\) equals \(\epsilon_{A}\), considered as an \(\Ainf\)-map.)

In the case of two augmented dgas~\(A\) and~\(B\),
augmented \(\Ainf\)-morphisms~\(f\colon A\Rightarrow B\)
are in bijection with morphisms of coaugmented dgcs~\(\BB f\colon \BB A\to\BB B\), \cf~\cite[Sec.~3.6]{Keller:2001}.
The composition of an augmented \(\Ainf\)-morphism~\(f\colon A\Rightarrow B\) with an augmented dga map \(g\colon B\to B'\)
is the augmented \(\Ainf\)-morphism~\(g\circ f\colon A\Rightarrow B'\) defined by
\begin{equation}
  (g\circ f)_{n} = g\,f_{n}\colon A^{\otimes n}\to B'.
\end{equation}
It corresponds to the morphism of dgcs~\(\BB g\,\BB f\colon \BB A\to \BB B'\).
Similarly, the composition of~\(f\) with an augmented dga map~\(h\colon A'\to A\)
is the augmented \(\Ainf\)-morphism \(f\circ h\colon A'\Rightarrow B\) defined by
\begin{equation}
  (f\circ h)_{n} = f_{n}\,h^{\otimes n}\colon (A')^{\otimes n}\to B,
\end{equation}
corresponding to the morphism of dgcs~\(\BB f\,\BB h\colon \BB A'\to \BB B.\)

\begin{remark}
  \(\Ainf\)-maps also induce maps between two-sided bar constructions. Assume that
  \begin{equation}
    \begin{tikzcd}
      A' \arrow[Rightarrow]{d}[left]{f'} & A \arrow{l} \arrow[Rightarrow]{d}[left]{f} \arrow{r} & A'' \arrow[Rightarrow]{d}[left]{f''} \\
      B' & B \arrow{l} \arrow{r} & B''
    \end{tikzcd}
  \end{equation}
  is a commutative diagram, where all objects are dgas,
  the horizontal maps dga morphisms and the vertical ones \(\Ainf\)-morphisms. (Everything is assumed to be augmented.)
  Such a diagram induces a chain map
  \begin{equation}
    \label{eq:ainf-twosidedbar}
    \BB(f',f,f'') \colon \BB(A',A,A'') \to \BB(B',B,B'')
  \end{equation}
  in a functorial way. Whenever the two-sided bar constructions compute the differential torsion products,
  we likewise get a map
  \begin{equation}
    \Tor_{f}(f',f'')\colon \Tor_{A}(A',A'') \to \Tor_{B}(B',B''),
  \end{equation}
  in cohomology, see \cite[Thm.~3.5\(_{*}\)]{GugenheimMunkholm:1974},~\cite[Prop.~1.26, Notation~1.33]{Carlson:biquot},
  or~\cite[Thm.~7]{Wolf:1977} for special cases.
\end{remark}

\section{The \texorpdfstring{\(\Ainf\)}{A∞}-structure on the two-sided bar construction}
\label{sec:ainf-twosided}

\subsection{Definition of maps}

Let \(A\),~\(A'\) and~\(A''\) be hgas with hga morphisms~\(A\to A'\) and~\(A\to A''\). As before, we do not indicate these maps.

For~\(A=A'\) there are a ``right shift'' map
\begin{equation}
  S\colon \BB(A,A,A'') \to \BB(\kk,A,A''),
  \quad
  a[a_{1}|\dots|a_{k}]a'' \mapsto [\bar a|a_{1}|\dots|a_{k}]a''
\end{equation}
of degree~\(-1\) and an ``augmentation map''
\begin{equation}
  \label{eq:def-f}
  \ff\colon \BB(A,A,A'') \to A'',
  \qquad
  \taa \mapsto a'\,\epsilon(\aa)\,a''.
\end{equation}
The latter is a chain map. Note that for~\(A''=\kk\) we get elements~\(S(\laa)\in \BB A\)
and \(\ff(\laa)=\epsilon(a')\,\epsilon(\aa)\in\kk\).

Our starting point for the \(\Ainf\)-structure on the two-sided bar construction
is the dga structure in the right bar construction~\(\BB(\kk,A,A'')\) recalled in~\Cref{sec:prod-right-bar}.
Based on it and on the map
\begin{gather}
  h\colon \BB(A,A,A'')\otimes \BB(A,A,A'') \to \BB(A,A,A''), \\
  \notag h(\taa,\tbb) \eqKS a'\otimes \raa\cdot S(\tbb),
\end{gather}
we define the family of maps
\begin{equation}
  h_{n}\colon \BB(A,A,A'')^{\otimes n} \to \BB(A,A,A'')
\end{equation}
of degree~\(1-n\) for~\(n\ge1\) by
\begin{align}
  h_{1}(\taa_{1}) &= \taa_{1}, \\
  h_{2}(\taa_{1},\taa_{2}) &= h(\taa_{1},\taa_{2}), \\
  h_{n}(\taa_{1},\dots,\taa_{n}) &\eqKS h_{n-1}\bigl(\taa_{1},\dots,\taa_{n-2},h(\taa_{n-1},\taa_{n})\bigr).
\end{align}

Combining \(h\) with the map~\(\ff\) introduced in~\eqref{eq:def-f},
we additionally define the maps
\begin{equation}
  \label{eq:def-fn}
  f_{n}\colon \BB(A,A,A'')^{\otimes n}\mapsto A'',
  \qquad
  f_{n}(\taa_{1},\dots,\taa_{n}) = \ff(h_{n}(\taa_{1},\dots,\taa_{n}))
\end{equation}
of degree~\(1-n\) for~\(n\ge2\).
Then 
\begin{equation}
  f_{n}(\taa_{1},\dots,\taa_{n})\eqKS f_{n-1}(\taa_{1},\dots,h(\taa_{n-1},\taa_{n}))
\end{equation}
for~\(n\ge2\). Note that we also have \(\ff(\taa_{1})=\ff(h_{1}(\taa_{1}))\) in analogy with~\eqref{eq:def-fn}.

Note that both~\(h_{n}\) and~\(f_{n}\) are morphisms of right \(A''\)-modules if \(A''\) acts from the right
on the last tensor factor of~\(\BB(A,A,A'')^{\otimes n}\).

We finally define maps
\begin{equation}
  m_{n}\colon \BB(A',A,A'')^{\otimes n} \to \BB(A',A,A'')
\end{equation}
of degree~\(2-n\) via
\begin{align}
  \label{eq:def-m2}
  m_{2}(\taa_{1},\taa_{2}) &\eqKS a'_{1}\,\EE\bigl(\swee{\aa_{1}}{1}, S\,h_{1}(\swee{\laa_{2}}{1})\bigr)
    \smallotimes \swee{\raa_{1}}{2}\cdot\swee{\raa_{2}}{2} \\
  \notag &\qquad + a'_{1}\,\epsilon(a'_{2})\smallotimes \raa_{1}\cdot\raa_{2} \\
  m_{n}(\taa_{1},\dots,\taa_{n}) &\eqKS a'_{1}\,\EE\bigl(\swee{\aa_{1}}{1}, S\,h_{n-1}(\swee{\laa_{2}}{1},\dots,\swee{\laa_{n}}{1})\bigr)
    \smallotimes \swee{\raa_{1}}{2}\cdots\swee{\raa_{n}}{2}
\end{align}
for~\(n\ge3\).
Here \(\swee{\aa_{1}}{1}\) is considered as an element of~\(\BB A'\)
and \(\swee{\laa_{2}}{1}\),~\dots,~\(\swee{\laa_{n}}{1}\) as elements of~\(\BB(A',A',\kk)\).
In the first term of~\eqref{eq:def-m2} we have written \(h_{1}(\swee{\laa_{2}}{1})\) instead of~\(\swee{\laa_{2}}{1}\)
to highlight the analogy with the formula for~\(n>2\).

\begin{remark}
  The map~\(m_{2}\) agrees with the non-associative product on~\(\BB(A',A,A'')\) defined by Carlson--Franz,
  see~\cite[Thm.~A.5]{CarlsonFranz} and also \Cref{rem:m2-hga}. Moreover, \(f_{2}\) is the homotopy~\(h\)
  from~\(f_{1}(\taa_{1})\,f_{1}(\taa_{2})\) to~\(f_{1}(m_{2}(\taa_{1},\taa_{2}))\)
  given in~\cite[eq.~(A.19)]{CarlsonFranz}.
\end{remark}

\subsection{Some lemmas}

\begin{lemma}
  \label{thm:S}
  For~\(\taa\in \BB(A,A,A'')\) we have
  \begin{align*}
    \rDelta\,S(\taa) &= \BBone \otimes S(\taa) + S(\swee{\laa}{1}) \otimes \swee{\raa}{2} \in \BB A\otimes \BB(\kk,A,A''), \\
    d(S)(\taa) &= \epsilon(a')\,\aa\smallotimes a'' - \BBone\smallotimes \ff(\taa) \in \BB(\kk,A,A'').
  \end{align*}
\end{lemma}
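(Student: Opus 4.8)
The plan is to prove both identities by direct computation from the definitions of~$S$, of~$\rDelta$ and of the two differentials; no input beyond careful bookkeeping is needed.

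For the coproduct formula I would expand $S(\taa)=[\bar a|a_1|\dots|a_k]a''$, an element of~$\BB(\kk,A,A'')$ of length~$k+1$, and apply $\rDelta$, that is, the instance of~\eqref{eq:Delta-AAA} with trivial left factor. This produces the sum over all cuts of the word $[\bar a|a_1|\dots|a_k]$ into a left block in~$\BB A$ and a right block in~$\BB(\kk,A,A'')$. The cut placed before~$\bar a$ yields $\BBone\otimes S(\taa)$, while the cut after~$a_i$ yields $[\bar a|a_1|\dots|a_i]\otimes[a_{i+1}|\dots|a_k]a''$. Reading off $\tDelta\,\taa=\swee{\laa}{1}\otimes\swee{\raa}{2}=\sum_i a[a_1|\dots|a_i]\otimes[a_{i+1}|\dots|a_k]a''$ from~\eqref{eq:Delta-AAA}, I recognize the left block as $S(\swee{\laa}{1})$ and the right block as~$\swee{\raa}{2}$. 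Since $\rDelta$ is sign-free and~$S$ merely inserts one bar, no Koszul signs intervene and the first identity follows verbatim.

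For the second identity I would use $\deg S=-1$, so that~\eqref{eq:diff-map} gives $d(S)=d\,S+S\,d$, and expand the two composites via the differential of~$\BB(\kk,A,A'')$ on~$d(S\taa)$ and the differential of~$\BB(A,A,A'')$ on~$S(d\taa)$. The strategy is to show that almost everything cancels in pairs. Concretely, the internal differential of the front entry of~$d(S\taa)$ cancels $S$ applied to the term $(da)[a_1|\dots|a_k]a''$ of~$d\taa$ (using $\overline{da}=d\bar a$); the internal differentials of the remaining entries and the adjacent multiplications among $a_1,\dots,a_k$ cancel $S$ applied to the bar-differential part of~$d\taa$; the $da''$-terms cancel; and the right twisting term of~$d(S\taa)$, which feeds~$a_k$ into~$a''$, cancels $S$ applied to the right twisting term of~$d\taa$. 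The only terms left without a partner are those in which the front entry~$\bar a$ is consumed. For~$k\ge1$ this happens through the front adjacent multiplication $[\bar a a_1|a_2|\dots|a_k]a''$ of~$d(S\taa)$; writing $\bar a a_1=a a_1-\epsilon(a)\,a_1$, the $aa_1$-part cancels $S$ of the left twisting term~$\ldd$ of~$d\taa$ (see~\eqref{eq:def-ldd}), while the residual $\epsilon(a)[a_1|\dots|a_k]a''$ survives as $\epsilon(a)\,\aa\smallotimes a''$; the $\ff$-term is absent because $\ff(\taa)=0$ for~$k\ge1$. For~$k=0$ the front entry is instead consumed by the right twisting term, producing $-\BBone\otimes\bar a\,a''$, and the splitting $\bar a=a-\iota\epsilon(a)$ yields exactly $-\BBone\smallotimes\ff(\taa)+\epsilon(a)\,\BBone\smallotimes a''$. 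In either case the total equals the claimed right-hand side.

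The routine part is the combinatorial matching of terms; the genuine difficulty lies in the signs. I would reconcile the desuspension convention~\eqref{eq:d-desusp} with the explicit twisting terms so that each cancelling pair carries opposite signs, a computation that works out because passing the differential over the front desuspended entry contributes exactly the factor matching the $(-1)^{\deg a}$ in the tensor differential of~$\BB(A,A,A'')$. The two surviving boundary contributions must then be checked to carry the coefficients $+\epsilon(a)$ and~$-1$; here a useful observation is that $\epsilon(a)$ vanishes unless $\deg a=0$, which collapses the a priori sign $(-1)^{\deg a}$ in front of the $\epsilon(a)$-term to~$+1$. I would pin down all conventions by first carrying out the cases $k=0$ and $k=1$ explicitly, where every term is visible, and then reading off the general pattern.
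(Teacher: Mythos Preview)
Your proposal is correct and follows exactly the approach the paper has in mind: the paper omits the proof entirely (``We omit the short proof''), indicating that it is a direct verification from the definitions, which is precisely what you outline. Your term-by-term matching for both identities, together with the observation that $\overline{da}=d\bar a$ and that the sign~$(-1)^{\deg a}$ in front of the surviving $\epsilon(a)$-term collapses to~$+1$, is the intended computation.
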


We omit the short proof.
The case~\(A''=\kk\) is can be found in~\cite[\S 2.3]{GugenheimMunkholm:1974}; there one gets in particular
\begin{equation}
  \label{eq:d-S-k}
  d(S)(\laa) = \epsilon(a')\,\aa - \epsilon(a')\,\epsilon(\aa)\,\BBone.
\end{equation}

\begin{lemma}
  \label{thm:Delta-h}
  For~\(n\ge1\) and~\(\taa_{1}\),~\dots,~\(\taa_{n}\in \BB(A,A,A'')\) we have
  \begin{multline*}
    \tDelta\,h_{n}(\taa_{1},\dots,\taa_{n}) \eqKS
      h_{n}(\swee{\laa_{1}}{1},\dots,\swee{\laa_{n}}{1}) \otimes \swee{\raa_{1}}{2}\cdots\swee{\raa_{n}}{2} \\
      + \sum_{k=1}^{n-1} h_{k}(\swee{\laa_{1}}{1},\dots,\swee{\laa_{k}}{1})
      \otimes \swee{\raa_{1}}{2}\cdots\swee{\raa_{k}}{2}\cdot S\,h_{n-k}(\taa_{k+1},\dots,\taa_{n}),
  \end{multline*}
  where the maps~\(h_{n}\) and~\(h_{k}\) on the right-hand side take values in the left bar construction~\(\BB(A,A,\kk)\).
\end{lemma}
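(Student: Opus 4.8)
The plan is to argue by induction on $n$, exploiting the recursive definition of $h_{n}$ together with the compatibility of the two-sided diagonal $\tDelta$ with the Kadeishvili--Saneblidze product. For $n=1$ the assertion is simply $\tDelta\,h_{1}(\taa_{1})=\tDelta\,\taa_{1}$, the sum on the right being empty, so the leading term just reproduces the definition of $\tDelta$. The computational heart is the case $n=2$, which then drives the induction.

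For $n=2$ I would start from $h(\taa_{1},\taa_{2})\eqKS a'_{1}\otimes\raa_{1}\cdot S(\taa_{2})$. Since $\tDelta$ leaves the left factor $a'_{1}$ fixed, computing $\tDelta\,h(\taa_{1},\taa_{2})$ reduces to applying $\rDelta$ to the right-bar element $\raa_{1}\cdot S(\taa_{2})\in\BB(\kk,A,A'')$ and re-attaching $a'_{1}$ to the left of the first tensor factor. Applying \Cref{thm:diag-rb} to this product and then \Cref{thm:S}, which gives $\rDelta\,S(\taa_{2})=\BBone\otimes S(\taa_{2})+S(\swee{\laa_{2}}{1})\otimes\swee{\raa_{2}}{2}$, produces exactly two summands. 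Re-attaching $a'_{1}$ and using $a'_{1}\swee{\aa_{1}}{1}=\swee{\laa_{1}}{1}$ together with $a'_{1}[\swee{\aa_{1}}{1}\cdot S(\swee{\laa_{2}}{1})]=h(\swee{\laa_{1}}{1},\swee{\laa_{2}}{1})$ (the $A''=\kk$ instance of $h$) identifies these summands as $h_{2}(\swee{\laa_{1}}{1},\swee{\laa_{2}}{1})\otimes\swee{\raa_{1}}{2}\cdot\swee{\raa_{2}}{2}$ and $\swee{\laa_{1}}{1}\otimes\swee{\raa_{1}}{2}\cdot S(\taa_{2})$, which is the asserted formula for $n=2$.

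For the inductive step ($n\ge3$) I would put $\mathbf c=h(\taa_{n-1},\taa_{n})$, so that $h_{n}(\taa_{1},\dots,\taa_{n})=h_{n-1}(\taa_{1},\dots,\taa_{n-2},\mathbf c)$, and apply the inductive hypothesis to the $n-1$ arguments $\taa_{1},\dots,\taa_{n-2},\mathbf c$. The outcome involves both the two-sided diagonal of $\mathbf c$ (into which I substitute the $n=2$ formula above) and $\mathbf c$ itself as the last argument of the inner maps $h_{n-1-k}$. Everything then assembles by the recursion $h_{m-1}(\dots,h(\cdot,\cdot))=h_{m}(\dots,\cdot,\cdot)$: the part $h(\swee{\laa_{n-1}}{1},\swee{\laa_{n}}{1})$ of $\tDelta\,\mathbf c$, placed in the last slot of $h_{n-1}$, collapses to $h_{n}(\swee{\laa_{1}}{1},\dots,\swee{\laa_{n}}{1})$ and yields the leading term; the complementary part $\swee{\laa_{n-1}}{1}\otimes\swee{\raa_{n-1}}{2}\cdot S(\taa_{n})$ yields the $k=n-1$ summand; and in the remaining summands the factor $S\,h_{n-1-k}(\taa_{k+1},\dots,\taa_{n-2},\mathbf c)$ collapses to $S\,h_{n-k}(\taa_{k+1},\dots,\taa_{n})$, reproducing the summands for $1\le k\le n-2$.

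I expect the $n=2$ case to be the main obstacle, since it is where the three structural inputs---the definition of $h$, the behaviour of $\rDelta$ on a product (\Cref{thm:diag-rb}), and the diagonal of the shift $S$ (\Cref{thm:S})---must be combined, and where the Koszul signs suppressed by the $\eqKS$ convention have to be checked to agree on both sides. The inductive step is then formal term-matching governed by the recursion for $h_{n}$; the only delicate points there are keeping the index ranges straight (that the $k=n-1$ summand and the leading term both originate from the first term of the inductive hypothesis, while the range $1\le k\le n-2$ comes from its sum) and, once more, consistency of signs.
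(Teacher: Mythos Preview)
Your argument is correct and follows exactly the route taken in the paper: the case~$n=1$ is the definition of~$\tDelta$, the case~$n=2$ is obtained by combining \Cref{thm:diag-rb} with the first identity of \Cref{thm:S}, and the general case follows by induction via the recursion~$h_{n}=h_{n-1}(\dots,h(\cdot,\cdot))$. You have simply spelled out in detail what the paper records in two lines.
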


\begin{proof}
  For~\(n=1\) our claim reduces to formula~\eqref{eq:Delta-AAA}. For~\(n=2\) it means
  \begin{equation}
    \tDelta\,h(\taa_{1},\taa_{2}) \eqKS
      h(\swee{\laa_{1}}{1},\swee{\laa_{2}}{1}) \otimes \swee{\raa_{1}}{2} \cdot \swee{\raa_{2}}{2}
      + \swee{\laa_{1}}{1} \otimes \swee{\raa_{1}}{2}\cdot S(\taa_{2}),
    \end{equation}
    which is a consequence of \Cref{thm:S,thm:diag-rb}.
    The claim for~\(n\ge3\) now follows by induction.
\end{proof}

\begin{lemma}
  For~\(n\ge2\) we have
  \begin{equation}
    \tDelta\,m_{n}(\taa_{1},\dots,\taa_{n}) \eqKS m_{n}(\swee{\laa_{1}}{1}\cdots\swee{\laa_{n}}{1}) \otimes \swee{\raa_{1}}{2}\cdots\swee{\raa_{n}}{2}
  \end{equation}
  where the map~\(m_{n}\) on the right-hand side takes values in the left bar construction~\(\BB(A',A,\kk)\).
\end{lemma}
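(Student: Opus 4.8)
The plan is to reduce the claimed diagonal formula for $m_{n}$ to the already-established diagonal formula for $h_{n}$ in \Cref{thm:Delta-h}, together with the coalgebra structure of the right bar construction. The key observation is that $m_{n}$ is built from three ingredients: the twisting cochain $\EE$ applied to $\swee{\aa_{1}}{1}$ and $S\,h_{n-1}(\swee{\laa_{2}}{1},\dots,\swee{\laa_{n}}{1})$ in the left factor, the product $\swee{\raa_{1}}{2}\cdots\swee{\raa_{n}}{2}$ in the right bar construction in the right factor, and the prefactor $a'_{1}$. Since $\tDelta$ splits an element of $\BB(A',A,A'')$ into a left piece in $\BB(A',A,\kk)$ and a right piece in $\BB(\kk,A,A'')$, I would compute $\tDelta\,m_{n}$ by separately applying $\lDelta$-type coproducts to each of these three ingredients and then recombining.

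First I would treat $n=2$ and $n\ge 3$ uniformly by writing $\EE(\swee{\aa_{1}}{1}, S\,h_{n-1}(\dots))$ as the twisting term and recalling that on the right bar construction the map $\rDelta$ is multiplicative by \Cref{thm:diag-rb}, so that the right factor $\swee{\raa_{1}}{2}\cdots\swee{\raa_{n}}{2}$ already comes equipped with the correct coproduct splitting its $\BB A$-head off. Next I would apply $\rDelta$ to $S\,h_{n-1}$ using the first formula of \Cref{thm:S}, namely $\rDelta\,S(\taa)=\BBone\otimes S(\taa)+S(\swee{\laa}{1})\otimes\swee{\raa}{2}$, combined with the inductive diagonal of $h_{n-1}$ from \Cref{thm:Delta-h}. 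The point is that the $\BBone$-term and the genuine splitting term are exactly what is needed to produce, after feeding through $\EE$, both the unit normalization $\EE(\cdot,\BBone)=t_{A}(\cdot)$ from~\eqref{eq:EE-unit} and the associativity/twisting identities~\eqref{eq:d-EE}--\eqref{eq:EE-assoc}. I would organize the bookkeeping so that the left output lands in $\BB(A',A,\kk)$ and matches the right-hand side $m_{n}(\swee{\laa_{1}}{1}\cdots\swee{\laa_{n}}{1})$, where $m_{n}$ is the same formula evaluated with the right-module factor replaced by its shuffle product in $\BB A$.

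The hard part will be the careful matching of the coproduct terms coming from $\rDelta\,S\,h_{n-1}$ against the coproduct of the right factor $\swee{\raa_{1}}{2}\cdots\swee{\raa_{n}}{2}$, since both carry their own Sweedler splittings and the coassociativity relation $(\lDelta\otimes 1)\,\tDelta=(1\otimes\rDelta)\,\tDelta$ must be used to align them into a single tensor factorization. In particular one must verify that the ``head'' $S\,h_{n-k}(\taa_{k+1},\dots,\taa_{n})$ appearing in the second line of \Cref{thm:Delta-h} gets absorbed into the right-hand factor $\swee{\raa_{k+1}}{2}\cdots\swee{\raa_{n}}{2}$ via the product in $\BB(\kk,A,A'')$, so that the cross terms collapse and only the diagonal $m_{n}\otimes(\text{product})$ survives. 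I expect the signs, governed by the ``$\eqKS$'' convention, to be automatic once the underlying maps are matched, so the main work is combinatorial rather than sign-chasing.

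Once these pieces are assembled, the proof concludes by induction on $n$, with the base case $n=2$ handled directly from \Cref{thm:S,thm:diag-rb} and~\eqref{eq:EE-assoc}, and the inductive step following from the recursion $h_{n}(\taa_{1},\dots,\taa_{n})\eqKS h_{n-1}(\taa_{1},\dots,h(\taa_{n-1},\taa_{n}))$ together with the already-proven diagonal formula for $h_{n}$.
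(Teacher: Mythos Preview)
Your plan rests on a misreading of where the constituents of $m_{n}$ live, and the extra machinery you propose (applying $\rDelta$ to $S\,h_{n-1}$, invoking \Cref{thm:S}, \Cref{thm:Delta-h}, and the $\EE$-identities \eqref{eq:EE-unit}--\eqref{eq:EE-assoc}, then doing induction) is chasing structure that $\tDelta$ never sees.

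Recall that $\EE\colon \BB A\otimes\BB A\to A$ takes values in~$A$; it is the twisting cochain $t_{A}\,\mu$. Hence in the defining formula
\[
  m_{n}(\taa_{1},\dots,\taa_{n}) \eqKS
  a'_{1}\,\EE\bigl(\swee{\aa_{1}}{1}, S\,h_{n-1}(\swee{\laa_{2}}{1},\dots,\swee{\laa_{n}}{1})\bigr)
  \smallotimes \swee{\raa_{1}}{2}\cdots\swee{\raa_{n}}{2}
\]
the entire prefix $a'_{1}\,\EE(\dots)$ is already an element of~$A'$; the only part of $m_{n}$ lying in $\BB A\otimes A''$ is the right-bar product $\swee{\raa_{1}}{2}\cdots\swee{\raa_{n}}{2}$. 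Since $\tDelta$ splits only the $\BB A$-component, it leaves the $A'$-coefficient untouched and applies $\rDelta$ to that right-bar factor. By iterated use of \Cref{thm:diag-rb} and coassociativity this factor splits as $\swee{\aa_{1}}{2}\cdots\swee{\aa_{n}}{2}\otimes\swee{\raa_{1}}{3}\cdots\swee{\raa_{n}}{3}$, which is precisely the claimed right-hand side once one unwinds $m_{n}(\swee{\laa_{1}}{1},\dots,\swee{\laa_{n}}{1})$ for~$A''=\kk$. The extra term for $n=2$ is handled identically. This is why the paper's proof is one sentence: ``This holds by construction and \Cref{thm:diag-rb}.''

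In particular, there is nothing to ``feed through $\EE$'': the argument $S\,h_{n-1}(\dots)$ is consumed by $\EE$ into a scalar in~$A'$ before $\tDelta$ is ever applied, so its coproduct is irrelevant. The identities \eqref{eq:d-EE} and \eqref{eq:EE-assoc} concern the differential and multiplicative behaviour of~$\EE$, not comultiplication, and \Cref{thm:Delta-h} plays no role in this lemma.
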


\begin{proof}
  This holds by construction and \Cref{thm:diag-rb}.
\end{proof}

\begin{lemma}
  \label{thm:mn-h}
  For~\(n\ge2\) and~\(\taa_{1}\),~\dots,~\(\taa_{n+1}\in \BB(A,A,A'')\) we have
  \begin{multline*}
    \qquad
    m_{n}\bigl(\taa_{1},\dots,h(\taa_{n},\taa_{n+1})\bigr) \eqKS m_{n+1}(\taa_{1},\dots,\taa_{n+1}) \\
      + (-1)^{n}\,h\bigl(m_{n}(\taa_{1},\dots,\taa_{n}),\taa_{n+1}\bigr).
    \qquad
  \end{multline*}
\end{lemma}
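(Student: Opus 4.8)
The plan is to establish the identity by expanding the left-hand side directly from the definitions of~$m_{n}$ and~$h$, using \Cref{thm:Delta-h} in the case~$n=2$ as the only non-formal input. I would separate the generic case~$n\ge3$, where $m_{n}$ consists of a single term, from the case~$n=2$, where the extra $\epsilon$-summand of~\eqref{eq:def-m2} has to be carried along.

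First I would insert $\tbb\coloneqq h(\taa_{n},\taa_{n+1})$ as the last argument of~$m_{n}$. Since $\tbb$ enters the defining formula only through its coproduct, I would substitute
\[
  \tDelta\,h(\taa_{n},\taa_{n+1}) \eqKS h(\swee{\laa_{n}}{1},\swee{\laa_{n+1}}{1})\otimes\swee{\raa_{n}}{2}\cdot\swee{\raa_{n+1}}{2} + \swee{\laa_{n}}{1}\otimes\swee{\raa_{n}}{2}\cdot S(\taa_{n+1}),
\]
which is \Cref{thm:Delta-h} for~$n=2$. This breaks $m_{n}(\taa_{1},\dots,\tbb)$ into two summands. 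In the first, the entry $h(\swee{\laa_{n}}{1},\swee{\laa_{n+1}}{1})$ sits in the last slot of~$h_{n-1}$; reading the defining recursion of the~$h_{k}$ backwards turns $h_{n-1}(\swee{\laa_{2}}{1},\dots,h(\swee{\laa_{n}}{1},\swee{\laa_{n+1}}{1}))$ into $h_{n}(\swee{\laa_{2}}{1},\dots,\swee{\laa_{n+1}}{1})$, and associativity of the Kadeishvili--Saneblidze product fuses the right-hand factors into $\swee{\raa_{1}}{2}\cdots\swee{\raa_{n+1}}{2}$. The result is exactly $m_{n+1}(\taa_{1},\dots,\taa_{n+1})$.

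In the second summand the last slot of~$h_{n-1}$ carries $\swee{\laa_{n}}{1}$, while $S(\taa_{n+1})$ is appended at the right end of the product. I would match this against $h(m_{n}(\taa_{1},\dots,\taa_{n}),\taa_{n+1})$: writing $\tbb=m_{n}(\taa_{1},\dots,\taa_{n})$, its left factor is $a'_{1}\,\EE(\swee{\aa_{1}}{1},S\,h_{n-1}(\swee{\laa_{2}}{1},\dots,\swee{\laa_{n}}{1}))$ and its right part is $\swee{\raa_{1}}{2}\cdots\swee{\raa_{n}}{2}$, so that the definition of~$h$ reproduces the second summand (up to sign) after multiplying the right part by~$S(\taa_{n+1})$, again using associativity. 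For~$n=2$ I would additionally observe that feeding $\tbb=h(\taa_{2},\taa_{3})$ into the $\epsilon$-summand $a'_{1}\,\epsilon(b')\smallotimes\raa_{1}\cdot\rbb$ of~\eqref{eq:def-m2} produces $a'_{1}\,\epsilon(a'_{2})\smallotimes\raa_{1}\cdot\raa_{2}\cdot S(\taa_{3})$, which is precisely $h(-,\taa_{3})$ applied to the $\epsilon$-summand of $m_{2}(\taa_{1},\taa_{2})$; thus the two pieces of~$m_{2}$ assemble correctly into $m_{3}$ and $h(m_{2}(\taa_{1},\taa_{2}),\taa_{3})$.

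The hard part will be the sign~$(-1)^{n}$. In the $\eqKS$ convention both summands produced by \Cref{thm:Delta-h} carry a plus sign, so at that level neither the passage to~$m_{n+1}$ nor the passage to~$h(m_{n}(-),\taa_{n+1})$ shows a sign; the actual content of the lemma is that, after reinstating the Koszul signs suppressed by~$\eqKS$, the first identification costs nothing whereas the second costs~$(-1)^{n}$. I would therefore keep careful track of these signs throughout the expansion, the decisive contribution being the parity of the degree $2-n\equiv n\pmod2$ of~$m_{n}$ as it is commuted to the outside of the homotopy~$h$ — the same kind of Koszul sign that already governs the $\Ainf$ relations~\eqref{eq:def-ainf-mul}. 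Since the identification of the first summand with~$m_{n+1}$ preserves the left-to-right order of all inputs, it retains coefficient~$+1$, so that the factor~$(-1)^{n}$ attaches solely to the $h$-term.
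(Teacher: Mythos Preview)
Your proposal is correct and follows precisely the route the paper takes: the paper's proof is the single sentence ``This follows from the definitions and the case~$n=2$ of \Cref{thm:Delta-h},'' and you have spelled out exactly that computation, including the separate handling of the $\epsilon$-summand for~$n=2$ and the identification of the sign~$(-1)^{n}$ with the Koszul cost of exchanging~$m_{n}$ (degree~$2-n$) and~$h$ (degree~$-1$).
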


\begin{proof}
  This follows from the definitions and the case~\(n=2\) of \Cref{thm:Delta-h}.
\end{proof}

\begin{lemma}
  \label{thm:d-h}
  For~\(n\ge1\) and~\(\taa_{1}\),~\dots,~\(\taa_{n}\in \BB(A,A,A'')\) we have
  \begin{multline*}
    d(h_{n})(\taa_{1},\dots,\taa_{n}) \eqKS
      \sum_{l=1}^{n-1} (-1)^{l}\, h_{l}(\taa_{1},\dots,\taa_{l})\cdot f_{n-l}(\taa_{l+1},\dots,\taa_{n}) \\
      + \sum_{l=2}^{n}\sum_{i=0}^{n-l} (-1)^{i+l(n-i-l)}\,h_{n-l+1}\bigl(\taa_{1},\dots,m_{l}(\taa_{i+1},\dots,\taa_{i+l}),\dots,\taa_{n}\bigr).
  \end{multline*}
\end{lemma}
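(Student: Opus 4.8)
The plan is to prove \Cref{thm:d-h} by induction on~$n$, using the recursive definition of the maps~$h_{n}$ and~$f_{n}$ together with the two structural lemmas already established, namely \Cref{thm:Delta-h} (the coproduct formula for~$h_{n}$) and \Cref{thm:mn-h} (the relation between~$m_{n}$, $m_{n+1}$ and~$h$). The base case~$n=1$ is immediate: since $h_{1}=\id$ is a chain map, both sides vanish (the two sums on the right are empty for~$n=1$), so $d(h_{1})=0$. The content is therefore entirely in the inductive step.

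For the inductive step I would compute $d(h_{n})$ by differentiating the recursive formula $h_{n}(\taa_{1},\dots,\taa_{n})\eqKS h_{n-1}(\taa_{1},\dots,\taa_{n-2},h(\taa_{n-1},\taa_{n}))$. Since $h_{n-1}$ is (up to sign) a composite, the differential~$d(h_{n})$ splits via the Leibniz rule into two contributions: one where $d$ hits the outer~$h_{n-1}$, to which the inductive hypothesis applies, and one where $d$ hits the inner~$h$, for which I need the value of~$d(h)$. The crucial input at the innermost level is the case~$n=2$ of the present lemma, $d(h)=d(h_{2})$, which I would establish first as a self-contained computation: it asserts
\begin{equation}
  d(h_{2})(\taa_{1},\taa_{2})\eqKS -\,h_{1}(\taa_{1})\cdot f_{1}(\taa_{2}) + h_{1}\bigl(m_{2}(\taa_{1},\taa_{2})\bigr),
\end{equation}
and unwinds to a direct comparison using $d(S)$ from \Cref{thm:S}, the defining formula~\eqref{eq:def-prod-bar-right} for the Kadeishvili--Saneblidze product, the definition of~$m_{2}$ in~\eqref{eq:def-m2}, and the fact that $\cdot$ on~$\BB(\kk,A,A'')$ is a chain map.

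From there I would combine the two Leibniz contributions. The term where $d$ hits the inner~$h$ produces, via the base identity, a piece $h_{n-1}(\taa_{1},\dots,m_{2}(\taa_{n-1},\taa_{n}))$ and a piece involving~$f_{1}$; the term where $d$ hits~$h_{n-1}$ feeds the inductive hypothesis, whose two sums I then re-expand by pushing the outer~$h$ back inside using \Cref{thm:mn-h} to convert $h_{n-k}(\dots,h(\dots))$-type expressions into the $m_{n-k+1}$ and $h(m_{n-k},-)$ terms. The expected main obstacle is bookkeeping: matching the reindexed double sum $\sum_{l}\sum_{i}$ with the correct signs $(-1)^{i+l(n-i-l)}$, and verifying that the products~$f_{l}\cdot h_{n-l}$ versus $h_{l}\cdot f_{n-l}$ assemble exactly into the claimed right-hand side. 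The sign convention~$\eqKS$ suppresses the Koszul signs from permuting arguments, which substantially reduces the exponent tracking, but I would still have to confirm that the explicit signs $(-1)^{l}$ and $(-1)^{i+l(n-i-l)}$ emerge correctly from the recursion—particularly the $(-1)^{n}$ appearing in \Cref{thm:mn-h}, which is what ultimately produces the boundary terms $l=n$ and $i=0$ in the two sums. Once the signs are pinned down, associativity of the product on~$\BB(\kk,A,A'')$ and the $A''$-linearity of~$h_{n}$ and~$f_{n}$ close the induction.
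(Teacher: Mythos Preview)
Your proposal is correct and follows essentially the same route as the paper: the base case~$n=2$ is established directly from \Cref{thm:S} and the definition of~$m_{2}$, and the inductive step proceeds via the recursion $h_{n}=h_{n-1}(\dots,h(-,-))$, \Cref{thm:mn-h}, and the right $A''$-equivariance of~$h_{n}$. One small remark: \Cref{thm:Delta-h} is not actually needed here (the paper does not invoke it in this proof), so you can drop it from your list of structural inputs.
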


\begin{proof}
  For~\(n=1\) we just assert that \(h_{1}\) is a chain map. For~\(n=2\) the claim is
  \begin{equation}
    d(h)(\taa_{1},\taa_{2}) = m_{2}(\taa_{1},\taa_{2}) - \taa_{1}\cdot \ff(\taa_{2}).
  \end{equation}
  To prove it, it is convenient to consider \(h\) as a map
  \begin{equation}
    A \otimes \BB(\kk,A,A'') \otimes \BB(A,A,A'') \to \BB(A,A,A''),
  \end{equation}
  where the domain is equipped with the tensor product differential~\(d_{\otimes}\).
  As in~\eqref{eq:def-ldd} we write the twisting term in the correct differential~\(d\) as~\(\ldd\).
  Using \Cref{thm:S}, one finds
  \begin{equation}
    (d\,h+h\,d_{\otimes})(\taa_{1},\taa_{2}) = h\bigl(\ldd(\taa_{1}),\taa_{2}\bigr) + m_{2}(\taa_{1},\taa_{2}) - \taa_{1}\cdot \ff(\taa_{2}),
  \end{equation}
  which gives the desired formula.
  The claim for~\(n>2\) follows by induction, \Cref{thm:mn-h} and the equivariance of~\(h_{n}\) with respect to the right \(A''\)-action.
\end{proof}

\begin{remark}
  \label{rem:d-h-k}
  Assume \(A''=\kk\). Then all bar elements in~\(h_{k}(\laa_{1},\dots,\laa_{k})\) have length at least~\(k-1\), so that
  \begin{equation}
    f_{k}(\laa_{1},\dots,\laa_{k}) = \ff(h_{k}(\laa_{1},\dots,\laa_{k}))
  \end{equation}
  vanishes for~\(k>1\). Hence the first sum on the right-hand side of \Cref{thm:d-h}
  reduces in this case to the term for~\(l=n-1\), which is
  \begin{equation}
    (-1)^{n-1}\,h_{n-1}(\laa_{1},\dots,\laa_{n-1})\,\epsilon(a'_{n})\,\epsilon(\aa_{n}).
  \end{equation}
\end{remark}

\subsection{Proof of the \texorpdfstring{\boldmath{\(\Ainf\)}}{A∞}-property}

We are now ready to prove our first main result.

\begin{theorem}
  \label{thm:ainf-bar-twosided}
  The family~\((m_{n})\) defines an augmented \(\Ainf\)-structure on~\(\BB(A',A,A'')\)
  with unit~\(1_{A'}\otimes\BBone_{\BB A}\otimes 1_{A''}\) and augmentation~\(\epsilon(\taa)=\epsilon(a')\,\epsilon(\aa)\,\epsilon(a'')\).
\end{theorem}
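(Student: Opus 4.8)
The plan is to verify the defining $\Ainf$-identity~\eqref{eq:def-ainf-mul} for the maps~$m_{n}$ by reducing it, via the diagonal~$\tDelta$, to the corresponding identity on the augmentation map~$\ff$ composed with the homotopies~$h_{n}$. The key observation is that all three structural maps $m_{n}$, $h_{n}$ and $f_{n}$ were built compatibly: $m_{n}$ is assembled from the twisting cochain~$\EE$ together with the right-bar product on the $\swee{\raa}{2}$-components, while $h_{n}$ and $f_{n}=\ff\,h_{n}$ package the same data. \Cref{thm:d-h} already records the differential of~$h_{n}$ in exactly the shape of the $\Ainf$-morphism identity~\eqref{eq:def-ainf-map} (with $h_{l}$ playing the role of the higher components and $f_{n-l}$ the products against the dga $A''$), so the strategy is to \emph{transport} that identity across the coalgebra structure to obtain~\eqref{eq:def-ainf-mul} for~$m_{n}$.

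First I would apply~$\tDelta$ to both sides of the claimed identity for $d(m_{n})$ and use the lemma computing $\tDelta\,m_{n}$ together with \Cref{thm:diag-rb} to see that the right-bar components $\swee{\raa_{i}}{2}$ factor through the associative dga product in $\BB(\kk,A,A'')$; since that product is strictly associative, the $m_{l}$-insertion terms in~\eqref{eq:def-ainf-mul} collapse on those factors, leaving only the behaviour of the left factor $a'_{1}\,\EE(\swee{\aa_{1}}{1},S\,h_{n-1}(\cdots))$. The real content therefore lives in the interplay between~$S$, the maps $h_{n-1}$, and~$\EE$. Here I would invoke \Cref{thm:S} for $d(S)$ and $\rDelta\,S$, \Cref{thm:Delta-h} for $\tDelta\,h_{n}$, and the twisting-cochain property~\eqref{eq:d-EE} of~$\EE$ together with its associativity~\eqref{eq:EE-assoc}. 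Applying the differential to $\EE(\swee{\aa_{1}}{1},S\,h_{n-1}(\cdots))$ produces the twisting term from~\eqref{eq:d-EE}, a term from $d(S)$ (which by \Cref{thm:S} splits into an $\ff$-term feeding the $f$-components and a term restoring a bar element), and a term from $d(h_{n-1})$ that \Cref{thm:d-h} expands into the two sums appearing on the right of~\eqref{eq:def-ainf-mul}.

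The main obstacle will be the bookkeeping of signs and the careful matching of the two double sums: I expect the $m_{l}$-insertion terms of~\eqref{eq:def-ainf-mul} to come partly from the second sum in \Cref{thm:d-h} (insertions strictly inside the $h_{n-1}$-block, i.e.\ $i\ge1$) and partly from \Cref{thm:mn-h}, which converts an outermost $h(\,\cdot\,,\taa_{n})$ into an $m_{n}$ versus an $m_{n-1}$ followed by $h$ — precisely the $i=0$ and $i=n-l$ boundary contributions. The first sum in~\eqref{eq:def-ainf-mul}, involving products $m_{l}\cdot m_{n-l}$ inside the bar construction, should be produced by the $h_{l}\cdot f_{n-l}$ sum of \Cref{thm:d-h} once one recognizes, via $f_{n-l}=\ff\,h_{n-l}$ and the right-bar product~\eqref{eq:def-prod-bar-right}, that $\EE(\swee{\aa}{1},S\,h_{l})\smallotimes(\cdots)\cdot f_{n-l}$ reassembles into $m_{l}(\cdots)\cdot m_{n-l}(\cdots)$. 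Verifying that the Koszul signs $(-1)^{l}$ and $(-1)^{i+l(n-i-l)}$ line up under the ``$\eqKS$'' convention, and that the $S$-generated term combines with the $h_{n-1}$-recursion so that no spurious terms survive, is where the genuine work lies; the unital and augmentation statements are then immediate from~\eqref{eq:EE-unit}, the strict unitality of the right-bar product, and the definition of~$\ff$.
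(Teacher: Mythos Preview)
Your plan has the right core mechanism --- computing the differential of~$\EE\bigl(\swee{\aa_{1}}{1},S\,h_{n-1}(\cdots)\bigr)$ by the Leibniz rule and feeding in the twisting-cochain identity~\eqref{eq:d-EE}, the formula for~$d(S)$ from \Cref{thm:S}, and \Cref{thm:d-h} for~$d(h_{n-1})$ --- and that is exactly how the paper proceeds. But two points in your write-up are off, and one of them is a genuine error.

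First, the statement about ``the first sum in~\eqref{eq:def-ainf-mul}, involving products $m_{l}\cdot m_{n-l}$'' is wrong: the $\Ainf$-\emph{algebra} identity~\eqref{eq:def-ainf-mul} contains no such products; it has only the single double sum of nested insertions~$m_{n-l+1}(\dots,m_{l}(\dots),\dots)$. You appear to be conflating it with the $\Ainf$-\emph{morphism} identity~\eqref{eq:def-ainf-map}. Correspondingly, the first sum in \Cref{thm:d-h} does \emph{not} reassemble into $m_{l}\cdot m_{n-l}$ terms. What actually happens is that $h_{n-1}$ is applied to the left-bar pieces~$\swee{\laa_{i}}{1}\in\BB(A',A',\kk)$, so \Cref{rem:d-h-k} kicks in: every $f_{k}$ with~$k\ge2$ vanishes there, and the entire first sum collapses to the single $l=n-2$ term. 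That surviving term, together with the $d(S)$ contribution, supplies the boundary pieces of the $i=0$ insertion~$m_{n-l+1}(m_{l}(\cdots),\cdots)$ --- namely the $l=2$ and $l=n-1$ cases --- while the generic $i=0$ insertions come from the twisting-cochain expansion of~$d(\EE)$ via \Cref{thm:Delta-h}, not from \Cref{thm:mn-h}.

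Second, the opening step of applying~$\tDelta$ is a detour. The paper instead separates the twisted differential on~$\BB(A',A,A'')$ into the tensor-product differential~$d_{\otimes}$ on~$A'\otimes\BB(\kk,A,A'')$ plus the left twisting term~$\ldd$ from~\eqref{eq:def-ldd}; it computes $d_{\otimes}(m_{n})$ directly, and the $\ldd$-terms (which you do not mention) cancel against the $t(\swee{\aa_{1}}{1})$-piece coming out of~$d(\EE)$. Without isolating~$\ldd$ you will not see how the $i=0$ nested term with~$l$ generic actually arises, nor how the argument closes.
 The unital and augmentation claims are, as you say, straightforward once the structure identity is in hand.
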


Note that for~\(A'=\kk\) this \(\Ainf\)-structure reduces to the Kadeishvili--Saneblidze dga structure
on~\(\BB(\kk,A,A'')\) since each hga operation~\(E_{k}\) vanishes on~\(\kk\) for~\(k\ge1\).

\begin{proof}
  We first verify the \(\Ainf\)-condition~\eqref{eq:def-ainf-mul}.
  For the moment, let us assume the case~\(n=2\) to hold and consider \(n\ge3\).
  For the purpose of this proof, let us also consider \(m_{n}\) as a map
  \begin{equation}
    m_{n}\colon A' \otimes \BB(\kk,A,A'') \otimes \BB(A',A,A'')^{\otimes(n-1)} \to A' \otimes \BB(\kk,A,A'')
  \end{equation}
  where both the domain and the target have the tensor product differential~\(d_{\otimes}\).
  As before, we denote both missing twisting terms by~\(\ldd\).

  We then have
  \begin{subequations}
  \begin{align}
    \label{eq:d-mn} \MoveEqLeft{d_{\otimes}(m_{n})(\taa_{1},\dots,\taa_{n})} \\
    \label{eq:eq-d-E} \eqKS {} & a'_{1}\,d(\EE)\bigl(\swee{\aa_{1}}{1}, S\,h_{n-1}(\swee{\laa_{2}}{1},\dots,\swee{\laa_{n}}{1})\bigr)\smallotimes \swee{\raa_{1}}{2}\cdots\swee{\raa_{n}}{2} \\
    \label{eq:eq-d-S} &- a'_{1}\,\EE\bigl(\swee{\aa_{1}}{1},d(S)\,h_{n-1}(\swee{\laa_{2}}{1},\dots,\swee{\laa_{n}}{1})\bigr)\smallotimes\swee{\raa_{1}}{2}\cdots\swee{\raa_{n}}{2} \\
    \label{eq:eq-d-h} &+ a'_{1}\,\EE\bigl(\swee{\aa_{1}}{1},S\,d(h_{n-1})(\swee{\laa_{2}}{1},\dots,\swee{\laa_{n}}{1})\bigr)\smallotimes\swee{\raa_{1}}{2}\cdots\swee{\raa_{n}}{2}.
  \end{align}
  \end{subequations}
  Let us consider each of the three terms on the right-hand side separately.

  We start with~\eqref{eq:eq-d-E}.
  \Cref{thm:S,thm:Delta-h} imply the identity
  \begin{multline}
    \Delta\,S\,h_{n-1}(\laa_{2},\dots,\laa_{n}) \eqKS \BBone \otimes S\,h_{n-1}(\laa_{2},\dots,\laa_{n}) \\
      + \sum_{l=1}^{n-2} S\,h_{l}(\swee{\laa_{2}}{1},\dots,\swee{\laa_{l+1}}{1})
        \otimes \swee{\aa_{2}}{2}\cdots\swee{\aa_{l+1}}{2}\cdot S\,h_{n-1-l}(\laa_{l+2},\dots,\laa_{n}) \\
      + S\,h_{n-1}(\swee{\laa_{2}}{1},\dots,\swee{\laa_{n}}{1}) \otimes \swee{\aa_{2}}{2}\cdots\swee{\aa_{n}}{2}.
  \end{multline}
  Combining this with the twisting cochain condition~\eqref{eq:d-EE}, we can write \eqref{eq:eq-d-E} as
  \begin{subequations}
  \begin{align}
    \notag &\eqKS a'_{1}\,\EE(\swee{\aa_{1}}{1},\BBone)\,\EE\bigl(\swee{\aa_{1}}{2},S\,h_{n-1}(\swee{\laa_{2}}{1},\dots,\swee{\laa_{n}}{1})\bigr)\smallotimes\swee{\raa_{1}}{3}\swee{\raa_{2}}{2}\cdots\swee{\raa_{n}}{2} \\
    \notag &\qquad + \sum_{l=1}^{n-2} (-1)^{l}\,\EE\bigl(\swee{\aa_{1}}{1},S\,h_{l}(\swee{\laa_{2}}{1},\dots,\swee{\laa_{l+1}}{1})\bigr) \\
    \notag &\qquad\qquad\qquad \cdot \EE\bigl(\swee{\aa_{1}}{2},\swee{\aa_{2}}{2}\cdots\swee{\aa_{l+1}}{2}\cdot S\,h_{n-1-l}(\swee{\laa_{l+2}}{1},\dots,\swee{\taa_{n}}{1})\bigr) \\
    \notag &\qquad\qquad\qquad\qquad\qquad\qquad \smallotimes\swee{\raa_{1}}{3}\cdots\swee{\raa_{l+1}}{3}\,\swee{\raa_{l+2}}{2}\cdots\swee{\raa_{n}}{2} \\
    \notag &\qquad + (-1)^{1-n}\,a'_{1}\,\EE\bigl(\swee{\aa_{1}}{1},S\,h_{n-1}(\swee{\laa_{2}}{1},\dots,\swee{\laa_{n}}{1})\bigr)\,\EE(\swee{\aa_{1}}{2},\swee{\aa_{2}}{2}\cdots\swee{\aa_{n}}{2}) \\
    \notag &\qquad\qquad\qquad\qquad\qquad\qquad \smallotimes\swee{\raa_{1}}{3}\cdots\swee{\raa_{n}}{3} \\
    \label{eq:a-1} &\eqKS a'_{1}\,t(\swee{\aa_{1}}{1})\,\EE\bigl(\swee{\aa_{1}}{2},S\,h_{n-1}(\swee{\laa_{2}}{1},\dots,\swee{\laa_{n}}{1})\bigr)\smallotimes\swee{\raa_{1}}{3}\swee{\raa_{2}}{2}\cdots\swee{\raa_{n}}{2} \\
    \label{eq:a-2} &\qquad + \sum_{l=2}^{n-1} (-1)^{l-1}\,\EE\bigl(\swee{\aa_{1}}{1},S\,h_{l-1}(\swee{\laa_{2}}{1},\dots,\swee{\laa_{l+1}}{1})\bigr) \\*
    \notag &\qquad\qquad\qquad \cdot \EE\bigl(\swee{\aa_{1}}{2}\cdots\swee{\aa_{l}}{2}, S\,h_{n-l}(\swee{\laa_{l+1}}{1},\dots,\swee{\taa_{n}}{1})\bigr) \\*
    \notag &\qquad\qquad\qquad\qquad\qquad\qquad \smallotimes\swee{\raa_{1}}{3}\cdots\swee{\raa_{l}}{3}\,\swee{\raa_{l+1}}{2}\cdots\swee{\raa_{n}}{2} \\
    \label{eq:a-3} &\qquad + (-1)^{1-n}\,a'_{1}\,\EE\bigl(\swee{\aa_{1}}{1},S\,h_{n-1}(\swee{\laa_{2}}{1},\dots,\swee{\laa_{n}}{1})\bigr)\,\EE(\swee{\aa_{1}}{2},\swee{\aa_{2}}{2}\cdots\swee{\aa_{n}}{2}) \\
    \notag &\qquad\qquad\qquad\qquad\qquad\qquad \smallotimes\swee{\raa_{1}}{3}\cdots\swee{\raa_{n}}{3}
  \end{align}
  \end{subequations}

  As in the special case~\eqref{eq:d-S-k} of \Cref{thm:S} we get for~\eqref{eq:eq-d-S} the term
  \begin{subequations}
  \begin{align}
    \label{eq:b-0} 
    &\eqKS -a'_{1}\,\EE\bigl(\swee{\aa_{1}}{1},\epsilon(a'_{2})\,\swee{\aa_{2}}{1}\,S\,h_{n-2}(\swee{\laa_{3}}{1},\dots,\swee{\laa_{n}}{1})\bigr) \smallotimes \swee{\raa_{1}}{2}\cdots\swee{\raa_{n}}{2} \\
    \label{eq:b-1} &\eqKS -a'_{1}\,\epsilon(a'_{2})\,\EE\bigl(\swee{\aa_{1}}{1}\cdot\swee{\aa_{2}}{1},S\,h_{n-2}(\swee{\laa_{3}}{1},\dots,\swee{\laa_{n}}{1})\bigr) \smallotimes \swee{\raa_{1}}{2}\cdots\swee{\raa_{n}}{2}.
  \end{align}
  \end{subequations}

  By \Cref{thm:d-h} and \Cref{rem:d-h-k} we have for~\eqref{eq:eq-d-h}
  \begin{subequations}
  \begin{align}
    \notag &\eqKS (-1)^{n-1}\,\EE\bigl(\swee{\aa_{1}}{1},S\,h_{n-2}(\swee{\laa_{2}}{1},\dots,\swee{\laa_{n-1}}{1})\,\epsilon(a'_{n})\,\epsilon(\swee{\aa_{n}}{1})\bigr) \smallotimes \swee{\raa_{1}}{2}\cdots\swee{\raa_{n}}{2} \\
    \notag &\qquad + \sum_{l=2}^{n-1}\sum_{i=0}^{n-1-l} (-1)^{i+l(n-1-i-l)}\,a'_{1} \\
    \notag &\qquad\qquad \cdot\EE\Bigl(\swee{\aa_{1}}{1}, S\,h_{n-l}\bigl(\swee{\laa_{2}}{1},\dots,m_{l}(\swee{\laa_{i+2}}{1},\dots,\swee{\laa_{i+1+l}}{1}),\dots,\swee{\laa_{n}}{1}\bigr)\Bigr) \\
    \notag &\qquad\qquad\qquad\qquad\qquad\qquad \smallotimes \swee{\raa_{1}}{2}\cdots\swee{\raa_{n}}{2} \\
    \label{eq:c-1} &\eqKS (-1)^{n-1}\,\EE\bigl(\swee{\aa_{1}}{1},S\,h_{n-2}(\swee{\laa_{2}}{1},\dots,\swee{\laa_{n-1}}{1})\bigr)\,\epsilon(a'_{n}) \smallotimes \swee{\raa_{1}}{2}\cdots\swee{\raa_{n-1}}{2}\cdot\raa_{n} \\
    \label{eq:c-2} &\qquad + \sum_{l=2}^{n-1}\sum_{i=0}^{n-1-l} (-1)^{i+l(n-1-i-l)}\,a'_{1} \\
    \notag &\qquad\qquad \cdot\EE\Bigl(\swee{\aa_{1}}{1}, S\,h_{n-l}\bigl(\swee{\laa_{2}}{1},\dots,m_{l}(\swee{\laa_{i+2}}{1},\dots,\swee{\laa_{i+1+l}}{1}),\dots,\swee{\laa_{n}}{1}\bigr)\Bigr) \\
    \notag &\qquad\qquad\qquad\qquad\qquad\qquad \smallotimes \swee{\raa_{1}}{2}\cdots\swee{\raa_{n}}{2}.
  \end{align}
  \end{subequations}

  Putting everything together, we obtain
  \begin{subequations}
  \begin{align}
    \notag \MoveEqLeft{d_{\otimes}(m_{n})(\taa_{1},\dots,\taa_{n})} \\
    \label{eq:res-1} \eqKS {} & (-1)^{2-n}\,m_{n}\bigl(\ldd(\taa_{1}),\dots,\taa_{n}\bigr) \\
    \label{eq:res-3} &- \ldd\bigl(m_{n}(\taa_{1},\dots,\taa_{n})\bigr) \\
    \label{eq:res-4} &- \sum_{l=2}^{n-1}\sum_{i=1}^{n-l} (-1)^{i+l(n-i-l)} \\*
    \notag &\qquad\qquad \cdot m_{n-l+1}\bigl(\taa_{1},\dots,m_{l}(\taa_{i+1},\dots,\taa_{i+l}),\dots,\taa_{n}\bigr) \\
    \label{eq:res-2} &- \sum_{l=2}^{n-1} (-1)^{l(n-l)}\,m_{n-l+1}\bigl(m_{l}(\taa_{1},\dots,\taa_{l}),\dots,\taa_{n}\bigr).
  \end{align}
  \end{subequations}
  Here we have rewritten \eqref{eq:a-1} as~\eqref{eq:res-1},
  \eqref{eq:a-3} as~\eqref{eq:res-3} and \eqref{eq:c-2} as~\eqref{eq:res-4}.
  \eqref{eq:res-2}, which is the missing case~\(i=0\) of~\eqref{eq:res-4}, essentially corresponds to~\eqref{eq:a-2}. However,
  for~\(l=2\) we have subsumed the term~\eqref{eq:b-1}, in line with the separate definition~\eqref{eq:def-m2} of~\(m_{2}\).
  Similarly, for~\(l=n-1\) we have incorporated the term~\eqref{eq:c-1} since \(n-l+1=2\) in this case.
  (If \(l=n-1=2\), then both terms appear.)

  To complete the proof for~\(n\ge3\),
  we replace the tensor product differentials by the twisted differentials including the twisting terms~\(\ldd\),
  \begin{equation}
    d(m_{n}) = d_{\otimes}(m_{n}) +\ldd\,m_{n} - (-1)^{2-n}\,m_{n}\,\bigl(\ldd\otimes 1^{\otimes(n-1)}\bigr),
  \end{equation}
  to get the desired result.

  For~\(n=2\) we have to show that \(m_{2}\) is a chain map. This is already contained in~\cite{CarlsonFranz},
  but we can also do it quickly now:
  We can still express \(d_{\otimes}(m_{2})\) as before 
  since the additional term \(a'_{1}\,\epsilon(a'_{2})\smallotimes \raa_{1}\cdot\raa_{2}\) in~\eqref{eq:def-m2} is a chain map
  with respect to the tensor product differentials.
  The differential in~\eqref{eq:eq-d-h} vanishes for~\(n=2\),
  the sums~\eqref{eq:a-2} and~\eqref{eq:c-2} are empty,
  and the term~\eqref{eq:b-0} becomes part of~\(\ldd(m_{2}(\taa_{1},\taa_{2}))\). Hence
  \begin{equation}
    d_{\otimes}(m_{2})(\taa_{1},\taa_{2}) = m_{2}\bigl(\ldd(\taa_{1}\taa_{2})\bigr) - \ldd\bigl(m_{2}(\taa_{1},\taa_{2})\bigr),
  \end{equation}
  which shows that \(m_{2}\) is a chain map with respect to the twisted differentials.

  We now turn to the claimed unit~\(\tBBone=1\otimes\BBone\otimes1\in \BB(A',A,A'')\).
  It is a cocycle and a two-sided identity for~\(m_{2}\). Now assume \(n\ge3\).
  Since \(S\,h_{n-1}(\laa_{2},\dots,\laa_{n})\) has length at least~\(1\), it follows that \(m_{n}(\taa_{1},\dots,\taa_{n})\)
  vanishes if \(\taa_{1}=\tBBone\). By induction one can show that \(S\,h_{n-1}(\laa_{2},\dots,\laa_{n})\) vanishes
  if one of the arguments is the identity, hence so does \(m_{n}\). This proves that the \(\Ainf\)-structure
  is strictly unital.

  The map~\(\taa\mapsto\epsilon(\taa)=\epsilon(a')\,\epsilon(\raa)\) is a chain map and multiplicative with respect to~\(m_{2}\).
  For~\(n\ge3\), the image of~\(m_{n}\) lies in \(\bar A'\otimes \BB(\kk,A,A'')\)
  because elements in the image of~\(S\,h_{n-1}\) have length at least~\(2\).
  Hence \(\epsilon\) vanishes on the image of~\(m_{n}\) for~\(n\ge3\).
  It therefore is a strict morphism~\(\BB(A',A,A'')\to\kk\),
  and it clearly send \(\tBBone\) to~\(1\in\kk\). So \(\epsilon\) is a augmentation for~\(\BB(A',A,A'')\), which completes the proof.
\end{proof}

\begin{remark}
  \label{rem:product-E2}
  Assume that \(\kk\) is a principal ideal domain
  and that \(A\) and~\(A'\) (or \(A\) and~\(A''\)) as well as their cohomologies are torsion-free over~\(\kk\).
  The base component~\(m_{2}\) of the \(\Ainf\)-structure on~\(\BB(A',A,A'')\) is compatible with the filtration
  by the length of elements. Hence we obtain a spectral sequence of algebras whose first page is
  \begin{equation}
    E_{1} = \BB\bigl(H^{*}(A'),H^{*}(A),H^{*}(A'')\bigr)
  \end{equation}
  by the Künneth theorem and our assumption of torsion-freeness.
  The product on this page is the composition of the shuffle product~\eqref{eq:def-shuffle-twosided}
  with the map
  \begin{equation}
    \BB\bigl(H^{*}(A')^{\otimes2},H^{*}(A)^{\otimes2},H^{*}(A'')^{\otimes2}\bigr)
    \to \BB\bigl(H^{*}(A'),H^{*}(A),H^{*}(A'')\bigr)
  \end{equation}
  induced by the multiplication maps for~\(H^{*}(A)\),~\(H^{*}(A')\) and~\(H^{*}(A'')\),
  which are morphisms of algebras because of graded commutativity.
  The reason that we get this product on~\(E_{1}\) is that all deformation terms
  involving the hga operations~\(E_{k}\) with~\(k\ge1\) lower the filtration degree.

  As a consequence, the second page of the spectral sequence is
  \begin{equation}
    E_{2} = \Tor_{H^{*}(A)}\bigl(H^{*}(A'),H^{*}(A'')\bigr)
  \end{equation}
  with the usual product.
\end{remark}

\subsection{The augmentation map}
\label{sec:augm}

Assume that in addition to the triple~\((A',A,A'')\) considered so far
we have another hga~\(\AA\) together with morphisms~\(A'\to\AA\) and~\(A''\to\AA\)
such that the diagram
\begin{equation}
  \begin{tikzcd}
    A \arrow{d} \arrow{r} & A' \arrow{d} \\
    A'' \arrow{r} & \AA
  \end{tikzcd}
\end{equation}
commutes. We would like to have an \(\Ainf\)-morphism~\(\BB(A',A,A'')\Rightarrow\AA\)
that extends the canonical chain map
\begin{equation}
  \label{eq:BAAA-A}
  \BB(A',A,A'')\to\AA
  \qquad
  a'\otimes\aa\otimes a'' \mapsto a'\,\epsilon(\aa)\,a'',
\end{equation}
where on the right-hand side both~\(a'\) and~\(a''\) are considered as elements of~\(\AA\);
compare the definition of~\(\ff\) in~\eqref{eq:def-f}.
From the naturality of the \(\Ainf\)-structure on the two-sided bar construction
we immediately get a strict \(\Ainf\)-morphism
\begin{equation}
  \BB(A',A,A'')\Rightarrow \BB(\AA,\AA,\AA).
\end{equation}
Se we can assume~\(A=A'=A''=\AA\) for our construction. In a slightly more general setting, we have the following.

\begin{proposition}
  \label{thm:augm-ainf}
  The family~\((f_{n})\) defined in~\eqref{eq:def-fn} constitutes an augmented \(\Ainf\)-morphism~\(f\colon \BB(A,A,A'')\Rightarrow A''\).
  Hence the composition
  \begin{equation*}
    \BB(A',A,A'')\rightarrow \BB(\AA,\AA,\AA) \xRightarrow{f} \AA
  \end{equation*}
  is an augmented \(\Ainf\)-morphism extending the map~\(\BB(A',A,A'')\to\AA\) from~\eqref{eq:BAAA-A}.
\end{proposition}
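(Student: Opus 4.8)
The plan is to deduce the whole statement from \Cref{thm:d-h} by pushing it forward along the chain map~$\ff$. Recall from~\eqref{eq:def-fn} that $f_{n}=\ff\,h_{n}$ for all~$n\ge1$ (with $f_{1}=\ff$), that $\ff$ is a chain map, and that $\ff$ is a morphism of right $A''$-modules, since $\ff(\taa\cdot c'')=a'\,\epsilon(\aa)\,a''c''=\ff(\taa)\,c''$ for~$c''\in A''$. Because $\ff$ has degree~$0$ and $|h_{n}|=|f_{n}|=1-n$, the chain-map property gives $d(f_{n})=\ff\,d(h_{n})$, so it suffices to apply $\ff$ to the right-hand side of \Cref{thm:d-h}.

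First I would treat the two sums separately. In the first sum $\ff$ meets a right $A''$-module product $h_{l}(\dots)\cdot f_{n-l}(\dots)$, and the module-morphism property turns it into the genuine product in~$A''$:
\[
  \ff\bigl(h_{l}(\taa_{1},\dots,\taa_{l})\cdot f_{n-l}(\taa_{l+1},\dots,\taa_{n})\bigr)
  = f_{l}(\taa_{1},\dots,\taa_{l})\cdot f_{n-l}(\taa_{l+1},\dots,\taa_{n}).
\]
In the second sum $\ff$ simply commutes past the inserted~$m_{l}$, producing $f_{n-l+1}(\dots,m_{l}(\dots),\dots)$ because $\ff\,h_{n-l+1}=f_{n-l+1}$. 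Since $\ff$ is applied from the left without permuting variables, the $\eqKS$-signs are unchanged, and the result is exactly the defining relation~\eqref{eq:def-ainf-map} of an $\Ainf$-morphism~$\BB(A,A,A'')\Rightarrow A''$. The case~$n=1$ just records that $f_{1}=\ff$ is a chain map.

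Next I would verify that $f$ is augmented and strictly unital. For the unit $\tBBone=1\otimes\BBone\otimes1$ one has $f_{1}(\tBBone)=\ff(\tBBone)=1_{A''}$. Using \Cref{thm:S} one checks $h(\taa,\tBBone)=0$ (because $S(\tBBone)=0$) and $h(\tBBone,\tbb)=1\otimes S(\tbb)$, whose bar part has length~$\ge1$; feeding these into the recursive definition of~$h_{n}$ and arguing by induction exactly as for~$(m_{n})$ in \Cref{thm:ainf-bar-twosided} shows that $\ff\,h_{n}$ vanishes whenever some argument equals~$\tBBone$, for~$n\ge2$. For the augmentation, $\epsilon_{A''}\,\ff=\epsilon_{\BB(A,A,A'')}$ (using that $A\to A''$ preserves augmentations), so $\epsilon_{A''}\,f_{1}=\epsilon_{\BB(A,A,A'')}$; and $\epsilon_{A''}\,f_{n}=\epsilon_{\BB(A,A,A'')}\,h_{n}=0$ for~$n\ge2$, since every value of~$h_{n}$ arises from an application of~$S$ whose image has positive bar length and is annihilated by the (multiplicative) augmentation of~$\BB(\kk,A,A'')$.

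Finally, for the composition I would combine this with the strict augmented $\Ainf$-morphism $g\colon\BB(A',A,A'')\Rightarrow\BB(\AA,\AA,\AA)$ obtained from the naturality of the maps~$m_{n}$ in the hga morphisms $A'\to\AA$, $A\to\AA$ and $A''\to\AA$. Since $g$ is strict, the $\Ainf$-composite is $(f\circ g)_{n}=f_{n}\,g_{1}^{\otimes n}$, which is again augmented because both factors are; and $(f\circ g)_{1}=\ff\,g_{1}$ sends $a'\otimes\aa\otimes a''$ to $a'\,\epsilon(\aa)\,a''\in\AA$ because $g_{1}$ preserves the bar augmentation, so the composite extends~\eqref{eq:BAAA-A}. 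The only genuinely delicate point is the strict unitality for arguments in interior positions, which requires the same inductive bookkeeping as in \Cref{thm:ainf-bar-twosided}; everything else is a formal consequence of \Cref{thm:d-h}.
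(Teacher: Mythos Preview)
Your proposal is correct and follows essentially the same approach as the paper: apply the chain map~$\ff$ to both sides of \Cref{thm:d-h}, use that $\ff$ is a right $A''$-module morphism to convert the first sum into the product term of~\eqref{eq:def-ainf-map}, and then verify the augmentation and unit conditions by observing that the bar part of~$S\,h_{n-1}(\dots)$ has positive length. In fact you supply more detail than the paper on the strict-unitality verification (the cases $\taa_{1}=\tBBone$, $\taa_{n}=\tBBone$, and interior~$\tBBone$), which the paper's proof leaves implicit.
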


By abuse of notation, we write this composition of maps again as~\(f\).

\begin{proof}
  Applying the chain map~\(\ff\) to both sides of the identity from \Cref{thm:d-h} confirms that we have
  an \(\Ainf\)-morphism~\(f\colon \BB(A,A,A'')\Rightarrow A''\). It satisfies
  \begin{equation}
    \epsilon(f_{1}(\taa)) = \epsilon\bigl(a'\,\epsilon(\aa)\,a''\bigr) = \epsilon(a')\,\epsilon(\raa) = \epsilon(\taa)
  \end{equation}
  for~\(\taa\in \BB(A,A,A'')\). The elements in the image of~\(\rbb = S\,h_{n-1}(\taa_{1},\dots,\taa_{n})\)
  have length at least~\(1\) for~\(n\ge2\), so that
  \begin{equation}
    \epsilon(f_{n}(\taa_{1},\dots,\taa_{n})) = \epsilon(\ff(a'_{1}\otimes\raa_{1}\cdot \rbb)) = \epsilon(a')\,\epsilon(\raa)\,\epsilon(\rbb)
  \end{equation}
  vanishes. Hence the morphism~\(f\) is augmented.
\end{proof}

\section{The Eilenberg--Moore theorem}
\label{sec:e-m}

We write \(C^{*}(-)\) for normalized singular cochains and \(H^{*}(-)\) for singular cohomology,
both taken with coefficients in~\(\kk\). The augmentations on the cochain complexes are induced
by the inclusion of some chosen base points.

The Eilenberg--Moore theorem describes the cohomology of the pull-back~\(\gE\) of a fibre bundle
or Serre fibration~\(E\to B\) along a map~\(g\colon X\to B\),
\begin{equation}
  \label{eq:pull-back}
  \begin{tikzcd}
    \gE \arrow{d}[left]{\gp} \arrow{r}{\tilde{g}} & E \arrow{d}{p} \\
    X \arrow{r}{g} & B \mathrlap{.}
  \end{tikzcd}
\end{equation}
A homological version appeared in Eilenberg--Moore's original article~\cite[Thm.~12.1]{EilenbergMoore:1966}
for simply connected~\(B\). For field coefficients, the coalgebra structure on~\(H(\gE)\) was also
described there~\cite[Sec.~18]{EilenbergMoore:1966}.
Assuming field coefficients throughout, a cohomological version was given by Smith~\cite[Sec.~I.3]{Smith:1967}.
It is largely reproduced by McCleary~\cite[Sec.~7.1~\&~7.2]{McCleary:2001}.\footnote{%
  Note, however, that there is a mistake in the proof of the Eilenberg--Moore theorem in~\cite{McCleary:2001}: 
  Contrary to what is claimed in the proof of~\cite[Thm.~7.14]{McCleary:2001},
  the filtrations on~\(C^{*}(B)\) and~\(C^{*}(E)\) are not compatible because a (singular!) \(q\)-cochain on~\(B\)
  may not vanish on the \((q-1)\)-skeleton of~\(B\). Presumably, the ``Serre filtration'' used by Smith
  in the proof of~\cite[Prop.~I.3.2]{Smith:1967} did not mean the filtration via skeleta,
  but Serre's original one~\cite[Sec.~II.4]{Serre:1951} applied to singular simplices instead of cubes.
  This would be the same as the ``geometric filtration'' of~\cite[Sec.~11]{EilenbergMoore:1966}.}
Instead of simple connectedness, the version given by Gugenheim--May~\cite[Thm.~3.3, Cor.~3.5]{GugenheimMay:1974}
only assumes the fundamental group of~\(B\) to operate trivially on the cohomology of the fibre of~\(E\to B\).
It also allows more general rings than fields.
In a homological setting, Gugenheim~\cite[Sec.~6.3]{Gugenheim:1972} has given a proof based on the twisted Eilenberg--Zilber theorem,
see also \cite[Prop.~8.3]{Franz:homog} for a similar argument.

All approaches mentioned so far lead to a complex which is shown to be quasi-isomorphic to the (co)chains
on~\(\gE\). This is what we are interested in because we want to apply our \Cref{thm:ainf-bar-twosided}.
There are also other, more geometric proofs of the Eilenberg--Moore theorem that lead directly to a spectral sequence.
See~\cite[Sec.~8.3]{McCleary:2001} for a discussion.

\begin{theorem}
  \label{thm:em-ainf}
  Let \(E\to B\) be a Serre fibration with fibre~\(F\), and let \(\gE\) be the pull-back of~\(E\) via a map~\(g\colon X\to B\).
  Assume the following: \(\kk\) is a principal ideal domain, all spaces and maps are pointed,
  \(B\) and~\(X\) are path-connected, \(\pi_{1}(B)\) acts trivially on~\(H^{*}(F)\), and also one of the following:
  \(H^{*}(F)\) is degreewise finitely generated over~\(\kk\), or both~\(H^{*}(B)\) and~\(H^{n}(X)\) are degreewise finitely generated over~\(\kk\).
  Then the morphism of augmented \(\Ainf\)-algebras
  \begin{equation*}
    f\colon \BB\bigl(C^{*}(X),C^{*}(B),C^{*}(E)\bigr) \Rightarrow C^{*}(\gE)
  \end{equation*}
  (where the target is an augmented dga) is a quasi-isomorphism.
\end{theorem}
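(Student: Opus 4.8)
The plan is to reduce the assertion to the classical Eilenberg--Moore theorem, the $\Ainf$-refinement being automatic from the machinery already developed. First I would apply \Cref{thm:ainf-bar-twosided} and \Cref{thm:augm-ainf} to the three hgas $C^{*}(X)$, $C^{*}(B)$, $C^{*}(E)$ (with structure maps $g^{*}$ and $p^{*}$) and the additional hga $\AA=C^{*}(\gE)$ (with maps $\gp^{*}$ and $\tilde g^{*}$). These four maps form a commuting square because the pull-back square~\eqref{eq:pull-back} commutes and the hga structure on singular cochains is natural, so the hypotheses of \Cref{thm:augm-ainf} are met and we obtain an augmented $\Ainf$-morphism $f$ whose linear term $f_{1}$ is the canonical map~\eqref{eq:BAAA-A}, which is exactly~\eqref{eq:intro:def-f1}. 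By the definition of a quasi-isomorphism of $\Ainf$-algebras in \Cref{sec:ainf}, it then suffices to prove that $f_{1}$ is a quasi-isomorphism of cochain complexes.

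Next I would set up the identification of the source cohomology with a differential torsion product. Since $\kk$ is a principal ideal domain and the normalized cochains $C^{*}(-)=\Hom_{\kk}(C_{*}(-),\kk)$ are torsion-free over~$\kk$ (each is a product of copies of~$\kk$, as $C_{*}(-)$ is $\kk$-free, and torsion-freeness passes to products over a domain), the discussion in \Cref{sec:diff-tor} applies verbatim and yields a natural isomorphism
\begin{equation*}
  H^{*}\bigl(\BB(C^{*}(X),C^{*}(B),C^{*}(E))\bigr)\cong\Tor_{C^{*}(B)}\bigl(C^{*}(X),C^{*}(E)\bigr)
\end{equation*}
that is compatible with the augmentations to $H^{*}\bigl(C^{*}(X)\otimes_{C^{*}(B)}C^{*}(E)\bigr)$.

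With this in hand I would invoke the classical theorem. Under the stated hypotheses---$\kk$ a principal ideal domain, all spaces and maps pointed, $B$ and~$X$ path-connected, $\pi_{1}(B)$ acting trivially on $H^{*}(F)$, and the stated finite-generation dichotomy---the Gugenheim--May form of the Eilenberg--Moore theorem \cite[Thm.~3.3, Cor.~3.5]{GugenheimMay:1974} asserts that the edge map $\Tor_{C^{*}(B)}(C^{*}(X),C^{*}(E))\to H^{*}(\gE)$ is an isomorphism. Since $f_{1}$ is the augmentation-type map~\eqref{eq:intro:def-f1}, under the isomorphism of the previous paragraph the map $H^{*}(f_{1})$ is precisely this edge map; hence $f_{1}$ is a quasi-isomorphism, and therefore $f$ is a quasi-isomorphism of $\Ainf$-algebras.

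The hard part is the last matching step: the cited versions of the classical theorem are usually phrased through a chosen resolution (or the one-sided bar construction in the case $X=\mathrm{pt}$) rather than the two-sided bar construction used here, so one must verify that the classical quasi-isomorphism really corresponds to $H^{*}(f_{1})$. This is where the naturality of the comparison isomorphism recalled in \Cref{sec:diff-tor}, together with its compatibility with the augmentation to $C^{*}(\gE)$, does the work, allowing one to transport the classical isomorphism along the natural maps between models while tracking the map to the cochains of the pull-back. Confirming that each branch of the finite-generation dichotomy supplies the convergence and Künneth hypotheses required by the cited theorem is routine bookkeeping that I would carry out but not belabor.
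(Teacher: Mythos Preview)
Your proposal is correct and follows essentially the same approach as the paper: construct the $\Ainf$-morphism~$f$ via \Cref{thm:augm-ainf}, then argue that its base component~$f_{1}$ is a quasi-isomorphism by invoking the Gugenheim--May form of the Eilenberg--Moore theorem. The paper's proof is terser, outsourcing the verification that $f_{1}$ is a quasi-isomorphism (including the matching step you flag as the hard part) to \cite[Ch.~3]{GugenheimMay:1974} together with the discussion in the proof of~\cite[Thm.~A.27]{CarlsonFranz}, whereas you sketch that identification explicitly via the differential-torsion-product comparison of \Cref{sec:diff-tor}.
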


Recall from \Cref{sec:diff-tor} that \(\BB(C^{*}(X),C^{*}(B),C^{*}(E))\) computes
the differential torsion product~\(\Tor_{C^{*}(B)}(C^{*}(X),C^{*}(E))\)
since the cochain algebra are torsion-free over the principal ideal domain~\(\kk\).

\begin{proof}
  From~\eqref{eq:pull-back} we get the commutative diagram of augmented dgas
  \begin{equation}
    \begin{tikzcd}
      C^{*}(\gE) & C^{*}(E) \arrow{l}[above]{\tilde{g}^{*}} \\
      C^{*}(X) \arrow{u}{\gp^{*}} & C^{*}(B) \arrow{l}[above]{g^{*}} \arrow{u}[right]{p^{*}} \mathrlap{.}
    \end{tikzcd}
  \end{equation}
  By results of Gugenheim--May~\cite[Ch.~3]{GugenheimMay:1974}, the chain map
  \begin{equation*}
    f_{1}\colon \BB\bigl(C^{*}(X),C^{*}(B),C^{*}(E)\bigr) \to C^{*}(\gE)
  \end{equation*}
  is a quasi-isomorphism, see the discussion in the proof of~\cite[Thm.~A.27]{CarlsonFranz}.
  According to \Cref{thm:augm-ainf}, \(f_{1}\) extends to the morphism~\(f\) of augmented \(\Ainf\)-algebras.
\end{proof}

\begin{remark}
  \label{rem:transfer}
  As mentioned in the introduction, the mere existence of an \(\Ainf\)-structure on the two-sided
  bar construction as well as that of an \(\Ainf\)-extension of the map~\(f_{1}\colon \BB(C^{*}(X),C^{*}(B),C^{*}(E))\to C^{*}(\gE)\)
  follow from standard techniques if one assume field coefficients:
  Since \(f_{1}\) is a homotopy equivalence in this case, \cf~\cite[Prop.~II.4.3]{Dold:1980},
  one can transfer the dga structure on~\(C^{*}(\gE)\) to an \(\Ainf\)-structure
  on the two-sided bar construction and extend \(f_{1}\) to an \(\Ainf\)-morphism, see~\cite{Markl:2006}.
  The result depends on the chosen homotopy inverse~\(k\) to~\(f_{1}\) and on the homotopy
  connecting~\(f_{1}k\) to the identity map on~\(C^{*}(\gE)\). The drawbacks of this approach have been pointed out
  in the introduction already.
\end{remark}

\begin{corollary}
  \label{thm:iterated-bar}
  In addition to the hypotheses of \Cref{thm:em-ainf} assume that the pull-back~\(\gE\) is simply connected.
  Then the cohomology of the iterated bar construction
  \begin{equation*}
    \BB\BB\bigl(C^{*}(X),C^{*}(B),C^{*}(E)\bigr)
  \end{equation*}
  is isomorphic to the cohomology of the loop space~\(\Omega\gE\) as a graded \(\kk\)-module.
\end{corollary}

\begin{proof}
  The bar construction~\(\BB C^{*}(\gE)\) computes the cohomology of~\(\Omega\gE\).
  (This of course follows from \Cref{thm:em-ainf}, applied to the path fibration of~\(\gE\).)
  We get a quasi-iso\-mor\-phism between this bar construction and the iterated one displayed above
  by applying \(\BB\) to the \(\Ainf\)-quasi-isomorphism~\(f\) from \Cref{thm:em-ainf}.
  That \(\BB f\) is again a quasi-isomorphism can shown by the same argument
  as for dga maps: The filtration of each bar construction by the length of elements
  is exhaustive and leads to a cohomological spectral sequence in the left half-plane.
  The map induced by~\(\BB f\) between the first pages is an isomorphism, hence so is \(H^{*}(\BB f)\).
\end{proof}

\begin{remark}
  \label{rem:fresse}
  If \(A\) is a cdga, then so is its bar construction~\(\BB A\). Hence one can form the \(n\)-th iterated bar construction~\(\BB^{n}A\)
  for any~\(n\ge0\). Fresse has shown that one can extend the functor~\(A\mapsto\BB^{n}A\) to the category of~\(E_{n}\)-algebras,
  but not further \cite[p.~751 \&~Thm.~5.5]{Fresse:2011}. 
  An \(E_{n}\)-algebra is an algebra over a dg~operad equivalent
  to the chain operad of little \(n\)-cubes. Since hgas are a special kind of \(E_{2}\)-algebras (\cf~\cite[\S 1.6.6]{BergerFresse:2004}),
  \Cref{thm:iterated-bar} is in line with Fresse's result.
\end{remark}

\section{More preliminaries}
\label{sec:prelim-2}

\subsection{Shc algebras}

Let \(A\) be an augmented dga. We say that \(A\) is a \newterm{strongly homotopy commutative (shc)} algebra
if the multiplication map~\(\mu_{A}\colon A\otimes A\to A\)
extends to an \(\Ainf\)-map~\(\Phi\colon A\otimes A\Rightarrow A\) in the sense that \(\Phi_{1}=\mu_{A}\).
There are stronger definitions of an shc algebra (see~\cite[Sec.~1]{Franz:hgashc}), but we will not need those.

Assume that \(A\) is an hga.
A natural shc structure~\(\Phihga\) on~\(A\) was given in~\cite[Prop.~4.1]{Franz:hgashc} by the formula
\begin{equation}
  \Phihga_{n} = (-1)^{n-1}\sum_{\substack{j_{1}+\dots+j_{n}\\=n-1}} \Phi_{\jj}
\end{equation}
where the sum is over all decompositions~\(\jj=(j_{1},\dots,j_{n})\) of~\(n-1\) into \(n\)~non-neg\-a\-tive integers 
such that
\begin{equation}
  \label{eq:cond-j}
  \forall\, 1\le s\le n \qquad j_{1}+\dots+j_{s} < s,
\end{equation}
and
\begin{multline}
  \label{eq:def-Phihga}
  \Phi_{\jj}(a_{1}\otimes b_{1},\dots,a_{n}\otimes b_{2}) \eqKS
  E_{j_{1}}(a_{1};b_{1},\dots,b_{j_{1}}) \\* \cdot E_{j_{2}}(a_{2};b_{j_{1}+1},\dots,b_{j_{1}+j_{2}})
  \cdots E_{j_{n}}(a_{n};b_{n-j_{n}},\dots,b_{n-1})\,b_{n}
\end{multline}
for elements~\(a_{1}\otimes b_{1}\),~\dots,~\(a_{n}\otimes b_{n}\in A\otimes A\).
The condition~\eqref{eq:cond-j} means that the variables~\(b_{j}\) appearing in an \(E\)-term in~\eqref{eq:def-Phihga}
together with a variable~\(a_{i}\) have indices~\(j<i\).
It implies \(j_{1}=0\), so that the leading factor in~\eqref{eq:def-Phihga} is simply \(E_{0}(a_{1})=a_{1}\).

As shown in~\cite[Prop.~4.3]{Franz:hgashc},
this shc structure has the property that its composition with the shuffle map~\eqref{eq:def-shuffle},
\begin{equation}
  \BB A\otimes \BB A \xrightarrow{\shuffle} \BB(A\otimes A) \xrightarrow{B\Phihga} \BB A,
\end{equation}
is the product in~\(\BB A\) corresponding to the hga structure on~\(A\).

\begin{remark}
  \label{rem:m2-hga}
  In terms of this shc structure, the Carlson--Franz product~\(m_{2}\) on a two-sided bar construction~\(\BB(A',A,A'')\)
  of hgas can be given as
  \begin{equation}
    \label{eq:def-m2-orig}
    m_{2} = \BB(\Phihga,\Phihga,\Phihga)\,\shuffle,
  \end{equation}
  where \(\BB(\Phihga,\Phihga,\Phihga)\) is a map of the form~\eqref{eq:ainf-twosidedbar}  
  and \(\shuffle\) the shuffle map~\eqref{eq:def-shuffle-twosided}.
  This is in fact its original definition of~\(m_{2}\), see~\cite[eq.~(A.3)]{CarlsonFranz}.
  The symmetry between~\(A'\) and~\(A''\) in the expression~\eqref{eq:def-m2-orig} is lost
  in the explicit formula~\eqref{eq:def-m2} for~\(m_{2}\). This is caused by the asymmetrical nature of the condition~\eqref{eq:EE-length-2}
  for the twisting cochain~\(\EE\), leading to an asymmetry between the arguments of~\(\Phihga\).
\end{remark}

\subsection{The transpose}

We write the dual of a complex~\(A\) as~\(A^{\vee}=\Hom(A,\kk)\).
If \(A\) is a (non-negatively graded) homological complex, then \(A^{\vee}\) is a (non-positively graded) cohomological one.
For example, the dual~\(C(X)^{\vee}\) of the normalized singular chain complex~\(C(X)\) of a space~\(X\)
is the normalized singular cochain complex~\(C^{*}(X)\).

For any map~\(f\colon A\to B\) between complexes we define its transpose
\begin{equation}
  f^{*}\colon B^{\vee} \to A^{\vee},
  \qquad
  f^{*}(\beta) = (-1)^{\deg{f}\deg{\beta}}\,\beta\,f. 
\end{equation}
Comparison with~\eqref{eq:diff-map} shows
\begin{equation}
  \label{eq:transp-diff}
  d_{C^{\vee}} = - (d_{C})^{*}.
\end{equation}

Given another map~\(g\colon B\to C\), one has
\begin{equation}
  \label{eq:transp-swap}
  (g\,f)^{*} = (-1)^{\deg{g}\deg{f}}\,f^{*} g^{*}.
\end{equation}
Moreover, for maps~\(f_{i}\colon A_{i}\to B_{i}\), \(i=1\),~\(2\), the following diagram commutes:
\begin{equation}
  \label{eq:transp-tensor}
  \begin{tikzcd}
    B_{1}^{\vee}\otimes B_{2}^{\vee} \arrow{d}{f_{1}^{*}\otimes f_{2}^{*}} \arrow{r} & (B_{1}\otimes B_{2})^{\vee} \arrow{d}{(f_{1}\otimes f_{2})^{*}} \\
    A_{1}^{\vee}\otimes A_{2}^{\vee} \arrow{r} & (A_{1}\otimes A_{2})^{\vee} \mathrlap{.}
  \end{tikzcd}
\end{equation}

The dual~\(C^{\vee}\) of a (co-augmented) dgc~\(C\) is an (augmented) dga with multiplication
\begin{equation}
  \mu_{C^{\vee}}\colon C^{\vee}\otimes C^{\vee} \to (C\otimes C)^{\vee} \xrightarrow{\Delta^{*}} C^{\vee}.
\end{equation}
Similarly, the transpose of a morphism~\(f\colon C\to D\) of (co-augmented) dgcs is a morphism of (augmented) dgas.

We fix the isomorphism of complexes~\((\desusp A)^{\vee}\cong\susp A^{\vee}\) that makes the diagram
\begin{equation}
  \begin{tikzcd}
    (\desusp A)^{\vee} \arrow{rr}{\cong} \arrow{dr}[below left,pos=0.4]{(\desusp)^{*}} & & \susp A^{\vee} \arrow{dl}{\desusp} \\
    & A^{\vee}
  \end{tikzcd}
\end{equation}
commute. Under this isomorphism we have
\begin{equation}
  \label{eq:transp-susp-desusp}
  (\desusp_{A})^{*} = \desusp_{A^{\vee}}
  \qquad\text{and}\qquad
  (\susp_{A})^{*} = -\susp_{A^{\vee}}
\end{equation}
where the second identity follows from~\eqref{eq:transp-swap}.
(In particular, it is not possible to have a~``\(+\)'' in both equations.)
This choice of signs will be important in what follows.

\subsection{The cobar construction}
\label{sec:cobar}

Let \(C\) be a coaugmented dgc with augmentation ideal~\(\bar C\). The decomposition~\(C=\bar C\oplus\kk\)
gives a canonical projection~\(\pi=\pi_{C}\colon C\twoheadrightarrow\bar C\) besides the inclusion~\(\eta=\eta_{C}\colon\bar C\hookrightarrow C\).

The cobar construction of~\(C\) is the augmented dga
\begin{equation}
  \Cobar C = \bigoplus_{k\ge0} \Cobar_{k}C
  \qquad\text{with}\qquad
  \Cobar_{k}C = (\desusp\bar C)^{\otimes k} \,;
\end{equation}
elements of~\(\Cobar_{k}C\) are written in the form~\(\CobarEl{c_{1}|\dots|c_{k}}\).
Multiplication is concatenation of elements; the unit element is~\(\CobarEl{}=\BBone\in\Cobar_{0}C=\kk\).
The augmentation~\(\epsilon\colon\Cobar C\to\kk\) is the identity on~\(\Cobar_{0}C=\kk\) and zero
on the components~\(\Cobar_{k}C\) with~\(k>0\).

The differential on~\(\Cobar C\) is the unique derivation given on~\(\Cobar_{1}C\) by
\begin{equation}
  d\CobarEl{c} = -\CobarEl{dc} + (\desusp\otimes\desusp)\,\bar\Delta\,c
\end{equation}
where
\begin{equation}
  \bar\Delta\colon \bar C \hookrightarrow C  \xrightarrow{\Delta} C \otimes C \twoheadrightarrow \bar C\otimes\bar C
\end{equation}
is the reduced diagonal.

The canonical map
\begin{equation}
  \BB(C^{\vee}) \to (\Cobar C)^{\vee}
\end{equation}
given by the component maps
\begin{equation}
  B_{k}(C^{\vee}) = (\susp\,\bar C^{\vee})^{\otimes k} \cong \bigl((\desusp \bar C)^{\vee}\bigr)^{\otimes k}
  \to \bigl((\desusp \bar C)^{\otimes k}\bigr)^{\vee} = (\Cobar_{k} C)^{\vee}
\end{equation}
is a morphism of coaugmented dgcs. To see that we get a chain map, we note that
by~\eqref{eq:transp-swap},~\eqref{eq:transp-tensor} and~\eqref{eq:transp-susp-desusp} we have
\begin{equation}
  \bigl((\desusp\otimes\desusp)\,\bar\Delta\,\susp\bigr)^{*}
  = \susp^{*}\,\bar\Delta^{*}\,\bigl((\desusp)^{*}\otimes(\desusp)^{*}\bigr)
  = -\susp\,\bar\mu\,(\desusp\otimes\desusp),
\end{equation}
where \(\bar\mu\colon\colon \bar C^{\vee}\otimes \bar C^{\vee}\to \bar C^{\vee}\)
denotes the reduced multiplication map of the augmented dga~\(C^{\vee}\).
This expression is the negative of the part~\(B_{2}C^{\vee}\to B_{1}C^{\vee}\) of the differential on~\(BC^{\vee}\),
as required by the identity~\eqref{eq:transp-diff}.

\subsection{\texorpdfstring{\(\Ainf\)}{A∞}-morphisms between dgcs}

Let \(C\) and~\(D\) be coaugmented dgcs, and let
\begin{equation}
  G\colon \Cobar C \to \Cobar D
\end{equation}
be a morphism of augmented dgas.
This morphism is determined by the maps
\begin{multline}
  \qquad
  g_{n}\colon C \stackrel{\pi}{\twoheadrightarrow} \bar C \xrightarrow{\desusp} \desusp \bar C = \Cobar_{1}C \hookrightarrow \Cobar C \\
  \xrightarrow{G} \Cobar D \twoheadrightarrow \Cobar_{n} D = (\desusp \bar D)^{\otimes n} \xrightarrow{\susp^{\otimes n}} \bar D^{\otimes n} \xrightarrow{\eta^{\otimes n}} D^{\otimes n}
  \qquad
\end{multline}
of degree~\(n-1\) for~\(n\ge2\). The map~\(g_{1}\colon C\to D\) is defined analogously, but with the map~\(\iota_{D}\,\epsilon_{C}\colon C\to D\) added,
\begin{equation}
  g_{1}(c) = \susp\, G(\CobarEl{\bar c}) + \iota_{D}\,\epsilon_{C}(c).
\end{equation}
Together this family of maps forms an \(\Ainf\)-coalgebra morphism~\(g\colon C\Rightarrow D\), and we write \(G\) as~\(\Cobar g\colon \Cobar C\to\Cobar D\).

The components of~\(g\) satisfy the identity
\begin{equation}
  d(g_{n}) = \sum_{i=1}^{n-1}(-1)^{i}\,(g_{i}\otimes g_{n-i})\,\Delta
  +\sum_{i=1}^{n-2}(-1)^{i}\,\bigl(1^{\otimes i}\otimes\Delta\otimes 1^{n-i-2}\bigr)\,g_{n-1}
\end{equation}
for all~\(n\ge1\). Moreover, \(g_{1}(1_{C})=1_{D}\) and \(g_{1}(\bar C)\subset\bar D\).
For~\(n\ge2\) we have \(g_{n}(1_{C})=0\) and the image of~\(g_{n}\) lies in~\(\bar C^{\otimes n}\subset C^{\otimes n}\).
Since \(\Cobar C\) is the direct sum of the terms~\(\Cobar_{k}C\), we also have that for any~\(c\in C\) only finitely many terms~\(g_{n}(c)\) are non-zero.
Conversely, any family of maps
\begin{equation}
  g_{n}\colon C\to D^{\otimes n}
\end{equation}
of degree~\(n-1\) for~\(n\ge1\) having these properties defines an \(\Ainf\)-coalgebra morphism~\(g\colon C\Rightarrow D\).

Given an \(\Ainf\)-coalgebra morphism~\(g\colon C\Rightarrow D\), the transposes
\begin{align}
  (g_{n})^{*} &= \bigl(\eta^{\otimes n}\,\susp^{\otimes n}\,\Cobar g\,\desusp\,\pi\bigr)^{*}
  = (-1)^{n}\,\pi^{*}\,(\desusp)^{*}\,(\Cobar g)^{*}\,(\susp^{\otimes n})^{*}\,(\eta^{*})^{\otimes n} \\
  \notag &= \eta_{C^{\vee}}\,\desusp\,(\Cobar g)^{*}\,\susp^{\otimes n}\,(\pi_{D^{\vee}})^{\otimes n}
\end{align}
of the components of~\(g\)
are the components~\((g^{*})_{n}\) of a strictly unital \(\Ainf\)-algebra morphism~\(g^{*}\colon D^{\vee}\Rightarrow C^{\vee}\).
(Here we are again using the formulas~\eqref{eq:transp-swap},~\eqref{eq:transp-tensor} and~\eqref{eq:transp-susp-desusp},
and we have omitted the separate definition for~\(n=1\).)
Moreover, we obtain a commutative diagram
\begin{equation}
  \begin{tikzcd}
    \BB(D^{\vee}) \arrow{d}{\BB(g^{*})} \arrow{r} & (\Cobar D)^{\vee} \arrow{d}{(\Cobar g)^{*}} \\
    \BB(C^{\vee}) \arrow{r} & (\Cobar C)^{\vee}\mathrlap{.}
  \end{tikzcd}
\end{equation}

In analogy with the algebra case,
we define an \newterm{shc coalgebra structure} on a coaugmented dgc~\(C\) to be an \(\Ainf\)-coalgebra map
\begin{equation}
  \Psi\colon C \Rightarrow C\otimes C
\end{equation}
whose base component~\(\Psi_{1}\colon C\to C\otimes C\) is the diagonal of~\(C\).

\subsection{Homotopy Gerstenhaber coalgebras}

Let \(C\) be a coaugmented dgc and assume that the augmented dga~\(\Cobar C\) has the structure of a dg~bialgebra,
that is, comes with a diagonal
\begin{equation}
  \Delta_{\Cobar C}\colon \Cobar C \to \Cobar C\otimes\Cobar C
\end{equation}
that is compatible with the dga structure and the augmentation. It is determined by its associated twisting cochain
\begin{equation}
  \EE\colon C \twoheadrightarrow \bar C \xrightarrow{\desusp} \desusp\bar C = \Cobar_{1} C \xrightarrow{\Delta_{\Cobar C}} \Cobar C\otimes\Cobar C,
\end{equation}
which in turn in determined by its compositions with
\begin{multline}
  \qquad
  \rho_{kl}\colon \Cobar C\otimes\Cobar C \twoheadrightarrow \Cobar_{k} C\otimes\Cobar_{l} C = (\desusp\bar C)^{\otimes k} \otimes (\desusp\bar C)^{\otimes l} \\
  \xrightarrow{\susp^{\otimes(k+l)}} \bar C^{\otimes(k+l)} \hookrightarrow C^{\otimes(k+l)}.
  \qquad
\end{multline}
for~\(k\),~\(l\ge0\).
We call \(C\) a \newterm{homotopy Gerstenhaber coalgebra (hgc)} if the composition~\(E^{kl}=\rho_{kl}\,\EE\) vanishes unless \((k,l)=(0,1)\) or~\(k=1\).
One always has \(E^{01}(c)=E^{10}(c)=\bar c\).
For any~\(k\ge0\) we define the \newterm{homotopy Gerstenhaber cooperation}
\begin{equation}
  E^{k}\colon C \to C\otimes C^{\otimes k},
  \qquad
  E^{k}(c) = \begin{cases}
    E^{1k}(c) & \text{if \(k>0\).} \\
    c & \text{if \(k=0\).}
  \end{cases}
\end{equation}
Their transposes define an hga structure on~\(C^{\vee}\), \cf~\cite[Rem.~3.2]{Franz:szczarba2}.

\subsection{Simplicial sets}
\label{sec:simplicial}

The normalized chain complex~\(C(X)\) of any simplicial set~\(X\) is naturally an hgc,
for any coaugmentation~\(\iota\colon\kk\to C(X)\). This includes the normalized
singular chain complexes of topological spaces. It is convenient to define
the hgc operations in terms of interval cut operations, which we briefly recall first.
See \cite[Secs.~1.2, 2.2]{BergerFresse:2004} for a comprehensive treatment.

We call \([i,j]=\{i,i+1,\dots,j\}\) with~\(0\le i\le j\) an interval and write~\([j]=[1,j]\).
Let \(u\colon[r+k]\to[r]\) be a surjection, \(r\ge0\), \(k\ge0\).
We assume \(u\) to be non-degenerate in the sense that \(u(t)\ne u(t+1)\) for all~\(1\le t<r+k\).
The associated interval cut operation for a simplicial set~\(X\),
\begin{equation}
  \AWu{u}\colon C(X) \to C(X)^{\otimes r},
  \qquad x \mapsto \AWu{u}(x),
\end{equation}
is of degree~\(k\) and defined as follows. Let \(x\in X\) be a simplex of dimension~\(m\). For a subdivision
\begin{equation}
  0 = m_{0} \le m_{1} \le \dots \le m_{n+k} = m
\end{equation}
of the interval~\([0,m]\), we define the simplex~\(x_{j}\in X\) with~\(1\le j\le r\) to be the restriction of~\(x\)
to the concatenation of all intervals~\([m_{t},m_{t+1}]\) with label~\(u(t)=j\). (If in this concatenation
the right endpoint of one interval coincided with the left endpoint of the next interval,
then this common endpoint is repeated, forcing the resulting simplex to be degenerate.) Then
\begin{equation}
  \AWu{u}(x) = \sum \pm x_{1}\otimes \dots \otimes x_{r}
\end{equation}
where the sum is over all such decompositions. The sign left unspecified above
is the product of a permutation sign and a position sign, see~\cite[\S 2.2.4]{BergerFresse:2004} for details.

In this language, the hgc cooperations are given by the interval cut operations
\begin{equation}
  \label{eq:def-hg-chains}
  E^{k} = \AWu{(1,2,1,3,1,\dots,1,k+1,1)},
\end{equation}
and their transposes~\((E^{k})^{*}\) are the operations~\(E_{k}\) for the hga~\(C^{*}(X)\), \cf~\cite[Sec.~8.2]{Franz:homog}.
Note that when applying \(E^{k}\) to a simplex,
the intervals labelled \(2\),~\dots,~\(k+1\) must have length at least~\(1\)
for otherwise the simplex corresponding to the label~\(1\) would be degenerate.
Hence \(E^{k}\) vanishes on (normalized) chains of degree~\(<k\).

Let us write \(\mu^{(n)}\) and~\(\Delta^{(n)}\) for the \(n\)-fold iteration of the product and coproduct, respectively.
Since \(\Phi_{\jj}\) is, up to a reordering of the tensor factors in the argument, equal to
\begin{equation}
  \mu^{(n+1)}\bigl(E_{j_{1}}\otimes\dots\otimes E_{j_{n}}\otimes 1\bigr),
\end{equation}
we have \(\Phi_{\jj}=\AWu{u(\jj)}^{*}\) for the surjection
\begin{multline}
  \label{eq:u-a-b}
  \qquad
  u(\jj) = (v_{1},w_{1},v_{1},\dots,v_{1},w_{j_{1}},v_{1}, \\
  v_{2},w_{j_{1}+1},v_{2},\dots,v_{2},w_{j_{1}+j_{2}},v_{2}, \\
  \dots,v_{n}, w_{n-j_{n}},v_{n},\dots,v_{n},w_{n-1},v_{n},w_{n})
  \qquad
\end{multline}
of length~\(3n-1\) and degree~\(n-1\),
where we have set \(v_{t}=2t-1\) and~\(w_{t}=2t\) for~\(1\le t\le n\). For example,
\begin{equation}
  u((0,0,2,1)) = (1,3,5,2,5,4,5,7,6,7,8).
\end{equation}

Hence, if we define
\begin{equation}
  \Psihgc_{n} = (-1)^{n-1} \sum_{\substack{j_{1}+\dots+j_{n}\\=n-1}} \Psi_{\jj}
  \qquad\text{with}\qquad
  \Psi_{\jj}=\AWu{u(\jj)},
\end{equation}
we have
\begin{equation}
  \Phi_{\jj} = \Psi_{\jj}^{*}
  \qquad\text{and}\qquad
  \Phihga_{n} = (\Psihgc_{n})^{*}.
\end{equation}
As remarked above, the surjection~\(u(\jj)\) associated to a decomposition~\(\jj\) of~\(n-1\) has degree~\(n-1\).
This implies that \(\Psi_{\jj}\) vanishes on simplices of degree~\(<n-1\), and so does \(\Psihgc_{n}\).
The maps~\(\Psihgc_{n}\) therefore assemble to an shc coalgebra structure
\begin{equation}
  \Psihgc\colon C(X) \Rightarrow C(X)\otimes C(X)
\end{equation}
whose dual  is the \(\Ainf\)-algebra structure~\(\Phihga\) on the hga~\(C^{\vee}\) given by~\eqref{eq:def-Phihga}.\footnote{%
  Defining \(\Psi_{\jj}\) directly in terms of the hga cooperations gives
  an \(\Ainf\)-coalgebra structure on any hgc if one assume that \(E^{k}(c)\) vanishes for~\(\deg{c}<k\).
  The proof is analogous to the hga case carried out in~\cite[Prop.~4.1]{Franz:hgashc}.}
(If \(\Psihgc\) were not an \(\Ainf\)-coalgebra map, then its transpose would fail to be an \(\Ainf\)-algebra map
because sending a map between free complexes to its transpose is injective.)

\subsection{Shc structures via contractions}
\label{sec:contractions}

Let \(C\) and~\(D\) be complexes. A \newterm{contraction} is a triple~\((f,g,h)\),
\begin{equation}
  C \mathop{\leftrightarrows}^{g}_{f} D \; \selfmapright \, h
\end{equation}
where \(f\colon C\to D\) and~\(g\colon D\to C\) are chain maps
and \(h\colon D\to D\) a homotopy such that
\begin{equation}
  g\,f = 1_{C},
  \qquad
  d(h) = f\,g - 1_{D}
  \qquad
  h\,f = 0,
  \qquad
  g\,h = 0,
  \qquad
  h\,h = 0.
\end{equation}
The latter three identities are often called ``side conditions''.

Now assume additionally that \(C\) and~\(D\) are coaugmented dgcs and \(f\) a morphism between them.
In this setting, Gugenheim--Munkholm~\cite[Thm~4.1\(^{*}\)]{GugenheimMunkholm:1974} have given a recursive formula
for an \(\Ainf\)-coalgebra map~\(G\colon D\Rightarrow C\) extending~\(g\), that is, with
\begin{equation}
  G_{1}=g.
\end{equation}
Their recursive formula for~\(n\ge2\) is\footnote{%
  The sign~\((-1)^{n-1}\) in~\eqref{eq:GM-rec-def} is implicit in the formula given in~\cite[p.~26]{GugenheimMunkholm:1974}
  because the map~\(\bar{F}_{p}\) there is, in our notation, a map to \((\desusp\bar{C})^{\otimes p}\) of degree~\(-1\) ,
  not one to~\(\bar C^{\otimes p}\) of degree~\(p-1\).}
\begin{equation}
  \label{eq:GM-rec-def}
  G_{n} = -\sum_{l=1}^{n-1} (-1)^{n-l}\,(G_{l}\otimes G_{n-l})\,\Delta_{D}\,h.
\end{equation}
Stated in this generality, one needs to assume that the finiteness condition for \(\Ainf\)-coalgebra maps holds,
meaning that for each~\(d\in D\), \(G_{n}(d)\) vanishes for large enough~\(n\).
In addition to extending~\(g\), the \(\Ainf\)-coalgebra map~\(G\) is a homotopy inverse to~\(f\).
More precisely, Gugenheim--Munkholm show that
\begin{equation}
  \label{eq:f-G-id}
  f\circ G = 1_{C}
\end{equation}
holds and that \(G\circ f\) is homotopic to~\(1_{D}\) via an explicit \(\Ainf\)-coalgebra homotopy
having certain additional properties. In what follows, we will only need \eqref{eq:f-G-id}.

The Gugenheim--Munkholm construction applies in particular to the Eilenberg--Zilber contraction
\begin{equation}
  \label{eq:ez-contraction}
  C(X)\otimes C(Y) \mathop{\leftrightarrows}^{\AW}_{\shuffle} C(X\times Y) \, \selfmapright \, h
\end{equation}
for two spaces or simplicial sets~\(X\) and~\(Y\).
Here \(f=\shuffle\) is the shuffle map, \(g=\AW\) the Alexander--Whitney map and \(h\) the Eilenberg--Mac\,Lane homotopy,
see for instance~\cite[Sec.~3]{Franz:szczarba1}.
In fact, there are two canonical homotopies one can use. One is the original one
recursively defined by Eilenberg--Mac\,Lane~\cite[eqs.~(2.13)]{EilenbergMacLane:1954};
a non-recursive definition was much later given by Rubio and Morace~\cite[Sec.~3.1]{Rubio:1991}.
An equally good choice is the ``opposite'' homotopy
that exchanges the roles of~\(X\) and~\(Y\) and also those of front on back faces of simplices,
see~\cite[Sec.~3.3]{Rubio:1991},~\cite[Sec.~4]{Franz:szczarba1}.
We are going to use this opposite homotopy, which is defined by
\begin{equation}
  \label{eq:def-h}
  h(x,y) = \!\!\! \sum_{\substack{0\le p+q < n\\(\alpha,\beta)\vdash(p,q+1)}} \!\!\! (-1)^{p+q+(\alpha,\beta)}\,
  \bigl(s_{\beta}\,\partial_{p+1}^{p+q}\,x, s_{p+q+1}\,s_{\alpha}\,\partial_{0}^{p-1}\,y\bigr)
\end{equation}
for~\((x,y)\) in~\(X_{n}\times Y_{n}\), \cf~\cite[eq.~(4.8)]{Franz:szczarba1}.
Here \((\alpha,\beta)\vdash(p,q+1)\) means that \((\alpha,\beta)\) is a \newterm{\((p,q+1)\)-shuffle}, that is,
a partition of the set~\(\{0,\dots,p+q\}\) with \(p\)~elements in~\(\alpha\) and \(q+1\) in~\(\beta\).
Its signature is denoted by~\((-1)^{(\alpha,\beta)}\).
Moreover,
\begin{equation}
  s_{\alpha} = s_{\alpha_{p}} \circ \dots \circ s_{\alpha_{1}}
\end{equation}
for~\(\alpha=\{\alpha_{1}<\dots<\alpha_{p}\}\) is a repeated degeneracy operator and
\begin{equation}
  \partial_{i}^{j} = \partial_{i} \circ \dots \circ \partial_{j}
\end{equation}
for~\(j\le i-1\) a repeated face operator. Note that \(s_{\emptyset}\) and~\(\partial_{i}^{i-1}\) are identity maps.

In the context of the Eilenberg--Zilber contraction
we write the \(\Ainf\)-coalgebra map defined by~\eqref{eq:GM-rec-def} as
\begin{equation}
  \label{eq:GM-XY}
  G = G_{X,Y}\colon C(X\times Y) \Rightarrow C(X)\otimes C(Y).
\end{equation}
That the finiteness condition holds in this case will be a consequence
of our proof of \Cref{thm:Phihga-PhiGM} in the next section.

\section{Comparison of the two shc structures on cochains}
\label{sec:compare-shc}

Let \(\Delta=\Delta_{X}\colon X\to X\times X\) be the diagonal.
Since the composition
\begin{equation}
  C^{*}(X)\otimes C^{*}(X) \to \bigl(C(X)\otimes C(X)\bigr)^{\vee} \xrightarrow{\AW^{*}} C^{*}(X\times X) \xrightarrow{\Delta^{*}} C^{*}(X)
\end{equation}
is the definition of the cup product in~\(C^{*}(X)\), we conclude that the transpose~\(G_{X,X}^{*}\) of
Gugenheim--Munkholm's \(\Ainf\)-coalgebra map~\eqref{eq:GM-XY}
defines an shc algebra structure~\(\PhiGM=\PhiGM_{X}\) on~\(C^{*}(X)\) via the composition\footnote{\label{foot:triv-ext}
  The same construction appears in Munkholm's separate work~\cite[Prop.~4.7]{Munkholm:1974}, based on the transpose
  of the recursive formula~\eqref{eq:GM-rec-def}, \cf~the proof of~\cite[Prop.~2.2]{Munkholm:1974}.
  Note that the homotopy in Munkholm's cohomological contraction (called a `trivialized extension')
  is~\(-h^{*}\) in our notation.}
\begin{equation}
  \label{eq:def-PhiGM}
  \PhiGM_{X}\colon
  C^{*}(X)\otimes C^{*}(X) \to \bigl(C(X)\otimes C(X)\bigr)^{\vee} \xRightarrow{G_{X,X}^{*}} C^{*}(X\times X) \xrightarrow{\Delta^{*}} C^{*}(X).
\end{equation}
(No finiteness condition is needed for cochains, see~\cite[\S 4.1\(_{*}\)]{GugenheimMunkholm:1974}.)

\begin{theorem}
  \label{thm:Phihga-PhiGM}
  Given any simplicial set~\(X\),
  the shc structure on~\(C^{*}(X)\) due to Gugenheim--Munkholm (with homotopy~\eqref{eq:def-h})
  and the one for general hgas agree,
  \begin{equation}
     \PhiGM = \Phihga \colon C^{*}(X)\otimes C^{*}(X) \Rightarrow C^{*}(X).
  \end{equation}
\end{theorem}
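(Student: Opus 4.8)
The plan is to prove the equivalent statement on chains and then dualize. First I would observe that, unwinding~\eqref{eq:def-PhiGM}, the map $\PhiGM$ is the composite of the canonical map $C^{*}(X)\otimes C^{*}(X)\to\bigl(C(X)\otimes C(X)\bigr)^{\vee}$ with the transpose of the $\Ainf$-coalgebra map
\begin{equation*}
  G_{X,X}\circ C(\Delta_{X})\colon C(X)\xrightarrow{C(\Delta_{X})} C(X\times X)\xRightarrow{G_{X,X}} C(X)\otimes C(X),
\end{equation*}
where $C(\Delta_{X})$ is the chain map induced by the diagonal, so that $C(\Delta_{X})^{*}=\Delta^{*}$. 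Since $\Phihga=(\Psihgc)^{*}$ arises in the same way from $\Psihgc$, and since the transpose is injective on maps between free complexes (see the end of \Cref{sec:simplicial}), the theorem reduces to the equality of $\Ainf$-coalgebra maps
\begin{equation*}
  (G_{X,X})_{n}\,C(\Delta_{X}) = \Psihgc_{n}
\end{equation*}
for every $n\ge1$.

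The natural way to get at the left-hand side is to compute the general Gugenheim--Munkholm components $(G_{X,Y})_{n}\colon C(X\times Y)\to(C(X)\otimes C(Y))^{\otimes n}$ first, since the recursion~\eqref{eq:GM-rec-def} expresses $(G_{X,Y})_{n}$ through $(G_{X,Y})_{l}\otimes(G_{X,Y})_{n-l}$ evaluated after the coproduct of $C(X\times Y)$ and the opposite homotopy $h$ of~\eqref{eq:def-h}, and not through the diagonal-restricted maps. The base case is $G_{1}=\AW$. Each of the three ingredients of the recursion---$\AW$, the coproduct, and $h$---is an interval cut operation, so I would argue that~\eqref{eq:GM-rec-def} is an operadic composition in the surjection operad of Berger--Fresse and that, by induction, every $(G_{X,Y})_{n}$ is a $\kk$-linear combination of interval cut operations of degree $n-1$. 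Because an interval cut operation of positive degree $n-1$ vanishes on simplices of dimension $<n-1$, this shows that only finitely many components act non-trivially on a given simplex, which settles the finiteness condition promised after~\eqref{eq:GM-XY}.

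Next I would specialize to $Y=X$ and precompose with $C(\Delta_{X})$; the two-variable interval cut operations restrict along the diagonal to single-variable ones, landing in $C(X)^{\otimes 2n}$ and hence indexed by surjections onto $[2n]$. The content is then the identity $(G_{X,X})_{n}\,C(\Delta_{X})=(-1)^{n-1}\sum_{\jj}\AWu{u(\jj)}=\Psihgc_{n}$, which I would prove by induction on $n$ (the case $n=1$ being the Alexander--Whitney diagonal $\AWu{(1,2)}$, for the unique decomposition $\jj=(0)$). The splitting $n=l+(n-l)$ in~\eqref{eq:GM-rec-def} should correspond exactly to cutting the surjection $u(\jj)$ of~\eqref{eq:u-a-b} into the block for $(j_{1},\dots,j_{l})$ and the block for $(j_{l+1},\dots,j_{n})$, with the extra variable created by $h$ playing the role of the repeated base label that separates consecutive $v$-blocks; the admissibility condition~\eqref{eq:cond-j} selects precisely the non-trivial splittings, and the signs $(-1)^{n-l}$ assemble to the global $(-1)^{n-1}$.

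The main obstacle is the combinatorial heart of the last two steps: pinning down $h$ as the correct interval cut operation and verifying that the operadic composition dictated by~\eqref{eq:GM-rec-def} reproduces exactly the family $\{u(\jj)\}$ with the right coefficients. Here the choice of the \emph{opposite} Eilenberg--Mac\,Lane homotopy~\eqref{eq:def-h} is essential: it is what places the repeated base label in the ``$1$''-slots of the pattern $(1,2,1,3,\dots,1,k+1,1)$ defining the cooperations $E^{k}$, so that the recursion generates the ordered product of $E$-terms in~\eqref{eq:def-Phihga} rather than a reflected variant. The attendant Koszul and desuspension signs (tracked via~\eqref{eq:transp-susp-desusp}) are routine but must be handled carefully to produce the global sign $(-1)^{n-1}$.
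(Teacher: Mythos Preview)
There is a genuine gap. Your key claim that the Eilenberg--Mac\,Lane homotopy~$h$ is an interval cut operation is false, so the recursion~\eqref{eq:GM-rec-def} is \emph{not} an operadic composition in the surjection operad. Interval cut operations $\AWu{u}\colon C(Z)\to C(Z)^{\otimes r}$ are built solely from restrictions of simplices to subintervals; for~$r=1$ the only non-degenerate surjection $u\colon[1+k]\to[1]$ has $k=0$, so the surjection operad contains no operation $C(Z)\to C(Z)$ of positive degree. But $h\colon C(X\times Y)\to C(X\times Y)$ has degree~$+1$ and, as one sees from~\eqref{eq:def-h}, is assembled from degeneracy operators applied \emph{differently} to the two factors of~$X\times Y$. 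It is natural in the pair~$(X,Y)$, not in $X\times Y$ as an abstract simplicial set. Hence your inductive step, and with it the finiteness argument, does not go through as written.

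The paper circumvents this by a different mechanism. Setting $\tilde G_{n}\coloneqq(p_{X}\otimes p_{Y})^{\otimes n}\,\Psihgc_{n}$, it proves the two-variable identity $G_{X,Y,n}=\tilde G_{n}$ by induction, and the heart of the argument is \Cref{thm:GkGl}: the degeneracies inserted by~$h$ force $(\tilde G_{k}\otimes1)\,\Delta\,h_{\alpha,\beta}=0$ for every $1<k<n$, because some $C(X)$- or $C(Y)$-component in the relevant interval cut is inevitably degenerate. Thus in the recursion only the summand $l=1$ (with $G_{1}=\AW$) survives, and one is reduced to computing $(\AW\otimes\tilde G_{n-1})\,\Delta\,h$, which \Cref{thm:GkGl}\,\ref{thm:GkGl-2}--\ref{thm:GkGl-3} does explicitly. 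This also shows why your heuristic of ``cutting $u(\jj)$ at position~$l$'' cannot work directly: different admissible~$\jj$ admit different numbers of cut points~$l$ with $j_{1}+\dots+j_{l}=l-1$, so summing over all~$l$ would miscount; it is precisely the degeneracy argument that eliminates every contribution with~$l>1$.
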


\begin{corollary}
  \label{thm:mult-bar-GM}
  The composition
  \begin{equation*}
    \BB C^{*}(X) \otimes \BB C^{*}(X) \xrightarrow{\shuffle} \BB\bigl(C^{*}(X)\otimes C^{*}(X)\bigr)
    \xrightarrow{\PhiGM} \BB C^{*}(X\times X) \xrightarrow{\BB\Delta^{*}} \BB C^{*}(X)
  \end{equation*}
  is the multiplication on~\(\BB C^{*}(X)\) given by the hga structure of~\(C^{*}(X)\).
\end{corollary}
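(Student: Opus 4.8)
The plan is to read off the corollary directly from \Cref{thm:Phihga-PhiGM} together with the property of $\Phihga$ recorded just after \eqref{eq:def-Phihga}. First I would unwind the middle arrow of the displayed composition. By its definition~\eqref{eq:def-PhiGM}, the shc map $\PhiGM$ is the $\Ainf$-composition of the canonical map $C^{*}(X)\otimes C^{*}(X)\to\bigl(C(X)\otimes C(X)\bigr)^{\vee}$, the Gugenheim--Munkholm $\Ainf$-morphism $G_{X,X}^{*}$ landing in $C^{*}(X\times X)$, and the strict dga map $\Delta^{*}$. Applying the bar functor and using its compatibility with composition of $\Ainf$-morphisms and strict dga maps (recalled in \Cref{sec:ainf}), the part of the corollary's composition following the shuffle is exactly $\BB\PhiGM$, which factors through $\BB C^{*}(X\times X)$ as $\BB(\Delta^{*})\,\BB(G_{X,X}^{*})$ --- precisely the shape written in the statement.

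Next I would invoke \Cref{thm:Phihga-PhiGM} to replace $\PhiGM$ by $\Phihga$. Since the two shc structures agree as $\Ainf$-morphisms $C^{*}(X)\otimes C^{*}(X)\Rightarrow C^{*}(X)$, the induced dgc morphisms coincide, $\BB\PhiGM=\BB\Phihga$. Hence the corollary's composition equals
\begin{equation*}
  \BB C^{*}(X)\otimes \BB C^{*}(X)\xrightarrow{\shuffle}\BB\bigl(C^{*}(X)\otimes C^{*}(X)\bigr)\xrightarrow{\BB\Phihga}\BB C^{*}(X),
\end{equation*}
and this is exactly the composite displayed after~\eqref{eq:def-Phihga}, which by \cite[Prop.~4.3]{Franz:hgashc} is the multiplication on $\BB C^{*}(X)$ coming from the hga structure. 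This finishes the proof.

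Thus all the substance is carried by \Cref{thm:Phihga-PhiGM}, and no genuine computation enters the corollary. The single point that demands care is the bookkeeping of the middle arrow: one must confirm that the factorization through $\BB C^{*}(X\times X)$ is nothing more than the bar functor applied to the $\Ainf$-composition $\Delta^{*}\circ G_{X,X}^{*}$ defining $\PhiGM$, so that functoriality collapses the last two arrows into $\BB\PhiGM$. I expect this to be the only mild subtlety; everything else is formal.
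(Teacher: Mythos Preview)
Your proposal is correct and follows exactly the (implicit) argument the paper has in mind: the paper states the corollary without proof because it is meant to be read off immediately from \Cref{thm:Phihga-PhiGM} together with the property of $\Phihga$ recalled just after~\eqref{eq:def-Phihga} (namely \cite[Prop.~4.3]{Franz:hgashc}). Your only addition is the harmless bookkeeping that the factored display in the corollary, with intermediate target $\BB C^{*}(X\times X)$ followed by $\BB\Delta^{*}$, collapses to $\BB\PhiGM$ via functoriality of the bar construction on $\Ainf$-compositions; this is indeed the one notational point that deserves a sentence, and you handle it correctly.
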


For a \(1\)-reduced simplicial set~\(X\), this latter result was established
by Hess--Parent--Scott--Tonks~\cite[Sec.~5]{HessEtAl:2006}, as least in characteristic~\(2\).\footnote{%
  All signs are ignored in~\cite[Sec.~5]{HessEtAl:2006} ``in the interest of simplifying the notation'' \cite[p.~870]{HessEtAl:2006}.}

The remainder of this section is devoted to a proof of \Cref{thm:Phihga-PhiGM}.
We will deduce the claim from the equivalent identity
\begin{align}
  \label{eq:id-cohom}
  G_{X,Y}^{*} = \Phihga_{X\times Y}\circ\bigl(p_{X}^{*}\otimes p_{Y}^{*}) \colon & C^{*}(X)\otimes C^{*}(Y) \Rightarrow C^{*}(X\times Y) \\
\shortintertext{for all simplicial sets~\(X\),~\(Y\), or rather from the homological identity}
  \label{eq:id-hom}
  G_{X,Y} = (p_{X}\otimes p_{Y})\circ \Psihgc_{X\times Y} \colon & C(X\times Y) \Rightarrow C(X)\otimes C(Y)
\end{align}
of which it is the transpose.

\def\GG{\tilde G}
\def\n{m}
Let \(\n\ge0\). For a partition~\((\alpha,\beta)\vdash(p,q+1)\) with~\(p\),~\(q\in[0,\n]\) we define
\begin{align}
  \label{eq:def-psi}
  h_{\alpha,\beta}\colon X_{\n-q}\times Y_{\n-p} &\to C_{\n+1}(X\times Y), \\
  \notag
  (x,y) &\mapsto (s_{\beta}\,x, s_{p+q+1}\,s_{\alpha}\,y)
\end{align}
so that
\begin{equation}
  h(x,y) = \!\!\! \sum_{\substack{0\le p+q < \n\\(\alpha,\beta)\vdash(p,q+1)}} \!\!\! (-1)^{p+q+(\alpha,\beta)}\,
  h_{\alpha,\beta}\bigl(\partial_{p+1}^{p+q}\,x,\partial_{0}^{p-1}\,y\bigr)
\end{equation}
for~\((x,y)\in X_{\n}\times Y_{\n}\). We also write
\begin{equation}
  \GG_{n} = (p_{X}\otimes p_{Y})^{\otimes n}\,\Psihgc_{n}\colon C(X\times Y) \to \bigl(C(X)\otimes C(Y)\bigr)^{\otimes n}.
\end{equation}
for~\(n\ge1\). Translating the identity~\eqref{eq:id-hom} to this notation, we want to prove
\begin{equation}
  \label{eq:GGn-Gn}
  G_{n} = \GG_{n}
\end{equation}
for all~\(n\ge1\).
Because the components of~\(\Psihgc\) satisfy the finiteness condition for \(\Ainf\)-coalgebra maps,
the identity~\eqref{eq:GGn-Gn}, once established, will in particular show that the family~\(G_{n}\) also satisfies this condition
and therefore defines an \(\Ainf\)-coalgebra map, as asserted in \Cref{sec:contractions}.

We introduce some more notation and terminology. Let \((\alpha,\beta)\) be a \((p,q+1)\)-shuffle.
Then \(\beta\ne\emptyset\), and it can be decomposed into intervals,
that is, written as a disjoint union of \(r\ge1\)~intervals
\begin{equation}
  \beta = [i_{1},j_{1}]\cup[i_{2},j_{2}]\cup\dots\cup[i_{r},j_{r}],
\end{equation}
where
\begin{equation}
  0 \le i_{1} < j_{1}+1 < i_{2} < j_{2}+1 < \dots < i_{r} < j_{r}+1 \le p+q+1.
\end{equation}

Let \(x\) be an \(\n\)-simplex,
and let \(J\subset[0,\n]\) be non-empty.
We write \(x(J)\) for the restriction of~\(x\) to~\(J\), that is,
for \(\partial_{i_{k}}\dots\partial_{i_{1}}x\) where \(\{i_{1}<\dots<i_{k}\} = [0,\n]\setminus J\).
If \(J=[i,j]\) is an interval, we simply write \(\restr{x}{i}{j}\).
Moreover, we say that \(x\) is \newterm{degenerate on the interval~\([i,j+1]\)} if there is a simplex~\(\tilde{x}\)
such that \(x=s_{[i,j]}\tilde{x}\). (This condition is defined to be void for~\(i=j+1\).)

The following result is related to~\cite[Thm.~5.1, Lemma~5.4]{HessEtAl:2006}.

\def\si{i}

\begin{lemma}
  \label{thm:GkGl}
  Let \(\n\ge0\), and let \((\alpha,\beta)\vdash(p,q+1)\) with~\(p\),~\(q\in[0,\n]\).
  For any~\((x,y)\in X_{\n-q}\times Y_{\n-p}\) and any~\(n\ge1\) we have the following:
  \begin{enumroman}
  \item
    \label{thm:GkGl-1}
    For any~\(1< k<n\),
    \begin{equation*}
      (\GG_{k}\otimes 1)\,\Delta\,h_{\alpha,\beta}(x,y) = 0.
    \end{equation*}
  \item
    \label{thm:GkGl-2}
    If \(\beta\) is not an interval, then
    \begin{equation*}
      (\AW\otimes \GG_{n-1})\,\Delta\, h_{\alpha,\beta}(x,y) = 0.
    \end{equation*}
  \item
    \label{thm:GkGl-3}
    If \(\beta=[\si,\si+q]\) is an interval, then
    \begin{multline*}
      \quad
      (\AW\otimes\GG_{n-1})\,\Delta\, h_{\alpha,\beta}(x,y) =  (-1)^{n(\si+q+1)}\,\bigl(\restr{x}{0}{\si} \otimes \restr{y}{0}{q+1}\bigr) \\
      \otimes \GG_{n-1}\bigl(\restr{x}{\si}{\n-q}, s_{[0,p-\si]}\,\restr{y}{q+1}{\n-p}\bigr).
      \quad
    \end{multline*}
  \end{enumroman}
\end{lemma}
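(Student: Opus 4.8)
The plan is to reduce all three statements to computations with interval cut operations, using that normalized chains annihilate degenerate simplices. First I would expand the outer diagonal by the Alexander--Whitney coproduct,
\begin{equation*}
  \Delta\, h_{\alpha,\beta}(x,y) = \sum_{c=0}^{\n+1} \restr{w}{0}{c} \otimes \restr{w}{c}{\n+1},
  \qquad w = \bigl(s_{\beta}\, x,\; s_{p+q+1}\,s_{\alpha}\, y\bigr),
\end{equation*}
and use that a face of a product simplex is the product of the faces, so that $\restr{w}{0}{c} = \bigl(\restr{(s_{\beta}x)}{0}{c},\restr{(s_{p+q+1}s_{\alpha}y)}{0}{c}\bigr)$ and similarly for the back face. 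Rewriting these faces of $s_{\beta}x$ and $s_{p+q+1}s_{\alpha}y$ by the simplicial identities turns every summand into a product of partially degenerate $X$- and $Y$-simplices; applying $\GG_{k}$ (which is assembled from the interval cut operations $\AWu{u(\jj)}$ of~\eqref{eq:u-a-b}) or the Alexander--Whitney map $\AW=\GG_{1}$ then reduces to a further round of interval cuts, and the whole question becomes whether the resulting simplices are degenerate.

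For (i) and (ii) I would show termwise that every summand is degenerate, so that no cancellation is needed. The decisive structural fact is that $(\alpha,\beta)$ is a shuffle: at each index of $[0,p+q]$ at least one of $s_{\beta}x$ and $s_{p+q+1}s_{\alpha}y$ is degenerate, and the top degeneracy $s_{p+q+1}$ lies on the $Y$-component. In the pattern~\eqref{eq:u-a-b} the $Y$-labelled intervals must have positive length, so this overlapping degeneracy obstructs the non-degeneracy demanded by $\GG_{k}$: for (i), applying $\GG_{k}$ with $k\ge 2$ to a front face of $h_{\alpha,\beta}(x,y)$ forces one interval cut output to repeat a vertex, giving zero for every $k$ with $1<k<n$; for (ii), when $\beta$ decomposes into $r\ge 2$ intervals the gaps between its blocks create more degenerate $X$-blocks than the two operations $\AW$ (on the front) and $\GG_{n-1}$ (on the back) can absorb.

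For (iii), with $\beta=[\si,\si+q]$ a single interval, the same bookkeeping isolates one surviving configuration: the only cut for which $\AW$ on the front and $\GG_{n-1}$ on the back are simultaneously non-degenerate separates the block $[\si,\si+q]$, producing the front factor $\restr{x}{0}{\si}\otimes\restr{y}{0}{q+1}$ and leaving exactly the data $\bigl(\restr{x}{\si}{\n-q},\, s_{[0,p-\si]}\,\restr{y}{q+1}{\n-p}\bigr)$ to feed $\GG_{n-1}$. It then remains to collect the signs: those from the Koszul rule, from the Alexander--Whitney coproduct, from the sign convention of the interval cut operations, and from commuting face operators past $s_{\beta}$ and $s_{p+q+1}s_{\alpha}$. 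As a consistency check, note that $n(\si+q+1)\equiv (n-2)(\si+q+1)\pmod 2$, so the claimed factor $(-1)^{n(\si+q+1)}$ is exactly the Koszul sign for transposing the degree-$(n-2)$ operation $\GG_{n-1}$ past the front factor of dimension $\si+q+1$; this is what I would aim to reproduce.

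The main obstacle will be this sign computation in (iii) together with the degeneracy bookkeeping behind the vanishing in (i) and (ii): one must track precisely which vertex each interval cut repeats as face operators are pushed past $s_{\beta}$ and $s_{p+q+1}s_{\alpha}$, and confirm that the intermediate range $1<k<n$ (respectively a non-interval $\beta$) always produces such a repetition. This combinatorial core runs closely parallel to~\cite[Thm.~5.1, Lemma~5.4]{HessEtAl:2006}, but must here be carried out keeping track of all signs.
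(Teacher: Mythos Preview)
Your plan is correct and follows essentially the same route as the paper: expand $\Delta h_{\alpha,\beta}(x,y)$ via the Alexander--Whitney coproduct, then for each summand exploit the degeneracy pattern of $(s_{\beta}x,\,s_{p+q+1}s_{\alpha}y)$ against the positive-length requirement on the $Y$-intervals in $u(\jj)$ to kill all terms in~(i) and~(ii) and to isolate the single cut at $l=\si+q+1$ in~(iii), with the sign there being exactly the Koszul sign you identified. The paper's argument is precisely this termwise degeneracy bookkeeping (first interval of $\beta$ for~(i), the pair of intervals for~(ii), the unique interval for~(iii)); your outline matches it, and what remains is only to execute the case analysis and the sign check.
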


\begin{proof}
  Applying the diagonal to the \((\n+1)\)-simplex~\(z=h_{\alpha,\beta}(x,y)\) gives
  \begin{equation}
    \label{eq:p0}
    \Delta z = \sum_{l=0}^{\n+1} \restr{z}{0}{l}\otimes \restr{z}{l}{\n+1}.
  \end{equation}
  For each of the three claims we will look at each individual summand corresponding to some value~\(0\le l\le\n+1\).

  \ref{thm:GkGl-1}:
  We consider the term
  \begin{multline}
    \label{eq:p1}
    \qquad
    \pm\bigl((p_{X}\otimes p_{Y})^{\otimes k}\,\Psi_{\jj}(\restr{z}{0}{l})\bigl) \otimes \restr{z}{l}{\n+1} \\
    \in \bigl(C(X)\otimes C(Y)\bigr)^{\otimes k}\otimes C(X\times Y),
    \qquad
  \end{multline}
  appearing in~\((\GG_{k}\otimes 1)\,\Delta\,z\), where \(\jj\) is a decomposition of~\(k-1\).
  From the identity~\(\Psi_{\jj}=\AWu{u(\jj)}\) and the specific form of~\(u(\jj)\) given in~\eqref{eq:u-a-b}
  we know that all components in the \(k\)~factors lying in~\(C(Y)\) have degree at least~\(1\).

  Let \([i_{1},j_{1}]\) be the first interval of~\(\beta\).
  Then \(p_{Y}(z)\) is degenerate on~\([0,i_{1}]\).
  For~\(l\le i_{1}\) all components lying in the factors~\(C(Y)\) are therefore degenerate
  in~\eqref{eq:p1}, forcing \(G_{k}(\restr{z}{0}{l})=0\).

  \def\nn{l}
  In the case~\(l>i_{1}\), let \([\nn_{t},\nn_{t+1}]\) be the interval for the component in the first factor~\(C(Y)\)
  for a fixed interval cut
  \begin{equation}
    0 = \nn_{0} \le \nn_{1} \le \dots \le \nn_{3k-1} = l
  \end{equation}
  of the interval~\([0,l]\). As mentioned above, we can assume \(\nn_{t}<\nn_{t+1}\).
  In fact, we must have \(i_{1}=\nn_{t}<\nn_{t+1}\le j_{1}+1\) since \(p_{X}(z)\) is degenerate on~\([i_{1},j_{1}+1]\).
  But this implies that in~\eqref{eq:p1} the component in some factor~\(C(X)\) is
  of the form~\(p_{X}(z(J))\) with~\(\nn_{t}\),~\(\nn_{t+1}\in J\).
  Since \(p_{X}(z)\) is degenerate on~\([\nn_{t},\nn_{t+1}]\), this restriction~\(p_{X}(z(J))\) is degenerate.

  \ref{thm:GkGl-2}:
  By assumption, \(\beta\) has at least two intervals~\([i_{1},j_{1}]\) and~\([i_{2},j_{2}]\).

  If \(l>j_{1}+1\), then \(p_{X}(z)\) is degenerate on ~\([i_{1},j_{1}+1]\)
  while \(p_{Y}(z)\) is degenerate on~\([j_{1}+1,i_{2}]\). This implies that there is no way
  to divide \(\restr{z}{0}{l}\) into a front face~\(\restr{z}{0}{l'}\) and a back face~\(\restr{z}{l'}{l}\) such that both
  \(p_{X}(\restr{z}{0}{l'})\) and~\(p_{Y}(\restr{z}{l'}{l})\)
  are non-degenerate. Hence \(\AW(\restr{z}{0}{l})=0\).

  So assume \(l\le j_{1}+1\). Then \(p_{X}(z)\) is degenerate on~\([i_{2},j_{2}+1]\),
  and \(p_{Y}(z)\) is degenerate on~\([p+q+1,p+q+2]\) where \(j_{2}+1\le p+q+1\).
  Hence if \(n=2\), we get
  \begin{equation}
    G_{n-1}(\restr{z}{l}{\n+1}) = \AW(\restr{z}{l}{\n+1}) = 0
  \end{equation}
  by reasoning as before. If \(n\ge3\), then for
  \begin{equation}
    \label{eq:p2}
    G_{n-1}(\restr{z}{l}{\n+1}) \in \bigl(C(X)\otimes C(Y)\bigr)^{\otimes(n-1)}
  \end{equation}
  to be non-zero, some component in one of the tensor factors~\(C(Y)\)
  must be of the form~\(p_{Y}(\restr{z}{i_{2}}{j_{2}+1})\). But then some component in one of the tensor factors~\(C(X)\)
  is of the form~\(p_{X}(z(J))\) with~\(i_{2}\),~\(j_{2}+1\in J\).
  As before, this restricted simplex is degenerate
  because \(p_{X}(z)\) is degenerate on the interval~\([i_{2},j_{2}+1]\).

  \ref{thm:GkGl-3}:
  By assumption, \(p_{X}(z)\) is degenerate on~\([\si,\si+q+1]\), and
  so is \(p_{Y}(z)\) on \([\si+q+1,p+q+2]\). 
  Hence in~\eqref{eq:p0} we must have \(l\le \si+q+1\).
  Assume \(l<\si+q+1\) and set \(t=\max(l,\si)\). Then \(p_{X}(z)\) is degenerate on~\([t,\si+q+1]\) while
  \(p_{Y}(z)\) is degenerate on~\([l,t]\) 
  and on~\([\si+q+1,p+q+2]\). Hence for~\eqref{eq:p2}
  to be non-zero, the component in the first tensor factor~\(C(Y)\) has to be \(p_{Y}(\restr{z}{t}{\si+q+1})\).
  But then the component in some tensor factor~\(C(X)\) must be
  of the form~\(p_{X}(z(J))\) with~\(t\),~\(\si+q+1\in J\), hence degenerate.

  This shows that \(l=\si+q+1\) is the only value with a non-zero contribution.
  The claim now follows from the formula~\eqref{eq:def-psi} for~\(z=h_{\alpha,\beta}(x,y)\) and the identity
  \begin{equation}
    \AW(\restr{z}{0}{l}) = p_{X}(\restr{z}{0}{\si}) \otimes p_{Y}(\restr{z}{\si}{\si+q+1}).
  \end{equation}
  by applying the appropriate face operators.
\end{proof}

We finally show \eqref{eq:GGn-Gn}
by induction on~\(n\). The identity holds for~\(n=1\) since both
\begin{equation}
  \GG_{1} = (p_{X}\otimes p_{Y})\,\Psi_{1} = (p_{X}\otimes p_{Y})\,(E_{0}\otimes 1)\,\Delta^{(2)} = (p_{X}\otimes p_{Y})\,\Delta
\end{equation}
and~\(G_{1}\) are equal to the Alexander--Whitney map~\(\AW\).

Now assume the claim to hold for all~\(1\le k<n\), and consider \((x,y)\in X_{\n}\times Y_{\n}\) for some~\(m\ge0\).
By the definition~\eqref{eq:GM-rec-def} of~\(G_{n}\), induction and \Cref{thm:GkGl} we have
\begin{align}
  \MoveEqLeft{G_{n}(x,y) = -\sum_{l=1}^{n-1} (-1)^{n-l}\,(G_{l}\otimes G_{n-l})\,\Delta\,h(x,y)} \\
  \notag &= -\sum_{l=1}^{n-1} (-1)^{n-l}\,(\GG_{l}\otimes\GG_{n-l})\,\Delta\,h(x,y) \\
  \notag &= \!\! \sum_{\substack{0\le p+q < \n\\(\alpha,\beta)\vdash(p,q+1)}} \!\!\!\! (-1)^{p+q+(\alpha,\beta)}\,
    (\AW\otimes\GG_{n-1})\,\Delta\,h_{\alpha,\beta}\bigl(\partial_{p+1}^{p+q}\,x,\partial_{0}^{p-1}\,y\bigr),
\end{align}
where the last sum only extends over partitions~\((\alpha,\beta)\) where \(\beta=(\si,\dots,\si+q)\) is an interval for some~\(0\le \si\le p\).

Let us look at an individual term
\begin{multline}
  \label{eq:g-G}
  \qquad
  (-1)^{n+p+q+(\alpha,\beta)}\,
  (\AW\otimes\GG_{n-1})\,\Delta\, h_{\alpha,\beta}\bigl(\partial_{p+1}^{p+q}\,x,\partial_{0}^{p-1}\,y\bigr) \\
  = (-1)^{\epsilon}\,\bigl(\restr{x}{0}{\si} \otimes \restr{y}{p}{p+q+1}\bigr) \otimes \GG_{n-1}(\tilde{x},\tilde{y}).
  \qquad
\end{multline}
Here
\begin{align}
  \tilde{x} &= x(\si\dots p,p+q+1\dots \n), \\
  \tilde{y} &= s_{[0,p-\si]}\,y(p+q+1\dots \n) \\
  \shortintertext{are simplices of dimension~\(\n'=\n-\si-q+1\), and}
  \label{eq:delta-total}
  \epsilon &= p+q+(p-\si)(q+1) + n(\si+q).
\end{align}
Consider a fixed decomposition~\(\jj'=(j'_{1},\dots,j'_{n-1})\) of~\(n-2\) appearing in
\begin{equation}
  \label{eq:GG-n1}
  \GG_{n-1}(\tilde{x},\tilde{y}) = (-1)^{n-2}\,(p_{X}\otimes p_{Y})^{\otimes(n-1)}\sum_{\jj'}\Psi_{\jj'}(\tilde{x},\tilde{y})
\end{equation}
and a fixed cut
\def\nn{m}
\begin{equation}
  0=\nn'_{0}\le \nn'_{1}\le \dots\le \nn'_{3(n-1)-1} = m'
\end{equation}
of~\([0,\n-\si-q+1]\) into \(2(n-1)\)~intervals. These choices lead to a tensor product of simplices
\begin{equation}
  \bigl(\sweehgc{a}{1} \otimes \sweehgc{b}{1}\bigr) \otimes \dots \otimes \bigl(\sweehgc{a}{n-1} \otimes \sweehgc{b}{n-1}\bigr) \in (C(X)\otimes C(Y))^{\otimes(n-1)}.
\end{equation}
The effect of the degeneracy operator~\(s_{[0,p-\si]}\) in the formula for~\(\tilde{y}\) is
that in order to obtain a non-degenerate simplex~\(\sweehgc{b}{1}\), the simplex~\(\tilde{y}\) cannot be restricted to an interval
that overlaps with the interval~\([0,p-\si+1]\). Hence this latter interval is completely covered
by intervals corresponding to terms~\(\sweehgc{a}{t}\) or, in other words, intervals with odd labels~\(v_{t}\)
from the surjection~\(u(\jj')\) given by~\eqref{eq:u-a-b}.

Suppose that the interval containing \([p-\si,p-\si+1]\) has label~\(v_{t}=2t-1\) in~\(u(\jj')\).
This implies \(j'_{1}=\dots=j'_{t-1}=0\) because the corresponding odd labels~\(<2t-1\) cannot enclose any even label. Moreover,
\begin{equation}
  \label{eq:a-b-jj}
  \bigl(\restr{x}{0}{\si} \otimes \restr{y}{p}{p+q+1}\bigr) \otimes
  \bigl(\sweehgc{a}{1} \otimes \sweehgc{b}{1}\bigr) \otimes \dots \otimes \bigl(\sweehgc{a}{n-1} \otimes \sweehgc{b}{n-1}\bigr)
\end{equation}
is a term appearing in~\((p_{X}\otimes p_{Y})^{\otimes n}\,\Psi_{\jj}(x,y)\), hence in~\(\GG_{n}(x,y)\), for
\begin{equation}
  \jj = (\underbrace{0,\dots,0}_{\text{\(t\)~times}},j'_{t}+1,j'_{t+1},\dots,j'_{n-1}),
\end{equation}
namely for the interval cut
\begin{multline}
  0=\nn_{0}\le \nn_{1}=\si\le \nn_{2} = \nn'_{1}+\si \le \dots \le \nn_{t} = \nn'_{t-1}+\si \le \nn_{t+1} = p \\
  \le \nn_{t+2} = p+q+1 \le \nn_{t+3} = \nn'_{t} \le \dots \le \nn_{3n-1} = \nn'_{3(n-1)-1} = m'.
\end{multline}
Conversely, by reversing this process we see that for fixed~\(p\),~\(q\) and~\(\si\) all terms~\eqref{eq:a-b-jj} arise this way
from some~\(\jj'\) and some interval cut of~\([0,m']\).

Let us compare the signs. In addition to~\eqref{eq:GG-n1} we have
\begin{equation}
  \label{eq:comp-Psi}
  \GG_{n} = (-1)^{n-1}\,(p_{X}\otimes p_{Y})^{\otimes n}\sum_{\jj} \Psi_{\jj}.
\end{equation}
We again focus on fixed values~\(p\),~\(q\) and~\(\si\) and a fixed decomposition~\(\jj'\) of~\(n-2\) together with a fixed interval cut,
giving an interval cut for a decomposition~\(\jj\) of~\(n-1\) as before.

The simplex~\(\sweehgc{b}{1}\), which is the first component in~\(C(Y)\), does not show up in~\(\jj'\). In~\(\jj\), it produces a permutation sign
since the corresponding (final) interval has to be moved past all intervals between vertices~\(\si\) and~\(p\).
These intervals are all final except for the last one since the two intervals enclosing the one labelled~\(w_{1}=2\) have the same (odd) label.
Hence the change in the permutation sign exponent is
\begin{equation}
  \label{eq:delta-perm}
  \Delta\epsilon_{\mathrm{perm}} = (p-\si+1)(q+1) = (p-\si)(q+1)+q+1.
\end{equation}
As far as changes in the positional signs are concerned, there are two kinds of contributions.
Recall that only inner intervals contribute to the positional sign.
There are \(n-1\)~inner intervals in~\(\jj\) and \(n-2\) in~\(\jj'\).
The first inner interval in~\(\jj\) comes right before the interval with labelled~\(w_{1}=2\) (hence ending at vertex~\(p\)),
and it disappears when passing to~\(\jj'\). The others bijectively correspond to the inner intervals in~\(\jj'\),
but they are shifted \(\si+q\)~positions to the left. The reason is that the vertex~\(p+q+1\) in~\((x,y)\)
corresponds to the vertex~\(p-\si+1\) in~\((\tilde{x},\tilde{y})\), and these inner intervals appear past these vertices.
Hence the change in the positional sign exponent is
\begin{equation}
  \label{eq:delta-pos}
  \Delta\epsilon_{\mathrm{pos}} = p + (\si+q)(n-2) \equiv p + n(\si+q) \pmod{2}.
\end{equation}
Adding up the sign changes (including the one between~\eqref{eq:GG-n1} and~\eqref{eq:comp-Psi}), we get
\begin{equation}
  \epsilon = \Delta\epsilon_{\mathrm{perm}} + \Delta\epsilon_{\mathrm{pos}} - 1
\end{equation}
Hence
\begin{equation}
  G_{n}(x,y) = (-1)^{n}\,(\AW \otimes\GG_{n-1})\,\Delta\,h(x,y) = \GG_{n}(x,y),
\end{equation}
which completes the proof of \Cref{thm:Phihga-PhiGM}.

\section{Comparison of the two products on~\texorpdfstring{\(\Tor\)}{Tor}}
\label{sec:compare-prod}

As discussed in \Cref{rem:product-E2}, the \(\Ainf\)-structure on the two-sided bar construction
induces, under certain assumptions,
the canonical product on the second page of the associated Eilenberg--Moore spectral sequence.
The question remains whether the new product on the differential torsion product agrees with
the ``original'' one defined previously by Smith~\cite{Smith:1967}
by dualizing the construction of Eilenberg--Moore~\cite[Sec.~18]{EilenbergMoore:1966}.
This is necessarily the case in the situation of the Eilenberg--Moore \Cref{thm:em-ainf} since
for both products we have a multiplicative isomorphism with~\(H^{*}(\gE)\).

Let us assume that \(\kk\) is a principal ideal domain.
Recall from~\cite[Prop.~I.3.4]{Smith:1967} or~\cite[Prop.~7.17]{McCleary:2001}
that the original product is defined via the maps\footnote{%
  The map~\(\Tor_{j}(j,j)\) is missing in~\cite{Smith:1967}.}
\begin{equation}
  \label{eq:def-prod-Tor-orig}
  \begin{tikzcd}[row sep=scriptsize]
    \Tor_{C^{*}(B)}\bigl(C^{*}(X),C^{*}(E)\bigr) \otimes \Tor_{C^{*}(B)}\bigl(C^{*}(X),C^{*}(E)\bigr) \arrow{d}{\shuffle} \\
    \Tor_{C^{*}(B)\otimes C^{*}(B)}\bigl(C^{*}(X)\otimes C^{*}(X),C^{*}(E)\otimes C^{*}(E)\bigr)
      \arrow{d}{\Tor_{j}(j,j)} \\
    \Tor_{(C(B)\otimes C(B))^{\vee}}\bigl((C(X)\otimes C(X))^{\vee},(C(E)\otimes C(E))^{\vee}\bigr) \\
    \Tor_{C^{*}(B\times B)}\bigl(C^{*}(X\times X),C^{*}(E\times E)\bigr)
      \arrow{u}[right]{\Tor_{\shuffle^{*}}(\shuffle^{*},\shuffle^{*})}[left]{\cong} \arrow{d}{\Tor_{\Delta^{*}}(\Delta^{*},\Delta^{*})} \\
    \Tor_{C^{*}(B)}\bigl(C^{*}(X),C^{*}(E)\bigr) \mathrlap{.}
  \end{tikzcd}
\end{equation}
The top arrow is the shuffle map~\eqref{eq:def-shuffle-twosided} in cohomology.
The map~\(\shuffle^{*}\) appearing in the third arrow is induced by the transpose of the shuffle map~\eqref{eq:ez-contraction}
(with~\(X=Y\), and analogously for~\(B\) and~\(E\)).
This shuffle map is a quasi-isomorphism of dgcs and \(\shuffle^{*}\) therefore one of dgas,
which implies that \(\Tor_{\shuffle^{*}}(\shuffle^{*},\shuffle^{*})\) is an isomorphism, see~\cite[Cor.~1.8, proof of Cor.~3.5]{GugenheimMay:1974}.
Moreover, \(j\) denotes the canonical chain map
\begin{equation}
  C^{*}(X)\otimes C^{*}(X) \to \bigl(C(X)\otimes C(X)\bigr)^{\vee}.
\end{equation}

\begin{theorem}
  \label{thm:comparison-products}
  Let \(B\),~\(X\) and~\(E\) be simplicial sets, and let \(\kk\) be a principal ideal domain.
  The product~\eqref{eq:def-prod-Tor-orig} on~\(\Tor_{C^{*}(B)}\bigl(C^{*}(X),C^{*}(E)\)
  agrees with the one induced by the \(\Ainf\)-structure on \(\BB(C^{*}(X),C^{*}(B),C^{*}(E)\)
  defined in \Cref{sec:ainf-twosided} or, in other words, by the Carlson--Franz product~\(m_{2}\).
\end{theorem}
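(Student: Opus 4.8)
The plan is to write both products on $\Tor_{C^{*}(B)}(C^{*}(X),C^{*}(E))$ as the same chain of elementary maps, so that the comparison reduces to a single identification of two maps on differential torsion products, which is then settled by the Gugenheim--Munkholm inversion identity.

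First I would recall from \Cref{rem:m2-hga} that the Carlson--Franz product is $m_{2}=\BB(\Phihga,\Phihga,\Phihga)\,\shuffle$. Hence the product it induces on $\Tor$ is the composite of the shuffle map~\eqref{eq:def-shuffle-twosided} in cohomology --- which is precisely the top arrow of~\eqref{eq:def-prod-Tor-orig} --- with the map on torsion products induced by the three $\Ainf$-morphisms $\Phihga$. By \Cref{thm:Phihga-PhiGM} we may replace $\Phihga$ by $\PhiGM$, and by the definition~\eqref{eq:def-PhiGM} the latter factors as an $\Ainf$-composite $\PhiGM=\Delta^{*}\circ G_{X,X}^{*}\circ j$ of the strict dga map $j$, the genuine $\Ainf$-algebra map $G_{X,X}^{*}$, and the strict dga map $\Delta^{*}$.

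Next I would invoke the functoriality of the maps~\eqref{eq:ainf-twosidedbar} between two-sided bar constructions (and of the induced maps on torsion products) to split
\[
  \BB(\PhiGM,\PhiGM,\PhiGM)=\BB(\Delta^{*},\Delta^{*},\Delta^{*})\circ\BB(G_{X,X}^{*},G_{B,B}^{*},G_{E,E}^{*})\circ\BB(j,j,j).
\]
Each intermediate two-sided bar construction computes the relevant torsion product because all the complexes in sight are torsion-free over the principal ideal domain~$\kk$. On cohomology this exhibits the Carlson--Franz product as the composite of the shuffle map, $\Tor_{j}(j,j)$, $\Tor_{G^{*}}(G^{*},G^{*})$ and $\Tor_{\Delta^{*}}(\Delta^{*},\Delta^{*})$. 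Comparing with~\eqref{eq:def-prod-Tor-orig}, this coincides with the original product except that the arrow $\Tor_{\shuffle^{*}}(\shuffle^{*},\shuffle^{*})^{-1}$ is replaced by $\Tor_{G^{*}}(G^{*},G^{*})$.

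It therefore remains to show that these two maps agree, and this is where the only genuinely new input enters. Transposing the Gugenheim--Munkholm identity~\eqref{eq:f-G-id} and using the compatibility between the cobar construction, duals and the bar functor from \Cref{sec:cobar}, I would obtain $\shuffle^{*}\circ G_{X,X}^{*}=1$ as an $\Ainf$-endomorphism of $(C(X)\otimes C(X))^{\vee}$, and likewise for $B$ and~$E$. Functoriality of $\Tor$ then yields $\Tor_{\shuffle^{*}}(\shuffle^{*},\shuffle^{*})\circ\Tor_{G^{*}}(G^{*},G^{*})=\Tor_{1}(1,1)=\id$; since $\Tor_{\shuffle^{*}}(\shuffle^{*},\shuffle^{*})$ is an isomorphism, $\Tor_{G^{*}}(G^{*},G^{*})$ is its inverse, giving the required identification. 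The hard part is already encapsulated in \Cref{thm:Phihga-PhiGM}; beyond invoking it, the points demanding care are the bookkeeping of composition orders and signs when transposing~\eqref{eq:f-G-id} (the $\Ainf$-coalgebra composition in the cobar picture being contravariant to the dualized algebra composition), and checking that all four intermediate bar constructions genuinely compute torsion products so that the functoriality of~\eqref{eq:ainf-twosidedbar} descends compatibly to $\Tor$.
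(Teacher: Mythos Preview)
Your proposal is correct and follows essentially the same route as the paper: both factor $m_{2}$ via \Cref{rem:m2-hga}, replace $\Phihga$ by $\PhiGM$ using \Cref{thm:Phihga-PhiGM}, decompose $\PhiGM=\Delta^{*}\circ G^{*}\circ j$, and then identify $\Tor_{G^{*}}(G^{*},G^{*})$ with $\Tor_{\shuffle^{*}}(\shuffle^{*},\shuffle^{*})^{-1}$ by transposing the Gugenheim--Munkholm identity~\eqref{eq:f-G-id}. The paper's write-up is a bit terser, but the argument is the same.
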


As mentioned in the introduction, this result has independently been obtained by Carlson~\cite{Carlson:product}.

\begin{proof}
  We assume \(\kk\) to be a principal ideal domain only to ensure that the cohomology
  of the two-sided bar construction is indeed the stated torsion product.

  As before, \(G\) denotes the Gugenheim--Munkholm shc map for the Eilenberg--Zilber contraction.
  From the transpose
  \begin{equation}
    \shuffle^{*} \circ G^{*} = 1 
  \end{equation}
  of the identity~\eqref{eq:f-G-id} (with~\(f=\shuffle\)) we deduce
  \begin{equation}
    \Tor_{\shuffle^{*}}(\shuffle^{*},\shuffle^{*})^{-1} = \Tor_{G^{*}}(G^{*},G^{*}). 
  \end{equation}
  Together with the definition of~\(m_{2}\) recalled in \Cref{rem:m2-hga},
  the definition~\eqref{eq:def-PhiGM} of~\(\PhiGM\) and \Cref{thm:Phihga-PhiGM} 
  we obtain for the Carlson--Franz multiplication on the differential torsion product
  \begin{align}
    H^{*}(m_{2}) &= \Tor_{\Phihga}(\Phihga,\Phihga)\,\shuffle
    = \Tor_{\PhiGM}(\PhiGM,\PhiGM)\,\shuffle \\
    \notag &= \Tor_{\Delta^{*}}(\Delta^{*},\Delta^{*})\,\Tor_{G^{*}}(G^{*},G^{*})\,\Tor_{j}(j,j)\,\shuffle \\
    \notag &= \Tor_{\Delta^{*}}(\Delta^{*},\Delta^{*})\,\Tor_{\shuffle^{*}}(\shuffle^{*},\shuffle^{*})^{-1}\Tor_{j}(j,j)\,\shuffle,
  \end{align}
  which is the composition displayed in~\eqref{eq:def-prod-Tor-orig}.
\end{proof}


\begin{thebibliography}{99}

\bibitem{BarthelMayRiehl:2014}
T.~Barthel, J.~P.~May, E.~Riehl,
\newblock Six model structures for DG-modules over DGAs: model category theory in homological action,
\newblock \textit{New York J.\ Math.}~\textbf{20} (2014), 1077--1159;
\newblock \url{http://nyjm.albany.edu/j/2014/20_1077.html}

\bibitem{BergerFresse:2004}
C.~Berger, B.~Fresse,
\newblock Combinatorial operad actions on cochains,
\newblock \textit{Math.\ Proc.\ Camb.\ Philos.\ Soc.}~\textbf{137} (2004), 135--174;
\newblock \doi{10.1017/S0305004103007138}

\bibitem{Carlson:biquot}
J.~D.~Carlson,
\newblock The cohomology of biquotients via a product on the two-sided bar construction,
\newblock \arxiv{2106.02986v1}, to appear in \textit{Homology Homotopy Appl.}

\bibitem{Carlson:ring}
J.~D.~Carlson,
\newblock A ring structure on $\Tor$,
\newblock \arxiv{2306.04860}

\bibitem{Carlson:product}
J.~D.~Carlson,
\newblock Products on $\Tor$,
\newblock \arxiv{2311.16007}

\bibitem{CarlsonFranz}
J.~D.~Carlson, M.~Franz,
\newblock A product on the two-sided bar construction,
\newblock appendix to \cite{Carlson:biquot}

\bibitem{Dold:1980}
A.~Dold,
\newblock \textit{Lectures on algebraic topology},
\newblock 2nd~ed., 
\newblock Springer, Berlin 1980

\bibitem{EilenbergMacLane:1954}
S.~Eilenberg S.~Mac\,Lane,
\newblock On the groups~$H(\Pi,n)$,~II,
\newblock \textit{Ann.\ Math.}~\textbf{60} (1954), 49--139;
\newblock \doi{10.2307/1969702}

\bibitem{EilenbergMoore:1966}
S.~Eilenberg, J.~C.~Moore,
\newblock Homology and fibrations~I: Coalgebras, cotensor product and its derived functors,
\newblock \textit{Comment.\ Math.\ Helv.}~\textbf{40} (1966), 199--236;
\newblock \doi{10.1007/BF02564371}

\bibitem{Franz:hgashc}
M.~Franz,
\newblock Homotopy Gerstenhaber algebras are strongly homotopy commutative,
\newblock \textit{J.\ Homotopy Relat.\ Struct.}~\textbf{15} (2020), 557--595;
\newblock \doi{10.1007/s40062-020-00268-y}

\bibitem{Franz:szczarba1}
M.~Franz,
\newblock Szczarba's twisting cochain and the Eilenberg--Zilber maps,
\newblock \textit{Collect.\ Math.}~\textbf{72} (2021), 569--586 (2021);
\newblock \doi{10.1007/s13348-020-00299-x}

\bibitem{Franz:homog}
M.~Franz,
\newblock The cohomology rings of homogeneous spaces,
\newblock \textit{J.\ Topol.}~\textbf{14} (2021), 1396--1447;
\newblock \doi{10.1112/topo.12213}

\bibitem{Franz:gersten}
M.~Franz,
\newblock Homotopy Gerstenhaber formality of Davis--Januszkiewicz spaces,
\newblock \textit{Homology Homotopy Appl.}~\textbf{23} (2021), 325--347;
\newblock \doi{10.4310/HHA.2021.v23.n2.a17}

\bibitem{Franz:szczarba2}
M.~Franz,
\newblock Szczarba's twisting cochain is comultiplicative,
\newblock \arxiv{2008.08943}, to appear in \textit{Homology Homotopy Appl.}

\bibitem{Fresse:2011}
B.~Fresse,
\newblock Iterated bar complexes of $E$-infinity algebras and homology theories,
\newblock \textit{Algebr.\ Geom.\ Topol.}~\textbf{11} (2011), 747--838;
\newblock \doi{10.2140/agt.2011.11.747}

\bibitem{GerstenhaberVoronov:1995}
M.~Gerstenhaber, A.~A.~Voronov,
\newblock Homotopy G-algebras and moduli space operad,
\newblock \textit{Internat.\ Math.\ Res.\ Notices}~\textbf{1995} (1995), 141--153;
\newblock \doi{10.1155/S1073792895000110}

\bibitem{Gugenheim:1972}
V.~K.~A.~M.~Gugenheim,
\newblock On the chain-complex of a fibration,
\newblock \textit{Illinois J.\ Math.}~\textbf{16} (1972), 398--414
\newblock \doi{10.1215/ijm/1256065766}

\bibitem{GugenheimMay:1974}
V.~K.~A.~M.~Gugenheim, J.~P.~May,
\newblock On the theory and applications of differential torsion products,
\newblock \textit{Mem.\ Am.\ Math.\ Soc.}~\textbf{142} (1974);
\newblock \doi{10.1090/memo/0142}

\bibitem{GugenheimMunkholm:1974}
V.~K.~A.~M.~Gugenheim, H.~J.~Munkholm,
\newblock On the extended functoriality of $\operatorname{Tor}$~and~$\operatorname{Cotor}$,
\newblock \textit{J.~Pure Appl.\ Algebra}~\textbf{4} (1974), 9--29;
\newblock \doi{10.1016/0022-4049(74)90026-7}

\bibitem{HessEtAl:2006}
K.~Hess, P.-E.~Parent, J.~Scott, A.~Tonks,
\newblock A canonical enriched Adams--Hilton model for simplicial sets,
\newblock \textit{Adv.\ Math.}~\textbf{207} (2006), 847--875;
\newblock \doi{10.1016/j.aim.2006.01.013}

\bibitem{KadeishviliSaneblidze:2005}
T.~Kadeishvili, S.~Saneblidze,
\newblock A cubical model for a fibration,
\newblock \textit{J.~Pure Appl.\ Algebra}~\textbf{196} (2005), 203--228;
\newblock \doi{10.1016/j.jpaa.2004.08.017};

\bibitem{Keller:2001}
B.~Keller,
\newblock Introduction to $A$-infinity algebras and modules.
\newblock \textit{Homology Homotopy Appl.}~\textbf{3} (2001), 1--35;
\newblock \doi{10.4310/hha.2001.v3.n1.a1}

\bibitem{Markl:2006}
M.~Markl,
\newblock Transferring $\Ainf$ (strongly homotopy associative) structures,
\newblock pp.~139--151 in:
\newblock M.~Čadek (ed.), Proc.\ 25th Winter School ``Geometry and Physics'',
\newblock \textit{Rend.\ Circ.\ Mat.\ Palermo}, Ser.~II, Suppl.\ \textbf{79} (2006);
\newblock available at \url{http://dml.cz/dmlcz/701773}

\bibitem{McCleary:2001}
J.~McCleary.
\newblock \textit{A user's guide to spectral sequences}, 2nd ed.,
\newblock Cambridge Univ.\ Press, Cambridge 2001

\bibitem{Munkholm:1974}
H.~J.~Munkholm,
\newblock The Eilenberg--Moore spectral sequence and strongly homotopy multiplicative maps,
\newblock \textit{J.~Pure Appl.\ Algebra}~\textbf{5} (1974), 1--50;
\newblock \doi{10.1016/0022-4049(74)90002-4}

\bibitem{Rubio:1991}
J.~Rubio García,
\newblock \textit{Homologie effective des espaces de lacets itérés~: un logiciel},
\newblock doctoral dissertation, Univ.\ Joseph Fourier, Grenoble 1991;
\newblock available at \url{http://investigacion.unirioja.es/documentos/5c13b144c8914b6ed37762a8}

\bibitem{Saneblidze:2009}
S.~Saneblidze,
\newblock The bitwisted Cartesian model for the free loop fibration,
\newblock \textit{Topology Appl.}~\textbf{156} (2009), 897--910;
\newblock \doi{10.1016/j.topol.2008.11.002}

\bibitem{Serre:1951}
J.-P.~Serre,
\newblock Homologie singulière des espaces fibrés. Applications,
\newblock \textit{Ann.\ Math.}~\textbf{54} (1951), 425--505;
\newblock \doi{10.2307/1969485}

\bibitem{Smith:1967}
L.~Smith,
\newblock Homological algebra and the Eilenberg--Moore spectral sequence,
\newblock \textit{Trans.\ Amer.\ Math.\ Soc.}~\textbf{129} (1967), 58--93;
\newblock \doi{10.2307/1994364}

\bibitem{Wolf:1977}
J.~Wolf,
\newblock The cohomology of homogeneous spaces,
\newblock \textit{Amer.\ J.\ Math.}~\textbf{99} (1977), 312--340;
\newblock \doi{10.2307/2373822}

\end{thebibliography}
\end{document}